\newtheorem{theorem}{Theorem}[section]
\newtheorem{lemma}[theorem]{Lemma}
\newtheorem{proposition}[theorem]{Proposition}
\newtheorem{corollary}[theorem]{Corollary}
\newtheorem{claim}[theorem]{Claim}
\theoremstyle{definition}
\newtheorem{definition}[theorem]{Definition}
\newtheorem{example}[theorem]{Example}
\newtheorem{remark}[theorem]{Remark}
\newtheorem{general remarks}[theorem]{General remarks}
\newtheorem{note}[theorem]{Note}
\newcommand{\Id}{\operatorname{Id}}
\newcommand{\ben}{\begin{enumerate}}
\newcommand{\een}{\end{enumerate}}
\begin{document}

\title{Semisimple algebras and $PI$-invariants of finite dimensional algebras}

\author{Eli Aljadeff,Yakov Karasik}

\address{Department of Mathematics, Technion - Israel Institute of Technology,
Haifa 32000, Israel}

\email{aljadeff 'at' technion.ac.il (E. Aljadeff),
yaakov 'at' technion.ac.il (Y. Karasik)}

\keywords{T-ideal, polynomial identities, semisimple, full algebras, graded algebras.}
\begin{abstract}
Let $\Gamma$ be a $T$-ideal of identities of an affine PI-algebra over an algebraically closed field $F$ of characteristic zero. Consider the family $\mathcal{M}_{\Gamma}$ of finite dimensional algebras $\Sigma$ with $Id(\Sigma) = \Gamma$. By Kemer's theory it is known that such $\Sigma$ exists. We show there exists a semisimple algebra $U$ which satisfies the following conditions. $(1)$ There exists an algebra $A \in \mathcal{M}_{\Gamma}$ with Wedderburn-Malcev decomposition $A \cong U \oplus J_{A}$, where $J_{A}$ is the Jacobson's radical of $A$ $(2)$ If $B \in \mathcal{M}_{\Gamma}$ and $B \cong B_{ss} \oplus J_{B}$ is its Wedderburn-Malcev decomposition then $U$ is a direct summand of $B_{ss}$. We refer to $U$ as the \textit{unique minimal semisimple algebra} corresponding to $\Gamma$. More generally, if $\Gamma$ is a $T$-ideal of identities of a PI algebra and $\mathcal{M}_{\mathbb{Z}_{2}, \Gamma}$ is the family of finite dimensional super algebras $\Sigma$ with $Id(E(\Sigma)) = \Gamma$. Here $E$ is the unital infinite dimensional Grassmann algebra and $E(\Sigma)$ is the Grassmann envelope of $\Sigma$. Again, by Kemer's theory it is known that such $\Sigma$ exists. Then there exists a semisimple super algebra $U$ with the following properties. $(1)$ There exists an algebra $A \in \mathcal{M}_{\mathbb{Z}_{2}, \Gamma}$ with Wedderburn-Malcev decomposition as super algebras $A \cong U \oplus J_{A}$, where $J_{A}$ is the Jacobson's radical of $A$ $(2)$ If $B \in \mathcal{M}_{\mathbb{Z}_{2}, \Gamma}$ and $B \cong B_{ss} \oplus J_{B}$ is its Wedderburn-Malcev decomposition as super algebras, then $U$ is a direct summand of $B_{ss}$ as super algebras. Finally, we fully extend these results to the $G$-graded setting where $G$ is a finite group. In particular we show that if $A$ and $B$ are finite dimensional $G_{2}:= \mathbb{Z}_{2} \times G$-graded simple algebras then they are $G_{2}$-graded isomorphic if and only if $E(A)$ and $E(B)$ are $G$-graded PI-equivalent.

\end{abstract}

\maketitle

\section{introduction}

Let $F$ be an algebraically closed field of characteristic zero and $F\langle X \rangle$ be the free associative algebra over $F$ on a countable set of variables $X$. Let $\Gamma$ be a $T$-ideal of $F\langle X \rangle$ (i.e. invariant under all algebra endomorphisms of $F\langle X \rangle$). It is easy to see that in fact $\Gamma$ is the ideal of polynomial identities of a suitable associative algebra (e.g. $\Gamma = Id(F\langle X \rangle/\Gamma)$). Kemer's representability theorem says that if $\Gamma\neq 0$, then it is the $T$-ideal of identities of an algebra of the form $E(B)$, the Grassmann envelope of some finite dimensional $\mathbb{Z}_{2}$-graded algebra $B = B_{0} \oplus B_{1}$. Here $E = E_{0} \oplus E_{1}$ is the infinite dimensional unital Grassmann algebra with the natural $\mathbb{Z}_{2}$-grading and $E(B) = E_{0}\otimes B_{0} \oplus E_{1}\otimes B_{1}$ viewed as an ungraded algebra.  In case $\Gamma$ is the $T$-ideal of identities of an affine PI algebra, or equivalently, in case $\Gamma$ contains a nontrivial Capelli polynomial, Kemer's representability theorem says that $\Gamma = Id(A)$ where $A$ is some finite dimensional algebra. Kemer's representability theorem is the key step towards the positive solution of the Specht problem which claims that every $T$-ideal is finitely based.

The purpose of this paper is to prove roughly, that if $A$ is a finite dimensional algebra over an algebraically closed field of characteristic zero $F$, then the maximal semisimple subalgebra of $A$, namely a supplement $A_{ss}$ of the Jacobson's radical $J_{A}$ in $A$, is ``basically uniquely determined'' by $Id(A)$. We show also that a similar result holds for the algebra $E(B)$, that is $\mathbb{Z}_{2}$-graded semisimple supplement of $J_{B}$ in a finite dimensional super algebra $B$ is basically uniquely determined by $\Gamma = Id(E(B))$. Before we state the results precisely, we should remark right away that strictly speaking the semisimple part of a finite dimensional algebra cannot be determined by its $T$-ideal of identities for the simple reason that e.g., $Id(A) = Id(A \oplus A)$. So, by ``basically uniquely determined'' we mean the following.

\begin{theorem}\label{uniquely determined affine}
Let $\Gamma$ be a $T$-ideal of identities and suppose $\Gamma$ contains a Capelli polynomial $c_{n}$, some $n$. Then there exists a finite dimensional semisimple $F$-algebra $U$ that satisfies the following conditions.

\begin{enumerate}

\item
There exists a finite dimensional algebra $A$ over $F$ with $Id(A) = \Gamma$ and such that $A \cong U \oplus J_{A}$ is its Wedderburn-Malcev decomposition.

\item

If $B$ is any finite dimensional algebra over $F$ with $Id(B) = \Gamma$ and $B_{ss}$ is its maximal semisimple subalgebra, then $U$ is a direct summand of $B_{ss}$.

\end{enumerate}
\end{theorem}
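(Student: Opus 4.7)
The plan is to define $U$ by a minimality argument among all finite-dimensional representatives of $\Gamma$, and then use Kemer-type alternating polynomials to force the embedding of $U$ into $B_{ss}$ for any other representative $B$. Since $\Gamma$ contains a Capelli polynomial, Kemer's representability theorem produces a finite-dimensional algebra $A$ with $Id(A) = \Gamma$. Among all such representatives I choose one in which $\dim_F A_{ss}$ is minimal and set $U := A_{ss}$. This immediately supplies the algebra required for part $(1)$; the intrinsic nature of $U$ up to isomorphism will be a byproduct of part $(2)$.

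For part $(2)$, write $U \cong \bigoplus_{i=1}^{r} \Mat_{m_i}(F)^{\oplus \mu_i}$ with the pairs $(m_i,\mu_i)$ distinct, and let $B$ be an arbitrary finite-dimensional algebra with $Id(B) = \Gamma$. I need to show that for each $i$, $B_{ss}$ contains at least $\mu_i$ copies of $\Mat_{m_i}(F)$. The tool is a Kemer-type separating polynomial $f_i$: multilinear, alternating in $\mu_i$ disjoint sets of $m_i^{2}$ variables, nonzero on $A$ via an evaluation that uses the $\mu_i$ copies of $\Mat_{m_i}(F)$ in $A_{ss}$ (so each alternating block gets a basis of one copy), and constructed so as to vanish on every finite-dimensional algebra whose semisimple part lacks $\mu_i$ simple summands of dimension $\geq m_i^{2}$. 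Because $f_i$ is nonzero on $A$, we have $f_i \notin \Gamma$, and the hypothesis $Id(B) = \Gamma$ then gives a nonzero evaluation of $f_i$ on $B$; the alternating structure combined with the structure theory of $B_{ss}$ forces the required $\mu_i$ matrix summands of rank $m_i$ in $B_{ss}$. Running this argument for every $i$ and accounting for the pairwise distinctness of the $(m_i,\mu_i)$, one assembles the embeddings into a single direct-summand inclusion $U \hookrightarrow B_{ss}$ via a Wedderburn-type decomposition.

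The main obstacle is constructing the polynomials $f_i$ with simultaneous alternation in $\mu_i$ blocks of $m_i^{2}$ variables \emph{and} the sharp vanishing property on the "smaller" semisimple profiles. This is a refinement of Kemer's classical machinery, which is normally engineered only to detect the single dominant component controlling the Kemer index $(d,s)$; here one needs a multi-component version that isolates each Wedderburn block with its correct multiplicity. The minimality of $A$ is precisely what makes these polynomials available: if no such $f_i$ existed for some $i$, one could replace $A$ by a finite-dimensional algebra with strictly fewer copies of $\Mat_{m_i}(F)$ in its semisimple part while still satisfying $Id(\,\cdot\,)=\Gamma$, contradicting the minimal choice of $A$. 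A secondary technical point is to handle the interaction of the radical $J_A$ with the alternating evaluations, which may require a Shirshov-style reduction so that the alternating variables can be taken in $A_{ss}$ while the remaining variables absorb the radical; the presence of a Capelli identity in $\Gamma$ is what keeps the relevant radical nilpotency indices bounded and makes this reduction tractable.
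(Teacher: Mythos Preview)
There is a genuine gap in the polynomial construction, and it is not a technicality.

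Your polynomial $f_i$ alternates in $\mu_i$ disjoint sets of $m_i^{2}$ variables. Alternation of a single block of size $m_i^{2}$ forces that block, in any nonzero evaluation, onto $m_i^{2}$ linearly independent elements; but nothing in this structure forces \emph{different} blocks to land in \emph{different} simple summands of $B_{ss}$. If $B_{ss}$ has even one summand $M_n(F)$ with $n\ge m_i$, all $\mu_i$ blocks can be evaluated on that single summand. So the polynomial, as described, cannot detect the multiplicity $\mu_i$. The extreme case $m_i=1$ makes this vivid: alternation on a one-element set is vacuous, so your $f_i$ is just an arbitrary multilinear nonidentity and carries no information about how many copies of $F$ sit inside $B_{ss}$. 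Yet this is exactly the case of $\Gamma=Id(UT_q(F))$, where $U=F^{q}$ and one must show that any finite-dimensional $B$ with $Id(B)=\Gamma$ has at least $q$ commutative summands. Your scheme gives no handle on this. A second, separate problem: even granting your vanishing property, you conclude only that $B_{ss}$ has $\mu_i$ summands of dimension $\ge m_i^{2}$, not $\mu_i$ summands isomorphic to $M_{m_i}(F)$. A copy of $M_{m_i+1}(F)$ would pass your test but does not contain $M_{m_i}(F)$ as a direct summand, so you cannot assemble the embedding $U\hookrightarrow B_{ss}$. Finally, the minimality contrapositive (``if no such $f_i$ exists, replace $A$ by one with fewer copies of $M_{m_i}$'') is not a construction: the nonexistence of a single separating polynomial does not produce a single algebra $C$ with $Id(C)=\Gamma$ and smaller semisimple part.

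The paper's mechanism is quite different and avoids all three issues. It first passes from $A$ to a direct sum of \emph{full} algebras (algebras in which all simple components are linked through the radical), and works with the tuple $p_A=(\dim A_1,\ldots,\dim A_q)$ of a full algebra rather than with one matrix size at a time. The separating polynomial for a full algebra $A$ alternates, in each of many small sets, on $\dim A_{ss}=\sum_i \dim A_i$ variables \emph{simultaneously}, bordered by frame variables and connected by radical bridges. A nonzero evaluation on $B$ then forces some small set to receive a full basis of semisimple elements, and since these sit inside a single monomial the segments corresponding to the $A_i$'s must each land in a single simple component of $B$; linear independence then yields the combinatorial \emph{covering} relation $p_B$ covers $p_A$. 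This is the right invariant: it is antisymmetric, it sees all matrix sizes at once, and it immediately gives $A_{ss}\cong B_{ss}$ for PI-equivalent full algebras. The passage from full algebras back to the global $U$ is then handled by an explicit rewriting procedure (deletion, fusion, decomposition, absorption, merging) on presentations, rather than by a bare minimality argument.
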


Clearly, up to an algebra isomorphism, the semisimple algebra $U$ is unique minimal and is an invariant of $\Gamma$.

Let $A$ be a finite dimensional algebra over $F$ and let $A \cong A_{1} \times \cdots \times A_{q} \oplus J$ be its Wedderburn-Malcev decomposition where $A_{i}$ is simple, $i=1,\ldots,q$, and $J$ is the Jacobson radical.

\begin{definition}\label{full algebra}
We say $A$ is \textit{full} if up to a permutation of the simple components $A_{1}\cdot J \cdot A_{2} \cdots J \cdot A_{q} \neq 0$.

\end{definition}
The following special case of Theorem \ref{uniquely determined affine} will play a key role in the paper.

\begin{theorem}
If two full algebras $A$ and $B$ are PI-equivalent then their maximal semisimple subalgebras are isomorphic. In particular this holds in case $A$ and $B$ are fundamental algebras.
\end{theorem}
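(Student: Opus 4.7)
The plan is to reduce the statement directly to Theorem \ref{uniquely determined affine} by comparing dimensions of semisimple parts through the PI-exponent. Let $\Gamma = Id(A) = Id(B)$. Since $A$ (hence $B$) is finite dimensional, $\Gamma$ contains a Capelli polynomial, so Theorem \ref{uniquely determined affine} applies and produces a semisimple $F$-algebra $U$ together with an algebra $A_{0}$ with $Id(A_{0}) = \Gamma$ and Wedderburn--Malcev decomposition $A_{0} \cong U \oplus J_{A_{0}}$. Property (2) of that theorem gives
\[
\dim_{F} U \;\leq\; \dim_{F} A_{ss}, \qquad \dim_{F} U \;\leq\; \dim_{F} B_{ss},
\]
and moreover $U$ is a direct summand (as an algebra) of each of $A_{ss}, B_{ss}$. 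Thus it is enough to prove the reverse inequalities, since a semisimple algebra containing $U$ as a direct summand with matching dimension must equal $U$.

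For the reverse inequalities, I would invoke the Giambruno--Zaicev formula for the PI-exponent of a finite dimensional algebra $C$ with Wedderburn--Malcev decomposition $C \cong C_{1} \times \cdots \times C_{m} \oplus J_{C}$:
\[
\exp(C) \;=\; \max \bigl\{ \dim_{F}(C_{i_{1}} \oplus \cdots \oplus C_{i_{k}}) \;:\; C_{i_{1}}\, J_{C}\, C_{i_{2}}\, J_{C} \cdots J_{C}\, C_{i_{k}} \neq 0,\ \text{distinct } i_{j} \bigr\}.
\]
Because the exponent depends only on the $T$-ideal, $\exp(A) = \exp(B) = \exp(A_{0})$. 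The fullness hypothesis on $A$ means the maximum on the right is attained simultaneously by \emph{all} simple components of $A$, so $\exp(A) = \dim_{F} A_{ss}$; likewise $\exp(B) = \dim_{F} B_{ss}$. For $A_{0}$ only the trivial bound $\exp(A_{0}) \leq \dim_{F}(A_{0})_{ss} = \dim_{F} U$ is used. Combining:
\[
\dim_{F} A_{ss} \;=\; \exp(A) \;=\; \exp(A_{0}) \;\leq\; \dim_{F} U \;\leq\; \dim_{F} A_{ss},
\]
and so equality holds throughout, giving $A_{ss} = U$; the same chain with $B$ in place of $A$ shows $B_{ss} = U$, hence $A_{ss} \cong B_{ss}$.

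The fundamental case follows because a fundamental algebra is in particular full: the non-vanishing of the Kemer polynomial that defines fundamentality forces a nonzero product $A_{\sigma(1)} J A_{\sigma(2)} \cdots J A_{\sigma(q)}$ for some permutation $\sigma$, so Definition \ref{full algebra} is satisfied. The main step in this plan is really Theorem \ref{uniquely determined affine} itself, which is the black box producing $U$ and $A_{0}$; the present argument reduces the theorem to a single dimension count, and the only nontrivial external input is the Giambruno--Zaicev formula, which is available precisely in the finite dimensional characteristic zero setting we are in. Conceptually, the role of fullness is exactly to turn the inequality $\exp(A) \leq \dim_{F} A_{ss}$ into equality, which is what pins $A_{ss}$ down to $U$.
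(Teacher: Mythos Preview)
Your argument is correct, but it runs in the opposite logical direction from the paper and leans on much heavier machinery. In the paper, this theorem is obtained as an immediate corollary of the Main Lemma (Lemma~\ref{principal lemma}): if $A$ and $B$ are PI-equivalent full algebras, then neither can admit a strongly full polynomial vanishing on the other, so by the Main Lemma each of $p_A$, $p_B$ covers the other; antisymmetry of the covering relation then forces the tuples of dimensions of simple components to coincide, hence $A_{ss}\cong B_{ss}$. This is a two-line deduction once the Main Lemma is in place, and crucially it is established \emph{before} Theorem~\ref{uniquely determined affine}; the Main Lemma is in fact the engine driving the proof of Theorem~\ref{uniquely determined affine} itself.

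Your route instead treats Theorem~\ref{uniquely determined affine} as a black box and supplements it with the Giambruno--Zaicev exponent formula to pin down $\dim_F A_{ss}=\dim_F U$. The dimension count is clean and the use of fullness to turn $\exp(A)\le \dim_F A_{ss}$ into an equality is exactly right. What you should be aware of is the circularity risk: in the paper's development, the statement you are proving is described as playing ``a key role,'' and its content (via the Main Lemma) is one of the ingredients on the way to Theorem~\ref{uniquely determined affine}. So while your deduction is valid once Theorem~\ref{uniquely determined affine} is granted independently, within the paper's own architecture you are invoking the main theorem to recover one of its stepping stones. The paper's direct covering argument is both more elementary (no external exponent theory) and self-contained; your approach, on the other hand, makes a nice conceptual link to the PI-exponent that the paper does not spell out.
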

For \textit{fundamental} algebras this was proved in \cite{Procesi}. Fundamental algebras are special type of full algebras. They are important in Kemer's theory but will not play a role here (see \cite{AGPR}).

Let us turn now to the case where $\Gamma$ contains no Capelli polynomial. In that case we have the following result.

\begin{theorem}\label{uniquely determined nonaffine}
Let $\Gamma \leq F\langle X \rangle$ be a nonzero $T$-ideal and suppose $\Gamma$ contains no Capelli $c_{n}$, for any $n$. Then there exists a finite dimensional semisimple super algebra $U$ over $F$ which satisfies the following conditions.

\begin{enumerate}

\item

There exists a finite dimensional super algebra $A$ over $F$ with $Id(E(A)) = \Gamma$ and such that $A \cong U \oplus J_{A}$ is its Wedderburn-Malcev decomposition as super algebras.

\item

If $B$ is any finite dimensional super algebra over $F$ with $Id(E(B)) = \Gamma$ and $B_{ss}$ is its maximal semisimple subalgebra, then $U$ is a direct summand of $B_{ss}$ as super algebras.

\end{enumerate}

\end{theorem}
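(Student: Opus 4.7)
The plan is to reduce Theorem~\ref{uniquely determined nonaffine} to the $\mathbb{Z}_2$-graded version of Theorem~\ref{uniquely determined affine} via the Grassmann envelope functor $B \mapsto E(B)$. The key observation is that every super algebra $B \in \mathcal{M}_{\mathbb{Z}_2, \Gamma}$ is itself \emph{finite-dimensional}, so the $\mathbb{Z}_2$-graded $T$-ideal $Id_{\mathbb{Z}_2}(B)$ of super identities necessarily contains a graded Capelli polynomial. This places us in the ``affine graded'' setting, where the paper's $G$-graded analog of Theorem~\ref{uniquely determined affine} (with $G = \{e\}$, so that $G_2 = \mathbb{Z}_2$) can be invoked.

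First I would verify that $\mathcal{M}_{\mathbb{Z}_2, \Gamma}$ is closed under finite direct sums, using $E(B_1 \oplus B_2) \cong E(B_1) \oplus E(B_2)$ and hence $Id(E(B_1 \oplus B_2)) = Id(E(B_1)) \cap Id(E(B_2))$. Next, for each $B \in \mathcal{M}_{\mathbb{Z}_2, \Gamma}$, I would apply the $\mathbb{Z}_2$-graded Theorem~\ref{uniquely determined affine} to extract a minimal super semisimple subalgebra $U_B \leq B_{ss}$ associated to the graded $T$-ideal $Id_{\mathbb{Z}_2}(B)$. To produce a single $U$ that is a common summand across all $B \in \mathcal{M}_{\mathbb{Z}_2, \Gamma}$, I would invoke the criterion announced at the end of the abstract: two finite-dimensional simple super algebras are $\mathbb{Z}_2$-graded isomorphic if and only if their Grassmann envelopes are PI-equivalent. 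This transports graded isomorphism classes of simple super blocks into PI-invariants of the Grassmann envelope, so that the simple super components of $B_{ss}$ which are essential for $Id(E(B)) = \Gamma$ are determined by $\Gamma$ alone, independently of the representative $B$.

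The existence assertion (1) is then obtained by mirroring the construction in the affine case: attach a suitable nilpotent $\mathbb{Z}_2$-graded ideal $J_A$ to $U$ so that $Id(E(U \oplus J_A)) = \Gamma$, with the multiplication data on $J_A$ read off from any specific $B \in \mathcal{M}_{\mathbb{Z}_2, \Gamma}$ that already contains $U$ as a super semisimple summand. The minimality assertion (2) follows by combining the $\mathbb{Z}_2$-graded affine theorem applied to $B \oplus A$, which lives in $\mathcal{M}_{\mathbb{Z}_2, \Gamma}$ by the direct-sum stability above, with the simple-super-block matching criterion to conclude that $U$ must appear as a super semisimple summand of $B_{ss}$.

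The main obstacle is establishing the simple super algebra PI-equivalence criterion which is the linchpin of the whole argument: without it one cannot compare super semisimple parts of different representatives of $\Gamma$ using only the ungraded $T$-ideal. Once that bridge is in place, the structural argument parallels the affine case, with ``algebra'', ``semisimple subalgebra'', and ``direct summand'' replaced throughout by their $\mathbb{Z}_2$-graded counterparts.
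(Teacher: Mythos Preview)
Your reduction has a genuine gap. The $\mathbb{Z}_2$-graded affine theorem takes as input a $\mathbb{Z}_2$-graded $T$-ideal and returns a minimal semisimple super algebra for \emph{that} graded ideal. But the family $\mathcal{M}_{\mathbb{Z}_2,\Gamma}$ is cut out by the \emph{ungraded} condition $Id(E(B))=\Gamma$, and this condition does not determine $Id_{\mathbb{Z}_2}(B)$: the paper itself points out (Section~5) that there exist finite-dimensional super algebras $B,B'$ with $Id(E(B))=Id(E(B'))$ but $Id_{\mathbb{Z}_2}(B)\neq Id_{\mathbb{Z}_2}(B')$. So your extraction $B\mapsto U_B$ produces, a priori, different minimal super semisimple algebras for different representatives of $\Gamma$, and nothing in your outline forces them to coincide. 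The simple-super-block criterion you invoke compares \emph{simple} super algebras via PI-equivalence of their envelopes; it says nothing about comparing the composite $U_B$'s, nor about which simple components of $B_{ss}$ are ``essential for $Id(E(B))=\Gamma$'' as opposed to essential for $Id_{\mathbb{Z}_2}(B)$. Your step for part (2), applying the graded affine theorem to $B\oplus A$, runs into the same wall: you would need $Id_{\mathbb{Z}_2}(B\oplus A)=Id_{\mathbb{Z}_2}(A)$, which again presumes control over the graded $T$-ideal that you do not have.

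What the paper actually does is rebuild the Main Lemma directly at the level of \emph{ungraded} polynomials on $E(A)$. The covering relation is redefined for pairs of tuples (even and odd dimensions), and a strongly full polynomial for $E(A)$ is constructed by a two-strip scheme: designated variables indexed by a basis of $A_{ss,0}$ are \emph{alternated} within small sets and \emph{symmetrized} along rows, while those indexed by a basis of $A_{ss,1}$ are \emph{symmetrized} within small sets and \emph{alternated} along columns (the parity flip comes from the Grassmann tensor). One then shows that such a polynomial vanishes on $E(B)$ whenever $B$ does not cover $A$ as super algebras. This is the technical heart that your outline bypasses; once it is in place, Steps~0--4 from Section~2 run verbatim, and the fact that a $\mathbb{Z}_2$-simple algebra is determined by $(\dim A_0,\dim A_1)$ (Proposition~3.7) closes the argument without any appeal to the graded affine theorem or to the Section~5 criterion.
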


The proof of Theorem \ref{uniquely determined affine} is given in the next section (Section $2$). In Section $3$ we treat the nonaffine case, Theorem \ref{uniquely determined nonaffine}.

In the last two sections of this article we extend the main results to the setting of $G$-graded $T$-ideals and $G$-graded algebras. The main obstacle there is due to the fact that a $G$-graded simple algebra $A$ is not determined up to a $G$-graded isomorphism by the dimensions of the homogeneous components $A_{g}$, $g \in G$.  The proof uses the extension of Kemer's theory to $G$-graded algebras where $G$ is a finite group (see \cite{AB}).

\begin{remark}
The extension of the results above to algebras over fields of finite characteristic and in particular over finite fields does not seem to be straightforward. One of the reasons is that \textit{symmetrization}, an operation which appears in the proofs, may result as zero multiplication. We refer to the work of Belov, Rowen and Vishne on full quivers of representations of algebras over fields of arbitrary characteristic and more generally over commutative Noetherian domains (see \cite{BelROWVISH1}, \cite{BelROWVISH2}, \cite{BelROWVISH3}).  The notion of full quiver is useful for studying the interactions between the radical and the semisimple component of Zariski closed algebras, a notion that appears in Belov's remarkable solution of the Specht problem for affine algebras over fields of finite characteristic (see \cite{BELOV1}). We emphasize that such interactions for Zariski closed algebras are considerably more subtle than for finite dimensional algebras over a field of characteristic zero.
\end{remark}

\begin{section}{preliminaries and the proof of the affine case}\label{Section preliminaries and proof of the affine case}

We start by introducing some combinatorial terminology.

Let $\alpha = (a_{1}, \ldots, a_{q})$ be a $q$-tuple (or multi-set rather since the order will not play a role) of nonnegative integers. For any sub-tuple $\gamma$ of $\alpha$ we let $\sigma(\gamma) = \sum_{a\in \gamma}a$ be the weight of $\gamma$. We set $\sigma(\gamma) = 0$ if $\gamma$ is the empty tuple.

In the sequel the tuple $\alpha$ will correspond to the dimensions of the simple components of a finite dimensional semisimple algebra. More precisely, if $A$ is a finite dimensional algebra over $F$, we let $A \cong A_{1}\times \cdots \times A_{q} \oplus J_{A}$ be its Wedderburn-Malcev decomposition. Then $p_{A} = (dim_{F}(A_{1}), \ldots, dim_{F}(A_{q}))$ is the tuple corresponding to $A$.

\begin{definition}
Let $\alpha = (a_{1}, \ldots, a_{r})$ and $\beta = (b_{1},\ldots, b_{s})$ be tuples of nonnegative integer. We say $\beta$ \textit{covers} $\alpha$ if the tuple $\alpha$ may be decomposed into $s$ disjoint sub-tuples $T_{1}, \ldots, T_{s}$ such that $\sigma(T_{i}) \leq b_{i}$, $i=1,\ldots,s$. In case $r < s$, we extend the tuple $\alpha = (a_{1}, \ldots, a_{r})$ by adding zero's.
\end{definition}

\begin{remark}

\begin{enumerate}
\item

In the definition above we may need to consider empty sub-tuples.

\item

In the sequel it will be convenient to allow tuples with zero's. We view tuples which differ only by the number of zero's as equal.
\end{enumerate}

\end{remark}

\begin{example}
The tuple $(16, 12)$ covers the tuple $(10, 9, 3, 3)$ but it does not cover the tuple $(15, 8, 5)$.
\end{example}

\begin{note}\label{antysymmetric and majorization}
\begin{enumerate}
\item
The covering relation is antisymmetric.

\item
The covering relation is strictly stronger than majorization.
\end{enumerate}
\end{note}

Next we recall some definitions and a result from Kemer's theory.

Let $A$ be a finite dimensional algebra over $F$.  Let $A \cong A_{ss}\oplus J_{A}$ be its Wedderburn-Malcev decomposition where $J_{A}$ is the Jacobson radical and $A_{ss}$ is a semisimple subalgebra supplementing $J_{A}$. The algebra $A_{ss}$ decomposes uniquely (up to permutation) into a direct product of simple algebras $A_{1}\times \cdots \times A_{q}$, where $A_{i} \cong M_{n_{i}}(F)$ is the algebra of $n_{i} \times n_{i}$-matrices over $F$. Furthermore, it is well known all semisimple supplements of $J_{A}$ in $A$ are isomorphic.

It is clear that in order to test whether a multilinear polynomial $p$ is an identity of $A$ it is sufficient to evaluate the polynomial on a basis of $A$ and so we fix from now on a basis $\mathcal{B} = \{e^{i}_{k,l}, u_1,\ldots,u_d\}$. Here, the elements $\{e^{i}_{k,l}\}$, $1 \leq k,l \leq n_{i}$ are the elementary matrices of $M_{n_{i}}(F)$, $i = 1,\ldots, q$, and $\{u_1,\ldots,u_d\}$ is a basis of $J_{A}$.

\begin{definition} Let $p = p(x_{1},\ldots,x_{n})$ be a multilinear polynomial. We say an evaluation of $p$ on $A$ is \textit{admissible} if the variables of $p$ assume values only from the basis $\mathcal{B}$. We refer to an evaluation of a variable as  \textit{semisimple} (resp. \textit{radical}) if the value is an elementary matrix $e^{i}_{k, l}$ (resp. an element $ u_{i} \in J_{A}$).

\end{definition}
\textit{In the sequel we will consider only admissible evaluations.}

\begin{definition}
Let $A$ be a full algebra (Definition \ref{full algebra}). We say a multilinear polynomial $p(x_{1},\ldots,x_{n})$ is
\begin{enumerate}
\item
 $A$\textit{-weakly full} (or \textit{weakly full} of $A$ or \textit{weakly full} when the algebra in question is clear) if it has a nonzero admissible evaluation on $A$ where elements from all simple components are represented in the evaluation.
\item
$A$-\textit{full} if every simple component of $A_{ss}$ is represented in \textit{every} admissible nonzero evaluation on $A$. Also here we may use the terminology \textit{full} of $A$ or just \textit{full}.
\item

$A$-\textit{strongly full} if every basis element of $A_{ss}$ appears in every admissible nonzero evaluation of $p$.

\end{enumerate}
\end{definition}

\begin{remark}
In this paper we make use of polynomials that are \textit{weakly full} or \textit{strongly full}. We mention \textit{full} polynomials here just for completeness. They appear in Kemer's theory (see \cite{AljBelovKar}, Definition $5.10$).
\end{remark}
It is clear that if $p$ is $A$-strongly full then it is full. Also, every full polynomial is weakly full.

We are interested in the opposite direction. We start with
\begin{lemma}\label{full algebras full polynomials}
If $A$ is a full algebra then it admits a weakly full polynomial.
\end{lemma}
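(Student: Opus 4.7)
The plan is to construct a weakly full polynomial directly as a single monomial, reading it off from the defining nonvanishing product in the fullness condition.

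First, by the definition of a full algebra, after reordering the simple components we have
\[
A_{1}\cdot J_{A}\cdot A_{2}\cdot J_{A}\cdots J_{A}\cdot A_{q}\;\neq\; 0.
\]
Hence there exist elements $a_{i}\in A_{i}$ for $i=1,\dots,q$ and $r_{s}\in J_{A}$ for $s=1,\dots,q-1$ such that the product $a_{1}r_{1}a_{2}r_{2}\cdots r_{q-1}a_{q}$ is nonzero in $A$. I would then expand each $a_{i}$ as a linear combination of the elementary matrix basis $\{e^{i}_{k,l}\}$ of $A_{i}\cong M_{n_{i}}(F)$ and each $r_{s}$ as a linear combination of the radical basis $\{u_{1},\dots,u_{d}\}$. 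By multilinearity of the product, there must exist at least one choice of basis vectors, one elementary matrix $e^{i}_{k_{i},l_{i}}$ from each simple component and one radical basis element $u_{s_{1}},\dots,u_{s_{q-1}}$, such that
\[
e^{1}_{k_{1},l_{1}}\,u_{s_{1}}\,e^{2}_{k_{2},l_{2}}\,u_{s_{2}}\cdots u_{s_{q-1}}\,e^{q}_{k_{q},l_{q}}\;\neq\; 0.
\]

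Now I would take the multilinear monomial
\[
p(x_{1},y_{1},x_{2},y_{2},\dots,y_{q-1},x_{q}) \;=\; x_{1}y_{1}x_{2}y_{2}\cdots y_{q-1}x_{q},
\]
and the admissible evaluation $x_{i}\mapsto e^{i}_{k_{i},l_{i}}$, $y_{s}\mapsto u_{s_{s}}$. By construction this evaluation takes all variables to basis elements of $\mathcal{B}$ and produces a nonzero value in $A$. Moreover, each of the $q$ simple components $A_{i}$ is represented, since the variable $x_{i}$ is evaluated to an elementary matrix of $A_{i}$. Hence $p$ is a weakly full polynomial for $A$, completing the proof.

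There is essentially no serious obstacle here: the lemma is almost a tautology once one unpacks the definitions, and the content of the argument is simply the multilinear expansion that converts the abstract nonvanishing of $A_{1}\cdot J_{A}\cdot A_{2}\cdots A_{q}$ into a nonvanishing product of individual basis vectors. The only mild care needed is to recognize that a multilinear monomial qualifies as a polynomial, and that the chosen evaluation automatically satisfies the admissibility and coverage conditions built into the definition of \emph{weakly full}.
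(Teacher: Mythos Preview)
Your proof is correct and is essentially the same as the paper's: both take the multilinear monomial $x_{1}y_{1}x_{2}\cdots y_{q-1}x_{q}$ of degree $2q-1$ and observe that the fullness condition $A_{1}J_{A}A_{2}\cdots J_{A}A_{q}\neq 0$ immediately furnishes an admissible nonzero evaluation hitting every simple component. You simply make explicit the multilinear expansion into basis vectors that the paper leaves implicit.
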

\begin{proof}
Let $A$ be as above. Then the multilinear monomial of cardinality $2q-1$ is weakly full. Indeed, we get a nonzero evaluation where we put
$q$ semisimple (resp. $q-1$ radical) values in the odd (resp. even) positions).
\end{proof}

The following theorem is basically \textit{Kemer's Lemma} $1$ (see \cite{AljBelovKar}).

\begin{theorem}\label{full polynomials existence}
The following hold.

\begin{enumerate}

\item
Every full algebra admits a multilinear strongly full polynomial and therefore admits a full polynomial.

\item

Let $A$ be a full algebra and $f_{0}$ be a multilinear weakly full polynomial of $A$. Then there exists a full polynomial $f$ of $A$ in $\langle f_{0}\rangle_{T}$, the $T$-ideal generated by $f_{0}$.

\item

Let $A$ be a full algebra and $f_{0}$ a multilinear weakly full polynomial of $A$. Then there exists a strongly full polynomial $f \in \langle f_{0}\rangle_{T}$ of $A$.

\end{enumerate}
\end{theorem}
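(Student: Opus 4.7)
The plan is to focus on proving statement (3), since (2) follows immediately from (3) (every strongly full polynomial is, in particular, full), and (1) then follows by combining (3) with Lemma \ref{full algebras full polynomials}, which already provides a weakly full polynomial from which to start. So assume $A$ is full with $A_{ss}=A_1\times\cdots\times A_q$, $A_i\cong M_{n_i}(F)$, $d_i:=\dim_F A_i=n_i^2$, and let $f_0$ be multilinear and weakly full, with witnessing admissible nonzero evaluation $\phi_0$ in which every simple component is represented.

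The construction I would use is Kemer's \emph{alternating folds}. For each $i$, pick one variable $x_{j_i}$ of $f_0$ with $\phi_0(x_{j_i})$ an elementary matrix of $A_i$; such a variable exists since $f_0$ is weakly full. Replace the single occurrence of $x_{j_i}$ in the monomial skeleton of $f_0$ by the Capelli-like block
\[
\Phi_i \;:=\; \sum_{\sigma\in S_{d_i}} \sgn(\sigma)\; x_{i,\sigma(1)}\, w_{i,1}\, x_{i,\sigma(2)}\, w_{i,2}\cdots w_{i,d_i-1}\, x_{i,\sigma(d_i)},
\]
with fresh variables $x_{i,1},\dots,x_{i,d_i}$ and buffers $w_{i,1},\dots,w_{i,d_i-1}$ (one of the $x_{i,k}$ takes the place of $x_{j_i}$; the rest are genuinely new). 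Iterating the replacement for $i=1,\dots,q$ at disjoint slots of $f_0$ yields a multilinear polynomial $f$. Because each replacement is a substitution of variables of $f_0$, we have $f\in\langle f_0\rangle_T$ by construction.

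A nonzero admissible evaluation of $f$ is obtained by extending $\phi_0$: send each $w_{i,k}\mapsto 1_{A_i}$ (absorbing the factors that flanked $x_{j_i}$ in $\phi_0$ if they lay in $A_i$, or keeping them as radical values otherwise), and send $x_{i,1},\dots,x_{i,d_i}$ to the elementary-matrix basis of $A_i$. Then $\Phi_i$ evaluates to $d_i!$ times a fixed nonzero element of $A_i$, and the extended evaluation of $f$ becomes a nonzero scalar multiple of $\phi_0(f_0)$. For strong fullness, let $\psi$ be an arbitrary admissible nonzero evaluation of $f$. The antisymmetrization in $\Phi_i$ forces $\psi(x_{i,1}),\dots,\psi(x_{i,d_i})$ to be linearly independent basis elements of $\mathcal{B}$; the Peirce decomposition of $A$ with respect to the unit idempotents $1_{A_1},\dots,1_{A_q}$, together with the fact that the fold is flanked by factors of $f$ forcing the output of $\Phi_i$ into a specific Peirce block of $A_i$, pins each $\psi(x_{i,k})$ inside $A_i$; the dimension count $d_i=\dim A_i$ then forces the $d_i$-tuple to exhaust the basis of $A_i$. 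Since this holds for every $i$, the polynomial $f$ is strongly full.

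The main obstacle is precisely the last step: ruling out ``leakage'' of fold variables into other simple components or into the radical. This requires a careful Peirce-component bookkeeping, because the buffer values $\psi(w_{i,k})$ are not free---they are pinned by the requirement that both the internal product of $\Phi_i$ and the surrounding product in $f$ survive, which in turn restricts each $\psi(x_{i,k})$ to a specific Peirce block and, combined with linear independence, forces the fold values into $A_i$. A secondary technical point is that when folds for different simple components are inserted, the witness evaluation built from $\phi_0$ must remain nonzero simultaneously on every fold; this is the reason for performing the replacements at \emph{disjoint} letter-slots of $f_0$ and for choosing buffers that reproduce the relevant fragment of $\phi_0$.
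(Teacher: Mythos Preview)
Your reduction of (1) and (2) to (3) is correct and matches the paper. The construction you propose---inserting a single alternating fold $\Phi_i$ of size $d_i=\dim A_i$ at one slot per simple component---is however not enough to yield a strongly full polynomial, and the gap is exactly the one you flag at the end but do not close.

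The problem is that for an \emph{arbitrary} admissible nonzero evaluation $\psi$ of $f$, nothing constrains the fold variables to land in $A_i$, or even in $A_{ss}$. The ``flanking factors'' you invoke are themselves variables of $f$, and under $\psi$ they are just as free as the fold variables: they may take radical values, or values in a different simple component, so there is no Peirce block into which the output of $\Phi_i$ is forced. Concretely, if $J_A$ has nilpotency index larger than $d_i$, one can evaluate all of $x_{i,1},\dots,x_{i,d_i}$ and the buffers $w_{i,k}$ on linearly independent radical basis elements and still get a nonzero product inside $\Phi_i$; the surrounding variables can then be chosen so that the global value is nonzero. Alternation gives linear independence, nothing more. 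Likewise, even if the $\psi(x_{i,k})$ happened to be semisimple, a fold of size $d_i$ could sit entirely inside some $A_j$ with $d_j\ge d_i$. So neither radical leakage nor cross-component leakage is excluded.

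The paper's construction supplies precisely the missing idea: instead of one fold per component, it inserts $k$ disjoint copies of an alternating block of size $\dim A_{ss}$ (a \emph{small set}), with $k$ chosen larger than the nilpotency index of $J_A$. In any nonzero admissible evaluation, each small set consists of distinct basis elements; if a small set were not a full basis of $A_{ss}$, at least one of its values would be radical. But $k$ exceeding the nilpotency index forbids a radical value in \emph{every} small set, so some small set is entirely semisimple and, by cardinality, is a full basis of $A_{ss}$. This pigeonhole-against-nilpotency step is what makes the polynomial strongly full, and it has no analogue in your single-fold construction.
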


\begin{proof}
Clearly, the $3$rd statement implies the $2$nd and together with Lemma \ref{full algebras full polynomials} it implies the $1$st statement. Statement $3$ follows from the construction in the proof of Kemer's lemma $1$ (see \cite{AljBelovKar}).
\end{proof}
As we shall need to refer to the precise construction of strongly full polynomials starting from a weakly full polynomial $f_{0}$, let us recall their construction here. It is convenient to illustrate first the construction on the weakly full polynomial mentioned above.

Let $A\cong A_{1}\times \cdots \times A_{q} \oplus J_{A}$, where $A_{i} \cong M_{n_{i}}(F)$, $i=1,\ldots,q$ (as above) and suppose that after reordering the simple components we have $A_{1}JA_{2}\cdots JA_{q} \neq 0$. Let $f_{0} = X_{1}\cdot w_{1} \cdots w_{q-1}\cdot X_{q}$ be a monomial of $2q-1$ variables which is clearly weakly full by the obvious evaluation. Let
$$Z_{n} = Z_{n}(x_{1},\ldots,x_{n^{2}};y_{1},\ldots,y_{n^{2}+1}) = y_{1}\cdot x_{1}\cdot y_{2} \cdots y_{n^{2}}\cdot x_{n^{2}}\cdot y_{n^{2}+1}$$
be a multilinear monomial on $2n^{2}+1$ variables. For $i = 1,\ldots,q$, we consider $k$ monomials $Z_{n_{i}}$ in disjoint variables, denoted by $Z_{n_{i},l}$, $l= 1,\ldots, k$, where the integer $k$ is sufficiently large and will be determined later. We set $\Delta_{i} = Z_{n_{i},1}\cdots Z_{n_{i},k}$, that is the product of $k$ copies of the monomial $Z_{n_{i}}$ with disjoint sets of variables. Finally, in view of  the inequality  $A_{1}JA_{2}\cdots JA_{q} \neq 0$ we apply the $T$-operation and replace the variable $X_{i}$ by $X_{i}\cdot \Delta_{i}$ in the polynomial $f_{0}$ (here it is just a monomial) and obtain the monomial

$$\Omega =  X_{1}\cdot \Delta_{1}\cdot w_{1}\cdot X_{2}\cdot \Delta_{2}\cdot w_{2} \cdots w_{q-1}\cdot X_{q}\cdot\Delta_{q}.$$

We refer to the $x$'s (lower case) in $\Omega$ as \textit{designated} variables, the $y$'s as \textit{frame} variables and $w$'s as \textit{bridge} variables. Now, it is not difficult to see that the monomial $\Omega$ admits a nonzero evaluation where the $x$'s from $Z_{n_{i},l}$ get values consisting of the full basis of the $i$th simple component, that is the elementary matrices $\{e^{i}_{t,s}\}$,  the $y$'s from $Z_{n_{i},l}$ get values of the form $e^{i}_{t,t}$ and the $w$'s get radical values which bridge the different simple components. Fixing $r = 1, \ldots, k,$ we alternate all $x$'s from the monomials $Z_{n_{i},r}$, $i=1,\ldots,q$, so we obtain $k$ alternating sets of cardinality $dim_{F}(A_{ss})$. We denote the polynomial obtained by $f_{A}$.
We adopt the terminology used in Kemer's theory and refer to each alternating set of designated variables as a \textit{small set}. Moreover, we shall refer to the set of variables $x$ in a small set together with the corresponding frames, that is the $y$ variables that border the $x$ variables, as an \textit{augmented small set}.

 \begin{remark}

In Kemer's theory there is also a notion of \textit{big sets}, sets which involve the alternation of semisimple and bridge variables. We will not make use of big sets here.

\end{remark}
Suppose the integer $k$, namely the number of small sets in $f_{A}$, exceeds the nilpotency index of $A$. Let us show $f_{A}$ is a strongly full polynomial of $A$. We will show that if $\delta$ is any admissible \textit{nonzero} evaluation of $f_{A}$, there is at least one small set which assumes precisely a full basis of $A_{ss}$. Indeed, by the alternation of designated variables we are forced to evaluate each small set on different basis elements and if this is not a full basis of $A_{ss}$, we have that at least one of the designated variables assumes a radical value. Since $k$ is larger than the nilpotency index of $A$, we cannot have a radical evaluation in every small set. This shows $f_{A}$ is strongly full. In fact this proves the last statement of Theorem \ref{full polynomials existence} for the weakly full polynomial $f_{0} = X_{1}\cdot w_{1} \cdots w_{q-1}\cdot X_{q}$.

Let us proceed now to the general case, namely where $f_{0}$ is assumed to be an arbitrary multilinear weakly full polynomial of $A$. Denote by $\Phi$ a nonzero evaluation of  $f_{0}$ which visits every simple component of $A$. Let us denote variables of $f_{0}$ which assume values from the simple components $A_{1},\ldots, A_{q}$ by $X_{1}, \ldots, X_{q}$ respectively. Since the evaluation $\Phi(f_{0})$ is nonzero, it is nonzero on one of the monomials of $f_{0}$ which we fix from now on and denote it by $R_{e}$. We have then that $f_{0} = \sum_{\sigma \in S_{m}} = \lambda_{\sigma}R_{\sigma}$ where $\lambda_{\sigma} \in F$ and $\lambda_{e} = 1$. Here $m$ is the number of variables in $f_{0}$. We proceed now as in the previous case, namely replace the variables $X_{i}$ by $X_{i}\cdot \Delta_{i}$ and obtain a polynomial which we denote by $\Omega$. We have that if $f_{0} = f_{0}(X_{1},\ldots, X_{q}; M)$ then $\Omega = f_{0}(X_{1}\Delta_{1},\ldots, X_{q}\Delta_{q}, M) \in \langle f_{0} \rangle_{T}$. Here $M$ is a suitable set of variables. By an appropriate evaluation of the monomials $\Delta_{i}$, $i=1,\ldots,q$, we see that $\Omega$ is a nonidentity of $A$ and is clearly weakly full. Finally we alternate the designated variables as above and obtain a polynomial which we denote by $f_{A}$. It is not difficult to see that $f_{A}$ satisfies the $3$rd condition of Theorem \ref{full polynomials existence} with respect the given weakly full polynomial $f_{0}$.

\begin{lemma}\label{principal lemma} $($Main Lemma$)$
Notation as above. Suppose $A$ and $B$ are full algebras. Suppose $p_{B}$ does not cover $p_{A}$. Then there exists a strongly full polynomial $f_{A}$ of $A$ which vanishes on $B$. In fact, if $f_{0}$ is any weakly full polynomial of $A$ then there exists a strongly full polynomial $f_{A} \in \langle f_{0} \rangle_{T}$ which vanishes on $B$.
\end{lemma}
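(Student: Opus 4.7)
The plan is to apply the construction from Theorem \ref{full polynomials existence}(3) to the given weakly full polynomial $f_0$, producing a strongly full polynomial $f_A \in \langle f_0 \rangle_T$ whose internal parameter $k$ (the number of copies of each $Z_{n_i}$ inside $\Delta_i$, and hence the number of alternating small sets) we are free to choose. I would set $k = \max(\mathrm{nil}(J_A),\, 2\mathrm{nil}(J_B))$. The first bound secures that $f_A$ is strongly full on $A$ by Theorem \ref{full polynomials existence}(3); the second is what I use to force $f_A$ to be an identity of $B$.

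Fix an admissible substitution $\Phi$ of the variables of $f_A$ by basis elements of $B$ and write $e = \mathrm{nil}(J_B)$. We want to show $\Phi(f_A) = 0$. Since $J_B$ is a two-sided ideal with $J_B^e = 0$, any monomial product that involves at least $e$ radical factors vanishes. If inside some small set two designated variables receive equal values under $\Phi$, the alternation over that small set already kills the expression. Otherwise each small set carries $\dim A_{ss}$ pairwise distinct $x$-values. Similarly, if $\Phi$ evaluates more than $e-1$ variables on radical elements, every summand in the alternating sum is zero; so we may assume the total radical count of $\Phi$ is at most $e-1$. Consequently at most $e-1$ small sets contain a radical designated variable, and at most $e-1$ small sets contain a radical frame $y$ inside some block $Z_{n_i, r}$. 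Because $k \geq 2e$, there exists a \emph{good} small set $r_0$ all of whose designated $x$'s and all of whose internal frame $y$'s are semisimple.

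The key claim is that the existence of a good small set forces every signed summand of $\Phi(f_A)$ to vanish. Let $\sigma = (\sigma^1, \ldots, \sigma^k)$ be any permutation of designated variables (one per small set) and suppose that the corresponding product is nonzero in $B$. Inside $Z_{n_i, r_0}$ every factor, the $\sigma^{r_0}$-permuted $x$'s as well as the fixed $y$'s, is semisimple; orthogonality of distinct simple components of $B_{ss}$ therefore forces all factors of $Z_{n_i, r_0}$ to lie in a single simple component $B_{\alpha(i)}$. Putting $T_\alpha = \{i : \alpha(i) = \alpha\}$ gives a partition of $\{1,\ldots,q\}$, and the designated $x$'s that $\sigma^{r_0}$ places into $\bigcup_{i \in T_\alpha} Z_{n_i, r_0}$ form $\sum_{i \in T_\alpha} a_i$ pairwise distinct elements of $B_\alpha$, whence $\sum_{i \in T_\alpha} a_i \leq b_\alpha$ for each $\alpha$. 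This is precisely a witness that $p_B$ covers $p_A$, contradicting the hypothesis. Hence no summand survives and $\Phi(f_A) = 0$, so $f_A$ is an identity of $B$.

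The main obstacle is the covering step: one must carefully track, for an arbitrary permutation $\sigma$, the constraints imposed by the good small set on the position-value assignment within each $Z_{n_i, r_0}$, and use that a nonzero product of semisimple elementary matrices inside $B_{ss}$ is confined to a single simple summand with compatible row-column indices. A minor technical point is that the argument is insensitive to the choice of $f_0$: only the designated $x$'s and the internal $y$'s inside the $\Delta_i$ blocks enter the radical-count and the covering analysis, while the outer skeleton of $f_0$ together with its bridge variables plays no role. This is why the same conclusion goes through for an arbitrary multilinear weakly full polynomial $f_0$, not just for the canonical monomial $X_1 w_1 \cdots w_{q-1} X_q$.
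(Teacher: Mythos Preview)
Your proof is correct and follows essentially the same approach as the paper: both arguments pick $k$ large relative to the nilpotency index of $J_B$, locate an augmented small set whose designated and frame variables all receive semisimple values in a nonzero evaluation, and then use contiguity of each block $Z_{n_i,r_0}$ together with orthogonality of the simple components of $B_{ss}$ to read off a covering of $p_A$ by $p_B$. Your choice $k \geq 2\,\mathrm{nil}(J_B)$ is slightly looser than the paper's $k \geq \mathrm{nil}(J_B)$ (since a single bound on the total number of radical values already limits the number of bad augmented small sets to at most $e-1$), and you fix the good small set once for all monomials rather than per nonzero monomial, but neither difference is material.
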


\begin{proof}
Let $f_{A}$ be the strongly full polynomial of $A$ as constructed above in case $f_{0} = X_{1}\cdot w_{1} \cdots w_{q-1}\cdot X_{q}$. We take a large number of small sets $k$, exceeding the nilpotency index of $B$. We claim $f_{A}$ is an identity of $B$. We will show that if this is not the case then necessarily $B$ covers $A$. Let us fix a nonzero evaluation $\Phi$ of $f_{A}$ on $B$ and consider one monomial, which we may assume is the monomial $\Omega$ of $f_{A}$ (see the construction above), whose value is nonzero.
Note that by the condition on $k$, there exists an augmented small set, say the $j$th set where $j \in \{1,\ldots,k\}$, which is free of radical values. It follows that the $\Phi$-values of each segment in $\{Z_{n_{1}, j}, \ldots, Z_{n_{q}, j}\}$ consist only of semisimple elements in $B$, and moreover semisimple elements from the same simple component. But because the evaluation of $\Phi$ on $f_{A}$ is nonzero and the variables in the $j$th small set alternate, the semisimple values of $B$ must be linearly independent. This implies that $B$ covers $A$ as desired.

In the general case we may argue as follows. Let $f_{0}$ be an arbitrary weakly full polynomial of $A$ and let $R_{\sigma} = R_{\sigma}(X_{1},\ldots, X_{q}; M)$ be any monomial of $f_{0}$. Applying the $T$-operation on $R_{\sigma}$ we obtain $\Omega_{\sigma} = R_{\sigma}(X_{1}\Delta_{1},\ldots, X_{q}\Delta_{q}, M) \in \langle R_{\sigma} \rangle_{T}$. Next we alternate the designated variables as above and obtain a polynomial which we denote by $(R_{\sigma})_{A}$. As in the first case considered, that is in case where $f_{0} = X_{1}\cdot w_{1} \cdots w_{q-1}\cdot X_{q}$, we see that if $(R_{\sigma})_{A}$ admits a nonzero evaluation on $B$, then $B$ covers $A$. It follows that if $f_{A}$ admits a nonzero evaluation on $B$, this is true also for the polynomial $(R_{\sigma})_{A}$, some $\sigma$, and so $B$ covers $A$.
\end{proof}

\begin{corollary}
Let $A$ and $B$ full algebras. If they are PI-equivalent, then their semisimple parts, $A_{ss}$ and $B_{ss}$ are isomorphic.
\end{corollary}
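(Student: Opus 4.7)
The plan is to apply the Main Lemma \ref{principal lemma} twice, combined with the antisymmetry of the covering relation recorded in Note \ref{antysymmetric and majorization}, and then use the fact that over an algebraically closed field the isomorphism type of a semisimple algebra is determined by the multiset of dimensions of its simple components.

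First I would argue that $p_{B}$ covers $p_{A}$. Suppose not. Since $A$ is full, Lemma \ref{full algebras full polynomials} gives a weakly full polynomial $f_{0}$ of $A$, and the Main Lemma then produces a strongly full polynomial $f_{A} \in \langle f_{0}\rangle_{T}$ of $A$ which vanishes on $B$. Being strongly full, $f_{A}$ is in particular not an identity of $A$, so $f_{A} \in Id(B) \setminus Id(A)$, contradicting PI-equivalence. By symmetry (swapping the roles of $A$ and $B$, which is legitimate because $B$ is also full), we obtain that $p_{A}$ covers $p_{B}$.

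Next, by the antisymmetry of the covering relation (Note \ref{antysymmetric and majorization}(1)), the two tuples $p_{A}$ and $p_{B}$ are equal as multisets (after discarding zero entries). Write $A_{ss} \cong A_{1}\times\cdots\times A_{q}$ and $B_{ss} \cong B_{1}\times\cdots\times B_{r}$ with $A_{i} \cong M_{n_{i}}(F)$ and $B_{j} \cong M_{m_{j}}(F)$. The equality $p_{A} = p_{B}$ means that the multisets $\{n_{i}^{2}\}$ and $\{m_{j}^{2}\}$ coincide, and hence so do the multisets $\{n_{i}\}$ and $\{m_{j}\}$. Since $F$ is algebraically closed, each simple component is isomorphic to a matrix algebra $M_{n}(F)$, which is determined up to isomorphism by $n$. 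Matching the simple factors according to this bijection yields an algebra isomorphism $A_{ss} \cong B_{ss}$.

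There is no real obstacle here beyond the content already packaged into the Main Lemma: the work has been done in establishing that failure of covering forces the existence of a strongly full polynomial of one algebra that vanishes on the other. Once that is in hand, the corollary is a two-line symmetrization argument followed by the elementary classification of finite dimensional semisimple algebras over an algebraically closed field of characteristic zero.
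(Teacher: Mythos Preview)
Your argument is correct and matches the paper's own proof essentially line for line: apply the Main Lemma in each direction to conclude that $p_A$ and $p_B$ cover one another, invoke antisymmetry of covering (Note \ref{antysymmetric and majorization}) to get equality of the dimension tuples, and then use that over an algebraically closed field a finite dimensional semisimple algebra is determined up to isomorphism by the multiset of dimensions of its simple factors. The only difference is that you spell out the contradiction step and the passage from equal tuples to an isomorphism more explicitly than the paper does.
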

\begin{proof}
Indeed, $A$ and $B$ must cover each other. It follows that the tuple of dimensions of the simple components of $A$ and $B$ coincide up to a permutation (see Note \ref{antysymmetric and majorization}) and hence they are isomorphic.
\end{proof}

In what follows we will need a somewhat stronger statement.
\begin{corollary}
Let $A$ be a full algebra and $B_{1},\ldots,B_{t}$ be a finite family of full algebras, each not covering $A$. If $f_{0}$ is a weakly full polynomial of $A$ then there is a strongly full polynomial $f_{A} \in \langle f_{0} \rangle$ of $A$ that vanishes on $B_{i}$, $i=1,\ldots,t$. In particular if $B$ is a direct sum of full algebras, each not covering $A$, then there exists a strongly full polynomial $f_{A} \in \langle f_{0} \rangle$ of $A$ which vanishes on $B$.

\end{corollary}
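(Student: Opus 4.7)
The plan is to perform the construction of the Main Lemma just once, but with enough small sets to simultaneously handle every $B_{i}$. Given the weakly full polynomial $f_{0}$ of $A$, fix an integer $k$ strictly greater than $\max\{\depth(A), \depth(B_{1}), \ldots, \depth(B_{t})\}$, where $\depth$ denotes the nilpotency index of the Jacobson radical. Using this value of $k$, form the polynomial $f_{A} \in \langle f_{0} \rangle_{T}$ exactly as in the construction preceding Lemma \ref{principal lemma}: apply the $T$-operation to each monomial of $f_{0}$, inserting $k$ copies of the monomial $Z_{n_{i}}$ inside each $\Delta_{i}$, and then alternate the designated variables to produce $k$ small sets of cardinality $\dim_{F}(A_{ss})$.

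Since $k$ exceeds the nilpotency index of $A$, the discussion preceding Lemma \ref{principal lemma} shows that $f_{A}$ is a strongly full polynomial of $A$. To verify that $f_{A}$ vanishes on each $B_{i}$, I would repeat the argument in the proof of Lemma \ref{principal lemma} verbatim: if some admissible evaluation of $f_{A}$ on $B_{i}$ were nonzero, then it would be nonzero on a monomial obtained from some $R_{\sigma}$ of $f_{0}$; the inequality $k > \depth(B_{i})$ forces at least one augmented small set to be free of radical values; and then the alternation, together with the requirement that the evaluation be nonzero, produces a decomposition of $p_{A}$ into disjoint sub-tuples whose weights are bounded by the entries of $p_{B_{i}}$. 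This means $p_{B_{i}}$ covers $p_{A}$, contradicting the hypothesis. Hence $f_{A}$ is an identity of each $B_{i}$.

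For the last assertion, let $B = B_{1} \oplus \cdots \oplus B_{t}$ be a direct sum of full algebras, none of which covers $A$. A basis of $B$ may be taken as the union of the chosen bases of the summands $B_{i}$. Since $f_{A}$ is multilinear and products of elements from distinct summands vanish, any admissible nonzero evaluation of $f_{A}$ on $B$ must place all of its substituted values in a single $B_{i}$. The preceding paragraph shows every such evaluation is zero, so $f_{A}$ is an identity of $B$. The only genuine obstacle is ensuring that a single choice of $k$ simultaneously makes $f_{A}$ strongly full for $A$ and identically zero on every $B_{i}$; this is resolved by taking $k$ larger than all the relevant nilpotency indices at once, which is possible because the family $B_{1}, \ldots, B_{t}$ is finite.
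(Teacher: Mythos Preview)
Your proof is correct and follows exactly the paper's approach: the paper's own proof consists of the single observation that one need only choose the number of small sets $k$ to exceed the nilpotency index of each $J_{B_{i}}$, which is precisely what you do (you also correctly include the nilpotency index of $A$ to guarantee strong fullness, and spell out the direct-sum reduction). The extra detail you supply is consistent with the construction and argument of the Main Lemma.
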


\begin{proof}
We only need to pay attention to the number of small sets $k$ in $f_{A}$, namely it should exceed the nilpotency index of each $J_{B_{i}}$, $i=1,\ldots,t$.
\end{proof}

Recall that any affine PI-algebra $A$ and in particular any finite dimensional algebra is PI-equivalent to a direct sum of full algebras. See for instance \cite{AljBelovKar}, \cite{AGPR}. Here we will need a more precise statement.

\begin{definition}
Let $A$ be finite dimensional algebra. We say $$ Pr(A)_{\{T_{1},\ldots, T_{n}\}}$$ is a \textit{presentation} \textit{of $A$ by full algebras} if the following hold.
\begin{enumerate}

\item

$T_{i}$ is full for $i=1,\ldots,n$.
\item

$Pr(A)_{\{T_{1},\ldots, T_{n}\}} \cong T_{1} \oplus \cdots \oplus T_{n}$.

\item
$Pr(A)_{\{T_{1},\ldots, T_{n}\}}$ is PI equivalent to $A$.

\end{enumerate}
\end{definition}

\begin{remark}\label{fixing the presentation into full subalgebras}
Note that an algebra may have two different presentations which are isomorphic. Thus when referring to a presentation $Pr(A)_{\{T_{1},\ldots, T_{n}\}}$ we are fixing not only the isomorphism type but also the decomposition into full subalgebras (up to permutation). In the sequel, by abuse of notation, we may denote the presentation $Pr(A)_{\{T_{1},\ldots, T_{n}\}}$ just by $Pr(A)$. Note that if $\Gamma$ is a $T$-ideal containing Capelli polynomials we may view $P = P_{\{T_{1},\ldots, T_{n}\}}$ as a presentation of $\Gamma$.
\end{remark}

\begin{proposition}
Let $A$ be finite dimensional. Then there exists a presentation $Pr(A)_{\{T_{1},\ldots, T_{n}\}}$ of $A$ where the semisimple subalgebra $(T_{i})_{ss}$ of $T_{i}$ is a direct summand of $A_{ss}$ for $i=1,\ldots,n$.

\end{proposition}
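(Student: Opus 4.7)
The plan is to realize the full summands as corner subalgebras of $A$. Let $e_1, \ldots, e_q$ be the central idempotents of $A_{ss} = A_1 \times \cdots \times A_q$, and for each subset $S \subseteq \{1, \ldots, q\}$ set $e_S = \sum_{i \in S} e_i$ and $T_S := e_S A e_S$. Each $T_S$ is a subalgebra of $A$ with Wedderburn--Malcev decomposition $T_S = (\prod_{i \in S} A_i) \oplus e_S J_A e_S$, so $(T_S)_{ss} = \prod_{i \in S} A_i$ is literally a direct summand of $A_{ss}$. Take
$$
Pr(A) = \bigoplus_{S \,:\, T_S \text{ is full}} T_S.
$$
Conditions (1) and (2) of the presentation definition are then automatic, and since each $T_S$ embeds in $A$, $Id(A) \subseteq Id(Pr(A))$ is immediate.

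The heart of the argument is the reverse inclusion, which I establish by induction on $\dim A$. Suppose $f$ is a multilinear nonidentity of $A$ and fix a nonzero admissible evaluation $\Phi$ with values $v_1, \ldots, v_n$; each $v_i$ lies in a Peirce piece $e_{\ell_i} A e_{r_i}$ (with $\ell_i = r_i$ when $v_i$ is semisimple). Setting $S_0 = \bigcup_i \{\ell_i, r_i\}$, every value already belongs to $T_{S_0}$, so $\Phi$ restricts to a nonzero evaluation of $f$ on $T_{S_0}$. If $S_0 \subsetneq \{1, \ldots, q\}$, then $J_{T_{S_0}} = e_{S_0} J_A e_{S_0} \subsetneq J_A$ gives $\dim T_{S_0} < \dim A$; the inductive hypothesis applied to $T_{S_0}$ (noting that $e_{S'} T_{S_0} e_{S'} = e_{S'} A e_{S'} = T_{S'}$ for $S' \subseteq S_0$) yields that $f$ is a nonidentity of some full $T_{S'}$, which is a summand of $Pr(A)$.

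It remains to handle the case $S_0 = \{1, \ldots, q\}$, where I will show $A$ itself is full, so that $A = T_{\{1, \ldots, q\}}$ is a summand. Pick a monomial $v_{\sigma(1)} \cdots v_{\sigma(n)}$ with nonzero $\Phi$-contribution, and enumerate $\{1, \ldots, q\}$ as $k_1, \ldots, k_q$ in the order of first visit of the Peirce-index chain of this product. The partial product between the first visits of $k_j$ and $k_{j+1}$ lies in $e_{k_j} A e_{k_{j+1}} = e_{k_j} J_A e_{k_{j+1}}$ (no semisimple part, since $k_j \neq k_{j+1}$), and is nonzero as a factor of a nonzero product; calling it $r_j$, the composite $r_1 r_2 \cdots r_{q-1}$ is nonzero for the same reason, and since $e_{k_j}$ is the identity of $A_{k_j}$ this composite lies in $A_{k_1} J_A A_{k_2} \cdots J_A A_{k_q}$, witnessing fullness of $A$. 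The main obstacle is this chain-extraction step: ensuring that the partial products between consecutive first-visits are genuine nonzero elements in the correct Peirce radical piece and that their composite does not vanish. The remainder is bookkeeping with the Peirce decomposition and the inductive hypothesis.
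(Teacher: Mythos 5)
Your construction takes corner subalgebras $T_S = e_S A e_S$ and sums the full ones, whereas the paper deletes one simple component at a time but keeps the whole radical, forming $\mathcal{A}_i = \langle A_1,\ldots,\widehat{A_i},\ldots,A_q; J_A\rangle$ and inducting on the number of simple components. These are genuinely different reductions. Your chain-extraction argument in the case $S_0=\{1,\ldots,q\}$ is correct and is a welcome spelling-out of a step the paper leaves implicit (the paper's phrase ``a full polynomial of $A$ which implies $A$ is full'' is exactly this Peirce-chain argument).

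There is, however, a real gap at the very first step. You assert that every admissible value $v_i$ lies in a Peirce piece $e_{\ell_i} A e_{r_i}$ with $\ell_i,r_i\in\{1,\ldots,q\}$, i.e.\ that $A=\bigoplus_{\ell,r\ge 1} e_\ell A e_r$. This holds only when $\sum_{i=1}^q e_i$ is a two-sided identity of $A$. For a general finite dimensional algebra there is an additional ``zero'' Peirce index: radical elements annihilated by $\sum e_i$ on one or both sides lie outside every corner $T_S$. Concretely, take $A=M_2(F)\times N$ where $N$ is a free noncommutative nilpotent algebra of class $4$. Then $q=1$, $e_1=1_{M_2}$, $e_1Ne_1=0$, so the only nonzero corner is $T_{\{1\}}=M_2(F)$ and your $Pr(A)=M_2(F)$. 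But $s_4$ is an identity of $M_2(F)$ and not of $N$, so $Id(Pr(A))\supsetneq Id(A)$ and condition (3) fails. (The proposition itself is still fine for this $A$: it is already full, so $Pr(A)=A$ is a valid presentation; it is your particular corner construction that breaks.) The paper sidesteps this entirely because each $\mathcal{A}_i$ contains all of $J_A$, so no radical element is ever dropped. Your argument can be salvaged by enlarging the index set to $\{0,1,\ldots,q\}$ with the Peirce pieces attached to the (possibly formal) complementary idempotent $e_0$, and including corners $T_S$ with $0\in S$ — note $(T_S)_{ss}$ is still a direct summand of $A_{ss}$ since $e_0$ contributes nothing semisimple — but as written the step ``every value lies in some $T_{S_0}$'' is false and the induction does not get started on such examples.
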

\begin{proof}
In fact the stronger statement follows from the construction in (\cite{AGPR} subsection $17.2.4.$). Let $A \cong A_{1}\times \cdots \times A_{q} \oplus J_{A}$ be the Wedderburn-Malcev decomposition. Clearly we may assume $A$ is not full. Consider the subalgebra $$\mathcal{A}_{i} = \langle A_{1}, \ldots, A_{i-1}, A_{i+1},\ldots, A_{q}; J_{A}\rangle.$$ We claim $A$ and $\mathcal{A}_{1} \oplus \cdots \oplus \mathcal{A}_{q}$ are PI-equivalent. Clearly $Id(A) \subseteq Id(\mathcal{A}_{1} \oplus \cdots \oplus \mathcal{A}_{q})$. For the converse if $f$ is a nonidentity of $A$, it must be a nonidentity of at least one $\mathcal{A}_{i}$ for otherwise it is a full polynomial of $A$ which implies $A$ is full contrary to our assumption. The proposition is then proved by induction.
\end{proof}

For any presentation $Pr(A)$ of $A$
we let $Pr(A)_{dim(ss)}$ be the set of tuples consisting of the dimensions of the simple components that appear in the different full algebras of $Pr(A)$ and denote by $Pr(A)_{dim(ss), max}$ the set of maximal tuples in $Pr(A)_{dim(ss)}$ with respect to covering.

\begin{corollary}\label{maximal tuples with respect to covering are equal}
The set $Pr(A)_{dim(ss), max}$ depends on $A$ but not on the presentation $Pr(A)$. Hence we can denote the set $Pr(A)_{dim(ss), max}$ by $\mathcal{A}_{dim(ss), max}$.
\end{corollary}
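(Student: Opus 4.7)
The plan is to prove the two inclusions $Pr(A)_{dim(ss),\max} \subseteq Pr'(A)_{dim(ss),\max}$ and vice versa for any two presentations $Pr(A) = T_{1}\oplus\cdots\oplus T_{n}$ and $Pr'(A) = S_{1}\oplus\cdots\oplus S_{m}$ of $A$ by full algebras, using the preceding Corollary (which gives strongly full polynomials of a full algebra $T$ that vanish simultaneously on a finite family of full algebras none of which cover $T$) together with the antisymmetry of the covering relation recorded in Note \ref{antysymmetric and majorization}.

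First I would fix a maximal tuple $p_{T_{i}} \in Pr(A)_{dim(ss),\max}$ and show that some $p_{S_{j}}$ covers $p_{T_{i}}$. Suppose for contradiction that no $S_{j}$ covers $T_{i}$. Then by the Corollary there exists a strongly full polynomial $f_{T_{i}}$ of $T_{i}$ (in particular, a nonidentity of $T_{i}$) which vanishes on $S_{1}\oplus\cdots\oplus S_{m} = Pr'(A)$. Since both presentations are PI-equivalent to $A$, we have
\[
Id(T_{i}) \supseteq Id(Pr(A)) = Id(A) = Id(Pr'(A)),
\]
so $f_{T_{i}}$ must be an identity of $T_{i}$, a contradiction. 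Hence some $p_{S_{j}}$ covers $p_{T_{i}}$.

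Next I would apply the symmetric argument with the roles of the two presentations swapped, starting from the full algebra $S_{j}$: since $S_{j}$ is a direct summand of $Pr'(A)$ and $Pr(A)$ is PI-equivalent to $Pr'(A)$, the same reasoning yields some $T_{k}$ with $p_{T_{k}}$ covering $p_{S_{j}}$. Covering is transitive on tuples, so $p_{T_{k}}$ covers $p_{T_{i}}$. The maximality of $p_{T_{i}}$ in $Pr(A)_{dim(ss)}$ forces $p_{T_{i}}$ to cover $p_{T_{k}}$ as well, and by antisymmetry (Note \ref{antysymmetric and majorization}) $p_{T_{i}} = p_{T_{k}}$. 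Substituting back, $p_{T_{i}}$ covers $p_{S_{j}}$, and combined with $p_{S_{j}}$ covers $p_{T_{i}}$, antisymmetry gives $p_{S_{j}} = p_{T_{i}}$. Thus $p_{T_{i}} \in Pr'(A)_{dim(ss)}$.

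Finally, I would verify that $p_{T_{i}} = p_{S_{j}}$ is in fact maximal inside $Pr'(A)_{dim(ss)}$: if some $p_{S_{l}}$ covers it, rerun the argument to produce $p_{T_{r}}$ covering $p_{S_{l}}$ and hence covering $p_{T_{i}}$; maximality in $Pr(A)_{dim(ss)}$ and antisymmetry force $p_{T_{r}} = p_{T_{i}}$, and then $p_{S_{l}}$ is squeezed to equal $p_{T_{i}}$ as well. This gives $Pr(A)_{dim(ss),\max} \subseteq Pr'(A)_{dim(ss),\max}$; the reverse inclusion is identical by symmetry. The only real subtlety is making sure to use throughout that a strongly full polynomial is automatically a nonidentity, and to treat tuples as equal up to padding by zeros so that antisymmetry of the covering relation really does identify the two tuples, rather than merely relate them.
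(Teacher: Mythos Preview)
Your proof is correct and follows essentially the same approach as the paper: both arguments hinge on the preceding Corollary to produce a strongly full polynomial of a full summand that vanishes on every full summand of the other presentation, yielding a contradiction with PI-equivalence. The paper's proof compresses your back-and-forth into a single ``exchange the roles of $P_{1}$ and $P_{2}$'' step, whereas you spell out the ping-pong explicitly (find $S_{j}$ covering $T_{i}$, then $T_{k}$ covering $S_{j}$, use maximality and antisymmetry); your version is more careful but not genuinely different in strategy.
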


\begin{proof}

Suppose the contrary holds. Let $P_{1}$ and $P_{2}$ be presentations of $A$ as above. Then without loss of generality there exists a full subalgebra $M$ of $P_{1}$ whose tuple is maximal and does not appear as a maximal tuple in $P_{2}$. We may assume $M$ is not covered by tuples of $P_{2}$ for otherwise $M$ is strictly covered by a tuple of $P_{2}$ and in that case we may exchange the roles of $P_{1}$ and $P_{2}$.
Now, by the lemma, there exists a nonidentity polynomial of $M$ which is an identity of every full subalgebra of $P_{2}$ and the claim is proved.

\end{proof}

In the next lemma we show we can \textit{fuse} finite dimensional algebras $A$ and $B$ with isomorphic semisimple subalgebras. More generally, suppose the semisimple subalgebra of $A$ is a direct summand of the semisimple subalgebra of $B$. We claim $A \times B$ is PI-equivalent to an algebra of the form $B_{ss} \oplus \hat{J}$. Yet more generally, suppose $A$ and $B$ have a common semisimple component $U$ (up to isomorphism), then there exists an algebra $C$, $PI$ equivalent to $A \times B$ in which the semisimple algebra isomorphic to $U$ appears in $C$ only once. Here is the precise statement.

\begin{lemma}\label{fusion lemma}
Let $A_{1}\times \cdots \times A_{q} \oplus J(A)$ and $B_{1}\times \cdots \times B_{r} \oplus J(B)$ be the Wedderburn-Malcev decompositions of $A$ and $B$ respectively. Suppose $A_{1} \times \cdots \times A_{k} \cong B_{1} \times \cdots \times B_{k} \cong U$. Then $A \times B$ is PI-equivalent to $C = U \times A_{k+1} \times \cdots \times A_{q} \times B_{k+1} \times \cdots \times B_{r} \oplus J(A) \oplus J(B)$

\end{lemma}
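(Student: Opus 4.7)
The plan is to realize $C$ explicitly as a subalgebra of $A\times B$ in which the two copies of $U$ have been identified diagonally. Fix algebra isomorphisms $\alpha_{i}\colon A_{i}\to B_{i}$ for $i=1,\ldots,k$, and let
\[
\iota_{A}\colon U\hookrightarrow A,\qquad \iota_{B}\colon U\hookrightarrow B
\]
be the induced unital embeddings of $U=A_{1}\times\cdots\times A_{k}\cong B_{1}\times\cdots\times B_{k}$ onto the first $k$ simple factors of $A$ and $B$ respectively. Define
\[
C' \;=\; \{\,(\iota_{A}(u)+a'+j_{A},\,\iota_{B}(u)+b'+j_{B})\;:\; u\in U,\; a'\in A_{k+1}\times\cdots\times A_{q},\; b'\in B_{k+1}\times\cdots\times B_{r},\; j_{A}\in J(A),\; j_{B}\in J(B)\,\}
\]
as a subspace of $A\times B$.

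First I would verify that $C'$ is a subalgebra. The only nontrivial check involves products of the diagonal $U$-component with the remaining semisimple and radical parts. Orthogonality of the central idempotents of distinct simple components of $A$ (respectively $B$) kills all such cross terms except those landing again in $J(A)$ or $J(B)$, which belong to $C'$ by construction. Next I would identify the Wedderburn-Malcev decomposition of $C'$: the diagonal copy of $U$, together with $A_{k+1}\times\cdots\times A_{q}$ (embedded via $(a'\mapsto (a',0))$) and $B_{k+1}\times\cdots\times B_{r}$ (embedded via $(b'\mapsto (0,b'))$), form a semisimple supplement, and $J(A)\oplus J(B)$ is a nilpotent ideal with semisimple quotient. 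This identifies $C'\cong C$ as $F$-algebras.

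For the PI-equivalence, the inclusion $C'\hookrightarrow A\times B$ yields $\mathrm{Id}(A\times B)\subseteq \mathrm{Id}(C')$ automatically. For the reverse inclusion, consider the two projections $\pi_{A}\colon C'\to A$ and $\pi_{B}\colon C'\to B$ onto the first and second coordinates of $A\times B$. Each projection is surjective: any element $\iota_{A}(u)+a'+j_{A}\in A$ arises as $\pi_{A}(\iota_{A}(u)+a'+j_{A},\,\iota_{B}(u))\in\pi_{A}(C')$, and symmetrically for $\pi_{B}$. Therefore $\mathrm{Id}(C')\subseteq \mathrm{Id}(A)\cap\mathrm{Id}(B)=\mathrm{Id}(A\times B)$, which gives the desired equality.

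The only step requiring any care is the verification that $C'$ is closed under multiplication together with the correct identification of its semisimple and radical parts; everything else is immediate from the fact that a homomorphic image of an algebra satisfies all of its identities. So I do not expect a genuine obstacle here — the lemma is essentially a structural observation made precise by the explicit diagonal subalgebra $C'$.
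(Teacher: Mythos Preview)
Your proof is correct and follows essentially the same approach as the paper: realize $C$ as the diagonal subalgebra of $A\times B$ obtained by identifying the two copies of $U$, then use this to compare identities. The only cosmetic difference is that for the inclusion $\mathrm{Id}(C)\subseteq \mathrm{Id}(A)\cap\mathrm{Id}(B)$ you use the surjective projections $C'\twoheadrightarrow A,B$, whereas the paper observes that $A$ and $B$ sit inside $C$ as subalgebras; both arguments are valid and equally short.
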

\begin{proof}
We consider the vector space embedding 
$$C = U \times A_{k+1} \times \cdots \times A_{q} \times B_{k+1} \times \cdots \times B_{r} \oplus J(A) \oplus J(B)$$ $$\hookrightarrow  [U\times  A_{k+1} \times \cdots \times A_{q} \oplus J(A)]\times [U\times B_{k+1}\times \cdots \times B_{r} \oplus J(B)]$$
where the elements of $U$ are mapped diagonally. It is easy to see the image is closed under multiplication yielding an algebra structure on $C$. As for the polynomial identities the above embedding (now, as algebras) yields $Id(C) \supseteq Id(A \times B) = Id(A) \cap Id(B)$. On the other hand the algebras $A$ and $B$ are embedded in $C$ and the result follows.

\end{proof}

Let $\Gamma$ be a $T$-ideal as above and fix a presentation $P_{1}$ of $\Gamma$. We shall present a sequence of operations on $P_{1}$ which will lead to the proof of Theorem \ref{uniquely determined affine}. Note that Theorem \ref{uniquely determined affine} is clear if $\Gamma = Id(A)$ where $A$ is nilpotent. So we shall assume whenever it is necessary that $A$ is non-nilpotent.

Let $P_{1}$ and $P_{2}$ be presentations of $A$ and let $P_{1, ext}$ and $P_{2, ext}$ be the corresponding direct sum of full algebras whose semisimple subalgebra is maximal with respect to \textit{covering}. Following the lemma above we may fuse full algebras $T_{1}$  and $T_{2}$ with isomorphic semisimple parts $(T_{1})_{ss} \cong (T_{2})_{ss} \cong U$, where $T_{1}$ appears in $P_{1, ext}$ and $T_{2}$ appears in $P_{2, ext}$. More precisely (notation as above).

Let $P_{1}$ and $P_{2}$ be presentations of $A$ and suppose $T_{1} \cong (T_{1})_{ss} \oplus J_{T_{1}}$ (resp. $T_{2} \cong (T_{2})_{ss} \oplus J_{T_{2}}$) is a full subalgebra of $P_{1}$ (resp. $P_{2}$). Suppose $(T_{1})_{ss} \cong T_{2})_{ss} \cong U$ and replace $T_{1} \cong U \oplus J_{T_{1}}$ by $T'_{1} = U \oplus J_{T_{1}} \oplus J_{T_{2}}$. We note that $T'_{1}$ is full (since $T_{1}$ is) and denote by $P'_{1}$ the corresponding decomposition into full algebras.

\begin{lemma}\label{PI equivalence after fusion}
The algebra $P'_{1}$ is a presentation of $A$.
\end{lemma}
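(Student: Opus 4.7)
The plan is to verify the two things required for \textit{presentation}: that $P'_1$ is a direct sum of full algebras, and that it is PI-equivalent to $A$. Write $P_1 = T_1 \oplus S$, where $S$ is the direct sum of the remaining full summands of $P_1$, so that $P'_1 = T'_1 \oplus S$ by construction. For fullness of $T'_1$: the embedding $T_1 \hookrightarrow T'_1$ (extended to $J_{T_2}$ by the zero action) identifies $T_1$ as a subalgebra of $T'_1$ with the same semisimple part $U$, so any witness chain $U_1 J_{T_1} U_2 J_{T_1} \cdots J_{T_1} U_k \neq 0$ in $T_1$ also lives in $T'_1$, giving fullness. The remaining summands of $P'_1$ are full by hypothesis on $P_1$.

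For the PI-equivalence, the key input is the fusion lemma (Lemma \ref{fusion lemma}). Since $T_1$ and $T_2$ share the common semisimple part $U$, it yields that $T_1 \times T_2$ is PI-equivalent to $U \oplus J_{T_1} \oplus J_{T_2} = T'_1$. Therefore
$$Id(P'_1) = Id(T'_1) \cap Id(S) = Id(T_1 \times T_2) \cap Id(S) = Id(T_2) \cap Id(T_1) \cap Id(S) = Id(T_2) \cap Id(P_1).$$
Since $P_1$ is a presentation of $A$ we have $Id(P_1) = Id(A)$, and since $T_2$ is a direct summand of the presentation $P_2$ we have $Id(A) = Id(P_2) \subseteq Id(T_2)$. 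Hence $Id(P'_1) = Id(T_2) \cap Id(A) = Id(A)$, as desired.

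The whole argument is essentially bookkeeping once the fusion lemma is in hand; the only conceptual point is the observation that, because $P_1$ and $P_2$ are presentations of the \emph{same} algebra $A$, the identities of $T_2 \subseteq P_2$ are already contained in $Id(A) = Id(P_1)$, which is precisely what allows the contribution of $T_2$ to be absorbed for free when $T_1$ is enlarged to $T'_1$. There is no real obstacle here — the lemma is designed to be a direct corollary of the fusion construction and will serve as the atomic step in the iterative process by which all presentations of $\Gamma$ are amalgamated to produce the unique minimal semisimple $U$ of Theorem \ref{uniquely determined affine}.
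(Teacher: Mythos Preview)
Your proof is correct and follows essentially the same route as the paper. The paper's argument is terser --- it just notes $Id(T_1),Id(T_2)\supseteq Id(A)$ (hence $Id(P'_1)\supseteq Id(A)$, implicitly via the fusion lemma) and $Id(P'_1)\subseteq Id(P_1)=Id(A)$ (via the obvious embedding $P_1\hookrightarrow P'_1$) --- but your explicit chain $Id(P'_1)=Id(T_2)\cap Id(P_1)=Id(A)$ unpacks exactly the same two containments.
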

\begin{proof}
Because $Id(T_{1}), Id(T_{2}) \supseteq Id(A)$ we have that $Id(P'_1) \supseteq Id(A)$. On the other hand $Id(P'_1) \subseteq Id(P_{1})$ ($= Id(A))$ and the result follows.
\end{proof}

\begin{remark}
Note that fusion of fundamental algebras $A$ and $B$ with isomorphic semisimple subalgebras yields a fundamental algebra (see \cite{AGPR} for the definition of fundamental algebras).
\end{remark}

\end{section}

Let $\Gamma$ be the $T$-ideal of identities of a finite dimensional algebra. Denote by $\mathcal{M}_{\Gamma}$ the family of presentations $A = A_{\{T_{1},\ldots, T_{n}\}}$ of $\Gamma$.

We perform the following operations on $A \in \mathcal{M}_{\Gamma}$.

\begin{definition}\label{partitions realized}
We let $A_{part}$ be the multi-set (i.e. repetitions are allowed) of unordered tuples realized by the dimensions of the simple components of semisimple subalgebras of the full algebras appearing in the decomposition of $A$. Alternatively, we may think of $A_{part}$ as the set of semisimple algebras appearing in the full algebras, summands of $A$.
\end{definition}

Step ${0}$ (deletion): Let $A\in \mathcal{M}_{\Gamma}$. We delete from $A$ full algebras that do not alter $Id(A)$. Let $A_{i}$ be a full subalgebra of $A$. Denote by $\widehat{A_{i}}$ the summand of $A$ consisting the direct sum of full algebra $A_{j}$, $j \ne i$. Then, we delete $A_{i}$ from the direct sum if $Id(A_{i}) \supseteq Id(\widehat{A_{i}}) = \cap_{j \ne i} Id(A_{j})$. We abuse notation and simply write the outcome by $F_{0}(A)$, an operation of type $0$ on $A$, although the operation depends on the choice of the full algebra $A_{i}$. We write $A = A_{red_0}$ if \textit{every} operation of type $0$ on $A$ is the identity.

Step ${1}$ (fusion): $A\in \mathcal{M}_{\Gamma}$ and suppose $A = A_{red_0}$. We fuse full subalgebras with isomorphic semisimple subalgebras. More generally, if $A_{i}$ and $A_{j}$ $i \neq j$, are full subalgebras of $A$ and $(A_{i})_{ss}$ is a direct summand of $(A_{j})_{ss}$, then the operation $F_{1} = (F_{1})_{A_{i}, A_{j}}$ on $A$ is the fusion of $A_{i}$ and $A_{j}$. We abuse notation and simply write the outcome by $F_{1}(A)$, an operation of type $1$ on $A$, although the operation depends on the choice of the full algebras $A_{i}$ and $A_{j}$. We write $A = A_{red_{0,1}}$ if every operation of type $0$ or $1$ on $A$ is the identity.

We come now to a step where we \textit{decompose} full algebras.

Step $2$ (decomposition): Let $A\in \mathcal{M}_{\Gamma}$ and suppose that $A = A_{red_{0,1}}$. We define an operation of type $2$ on $A$, denoted by $F_{2}$, as follows. Choose a full algebra $Q$ appearing in the decomposition of $A$ into full algebras and let $A_{supp(Q)} = (\hat{Q})_{1} \oplus \cdots \oplus (\hat{Q})_{n}$ be the supplement of $Q$ in $A$. Note that since $A = A_{red_{0,1}}$ there is no full algebra component of $A_{supp(Q)}$ with semisimple part $\cong Q_{ss}$. Suppose there exists a weakly full polynomial $p$ of $Q$ which vanishes on $A_{supp(Q)}$.
In that case we leave the algebra $A$ unchanged, that is $F_{2}(A) = A$.
Otherwise we proceed as follows:

Clearly $Q$ is not nilpotent because $A = A_{red_{0,1}}$. Let us treat the case where $Q_{ss}$ is simple separately. If $Q_{ss}$ is simple and every weakly full polynomial of $Q$ is a nonidentity of $A_{supp(Q)}$ we claim $Id(A) = Id(A_{supp(Q)} \oplus J_{Q})$ where $J_{Q}$ is the radical of $Q$. It is clear that $Id(A) \subseteq Id(A_{supp(Q)} \oplus J_{Q})$. Conversely, suppose $p$ is a nonidentity of $A$. If $p$ is a nonidentity of $A_{supp(Q)}$ it is also a nonidentity of $Id(A_{supp(Q)} \oplus J_{Q})$ as needed, so let us assume $p$ is an identity of $A_{supp(Q)}$. In that case $p$ must be a nonidentity of $Q$ and an identity of $Q_{ss}$. It follows that $p$ is a nonidentity of $J_{Q}$ and we are done.
Let $Q \cong \Delta_{1} \times \cdots \times \Delta_{q} \oplus J_{Q}$ be the Wedderburn-Malcev decomposition of $Q$, where now $q > 1$.
We are assuming every weakly full polynomial of $Q$ is a nonidentity of $A_{supp(Q)}$. In that case we claim the following.

\begin{claim} We can replace the full subalgebra $Q$ of $A$ by a direct sum of full subalgebras $\mathcal{Q}_{1} \oplus \ldots \oplus \mathcal{Q}_{q}$, where for each $i = 1,\ldots,q$, the semisimple algebra $(\mathcal{Q}_{i})_{ss}$ is a proper summand of $Q_{ss}$ $($in particular strictly covered by $Q$$)$ and if $\bar{A}$ denotes the algebra obtained, we have $Id(\bar{A}) = Id(A) = \Gamma$.
\end{claim}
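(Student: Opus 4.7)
The plan is as follows. Let $Q\cong \Delta_{1}\times\cdots\times \Delta_{q}\oplus J_{Q}$ with $q>1$, and for each $i\in\{1,\ldots,q\}$ introduce the subalgebra
$$\mathcal{Q}_{i}=\langle \Delta_{1},\ldots,\widehat{\Delta_{i}},\ldots,\Delta_{q};J_{Q}\rangle\subseteq Q,$$
where the hat indicates omission. Clearly $(\mathcal{Q}_{i})_{ss}=\prod_{j\neq i}\Delta_{j}$ is a proper direct summand of $Q_{ss}$, so the associated tuple is strictly covered by $p_{Q}$. I would set
$$\bar{A}=\mathcal{Q}_{1}\oplus\cdots\oplus \mathcal{Q}_{q}\oplus A_{supp(Q)}$$
and prove $Id(\bar{A})=Id(A)$. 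Should any $\mathcal{Q}_{i}$ fail to be full, the earlier proposition on presentations by full subalgebras would be invoked to refine it into a direct sum of full subalgebras whose semisimple parts are summands of $(\mathcal{Q}_{i})_{ss}$, still proper in $Q_{ss}$, without disturbing the PI-equivalence.

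The inclusion $Id(A)\subseteq Id(\bar{A})$ is immediate: each $\mathcal{Q}_{i}$ embeds into $Q$, whence $Id(Q)\subseteq Id(\mathcal{Q}_{i})$, so
$$Id(A)=Id(Q)\cap Id(A_{supp(Q)})\subseteq Id(\mathcal{Q}_{i})\cap Id(A_{supp(Q)})\quad\text{for every }i,$$
and intersecting over $i$ yields $Id(A)\subseteq Id(\bar{A})$.

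For the reverse inclusion I would argue by contradiction: assume $p\in Id(\bar{A})\setminus Id(A)$. Since $p$ vanishes on $A_{supp(Q)}$, it must be a nonidentity of $Q$. The key observation is that every nonzero admissible evaluation of $p$ on $Q$ necessarily uses a value from every $\Delta_{i}$; for otherwise, if no variable of $p$ is assigned a value in $\Delta_{i}$, then all semisimple values lie in $\bigoplus_{j\neq i}\Delta_{j}$ and all radical values lie in $J_{Q}\subseteq \mathcal{Q}_{i}$, so the evaluation is an admissible evaluation on $\mathcal{Q}_{i}$, which vanishes by $p\in Id(\mathcal{Q}_{i})$. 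Consequently $p$ is a weakly full polynomial of $Q$. But by the standing hypothesis of this case, every weakly full polynomial of $Q$ is a nonidentity of $A_{supp(Q)}$, contradicting $p\in Id(\bar{A})\subseteq Id(A_{supp(Q)})$.

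The only genuine subtlety lies in the fullness requirement on the summands, since a subalgebra of the form $\mathcal{Q}_{i}$ need not inherit fullness from $Q$: the chain $\Delta_{\sigma(1)}\cdot J_{Q}\cdot \Delta_{\sigma(2)}\cdots J_{Q}\cdot \Delta_{\sigma(q)}\neq 0$ guaranteed by the fullness of $Q$ may collapse after deleting the factor $\Delta_{i}$. This is the main obstacle, and it is resolved in a cosmetic but necessary final step by applying the earlier proposition to each $\mathcal{Q}_{i}$: this replaces $\mathcal{Q}_{i}$ (up to PI-equivalence) by a direct sum of full subalgebras whose semisimple components are direct summands of $(\mathcal{Q}_{i})_{ss}$, and hence proper summands of $Q_{ss}$, while preserving the equality $Id(\bar{A})=\Gamma$ established above.
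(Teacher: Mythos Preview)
Your argument is correct and follows essentially the same route as the paper: define $\mathcal{Q}_{i}$ by deleting $\Delta_{i}$ while keeping $J_{Q}$, and prove the PI-equivalence via the (contrapositive) observation that a nonidentity of $Q$ vanishing on every $\mathcal{Q}_{i}$ would have to be weakly full, contradicting the standing hypothesis. Your closing remark that the $\mathcal{Q}_{i}$ need not themselves be full and must be refined via the earlier proposition is a point the paper leaves implicit.
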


\begin{proof}
Consider the algebras $\mathcal{Q}_{i}$, $i=1,\ldots,q$, obtained from $Q$ by deleting one simple component $\Delta_{i}$ and keeping the radical unchanged. We claim
$A$ is PI-equivalent to $A_{supp(Q)} \oplus \mathcal{Q}_{1} \oplus \cdots \oplus \mathcal{Q}_{q}$. Indeed, it is clear that every identity of the former algebra vanishes on the latter one. Conversely, let $p$ be a nonidentity of the former one. We show it does not vanish on the latter. Clearly, we may assume $p$ vanishes on $A_{supp(Q)}$
and so, by assumption $p$ is not a weakly full polynomial of $Q$. This means that $p$ has no nonzero evaluation on $Q$ which visits all simple components of $Q$ and being a nonidentity of $Q$ it must be a nonidentity of $\mathcal{Q}_{i}$ for some $i$ and hence a nonidentity of the latter.
\end{proof}

We write $A = A_{red_{0,1,2}}$ if any operation of type $0$, $1$ or $2$ on $A$ is the identity.

Similarly to our notation for the operations $F_{0}$ and $F_{1}$ above we abuse notation here and simply write $F_{2}(A) = F_{2, Q}(A)$.

Step ${3}$ (absorption): Fix a presentation $A \in \mathcal{M}_{\Gamma}$ and suppose $A = A_{red_{0,1,2}}$. Let $B \in \mathcal{M}_{\Gamma}$. Roughly speaking the operation
$F^{cond}_{3}$ consists in replacing a full subalgebra $Q$ of $A$ with the fusion of $Q$ with certain full subalgebras of $B$. More precisely, choose a full subalgebra $Q$ of $A$ and a full subalgebra $V$ of $B$ such that $V_{ss}$ is a direct summand of (possibly isomorphic to) $Q_{ss}$. Then replace the full subalgebra $Q$ in $A$ by the fusion of $Q$ and $V$. We denote the outcome by $(F^{cond}_{3})_{B, Q, V}(A)$ or simply by $(F^{cond}_{3})(A)$. The superscript \textit{cond} means that this operation is conditional. We define $(F_{3})_{B, Q, V}(A)$ as follows. Let $A^{cond} = (F^{cond}_{3})_{B, Q, V}(A)$. If $A^{cond} = (A^{cond})_{red_{0,1,2}}$, we set $(F_{3})_{B, Q, V}(A) = A$, otherwise we set $(F_{3})_{B, Q, V}(A) = A^{cond}$.

\begin{remark}
The point for introducing the conditional operation is that we want an operation of type $3$ to be nontrivial only if operation $0$, $1$ or $2$ have a real effect on $A^{cond}$. This is to prevent the radical grows indefinitely.
\end{remark}

We write $A = A_{red_{0,1,2,3}}$ if any operation of type $0$, $1$, $2$ or $3$ on $A$ is the identity.

Let us describe now the procedure applied to $A \in \mathcal{M}_{\Gamma}$.
\begin{enumerate}
\item
Apply operations of type $0$ on $A$ until any additional operation of type $0$ acts as an identity. Denote the outcome by $A'$.
\item
If there exists an operation of type $1$ with $F_{1}(A') \ne A'$, we apply $F_{1}$ on $A'$ and return to step $0$ with $A := F_{1}(A')$. We continue until we get an algebra $A''$ such that $F_{\epsilon}(A'') = A''$, $\epsilon = 0,1$.

\item
If there exists an operation of type $2$ with $F_{2}(A'') \ne A''$, we apply $F_{2}$ on $A''$ and return to step $0$. We continue until we get an algebra $A'''$ such that $F_{\epsilon}(A''') = A'''$, $\epsilon = 0,1,2$.

\item
If there exists an operation of type $3$ with $F_{3}(A''') \ne A'''$, we apply $F_{3}$ on $A'''$ and return to step $0$. We continue until we get an algebra $A''''$ such that $F_{\epsilon}(A'''') = A''''$, $\epsilon = 0,1,2,3$.

\end{enumerate}

\begin{theorem}\label{algebra after steps 0-3}

For every presentation $A \in \mathcal{M}_{\Gamma}$ the process above stops. In particular, given a presentation $A$, applying operations of type $0-3$ we obtain a presentation $\mathcal{A}\in \mathcal{M}_{\Gamma}$ such that $\mathcal{A} = \mathcal{A}_{red0,1,2,3}$.

\end{theorem}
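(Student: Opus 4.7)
The plan is to equip the algorithm with a well-founded measure $\mu$ and show that every nontrivial operation either strictly decreases $\mu$ or forces a subsequent operation that does; termination then follows from well-foundedness. Set $\mu(A):=A_{part}$, the multiset of tuples $p_{T_i}$ extracted from the Wedderburn--Malcev decompositions of the full summands $T_i$ of the presentation $A$, partially ordered by the Dershowitz--Manna multiset extension of the strict covering relation on tuples. The first observation I would establish is that the tuples appearing in $\mu(A)$ at any point of the procedure lie in a fixed finite poset: since each full summand satisfies $\Gamma$ and $\Gamma$ contains a Capelli polynomial $c_n$, every entry of every tuple is bounded by $(n-1)^2$, and the number of entries per tuple is bounded by the maximum length occurring in the initial $A_{part}$, because $F_0$ and $F_1$ only delete tuples, $F_2$ replaces a tuple by strict subtuples (with strictly fewer entries), and $F_3$ leaves $A_{part}$ unchanged. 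Covering is transitive and antisymmetric (Note \ref{antysymmetric and majorization}), so on this finite set of tuples strict covering is a well-founded strict partial order, and its Dershowitz--Manna multiset extension is well-founded too.

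Next I would verify strict $\mu$-decrease under each unconditional operation. $F_0$ removes a full summand and hence deletes its tuple from $A_{part}$. For $F_1$, fusing $A_i$ into $A_j$ with $(A_i)_{ss}$ a direct summand of $(A_j)_{ss}$, Lemma \ref{fusion lemma} yields a fused algebra with semisimple part isomorphic to $(A_j)_{ss}$, so the multiset loses $p_{A_i}$ while retaining $p_{A_j}$. For $F_2$, the tuple $p_{Q}$ is replaced by the $q$ tuples obtained from $p_{Q}$ by deleting one simple component of $Q_{ss}$ at a time, each of which is strictly covered by $p_{Q}$ since its weight is strictly smaller (if one of the resulting $\mathcal{Q}_i$ is not itself full, we decompose it further into full algebras whose semisimple parts are direct summands of $(\mathcal{Q}_i)_{ss}$, again strictly covered by $p_{Q}$). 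In all three cases $\mu$ strictly decreases in the Dershowitz--Manna order, and by Lemma \ref{PI equivalence after fusion} and the PI-equivalence analyses carried out with each step, the resulting presentation remains in $\mathcal{M}_{\Gamma}$.

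The crux is handling $F_3$. Applying Lemma \ref{fusion lemma} to $Q$ and $V$ with $V_{ss}$ a direct summand of $Q_{ss}$ shows that the fused subalgebra again has semisimple part isomorphic to $Q_{ss}$, so $F_3$ preserves both $A_{part}$ and $\mu$; an analogous identity-chasing argument (using $Id(V)\supseteq \Gamma$) shows the output remains in $\mathcal{M}_{\Gamma}$. However, by the very definition of the conditional operation $F_3(A)\neq A$ exactly when $A^{cond}$ fails to be $(A^{cond})_{red_{0,1,2}}$; thus every nontrivial $F_3$ immediately enables a nontrivial $F_0$, $F_1$, or $F_2$, which then strictly decreases $\mu$. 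Combining these facts, only finitely many strict $\mu$-decreases are possible in total, so the algorithm must stabilize at a presentation $\mathcal{A}\in\mathcal{M}_{\Gamma}$ satisfying $F_\epsilon(\mathcal{A})=\mathcal{A}$ for every $\epsilon\in\{0,1,2,3\}$, i.e.\ $\mathcal{A}=\mathcal{A}_{red_{0,1,2,3}}$. The main obstacle is precisely the bookkeeping for $F_3$: one must rule out indefinite radical growth under repeated absorptions, and this is secured by the invariance of $A_{part}$ under $F_3$ combined with its conditional definition.
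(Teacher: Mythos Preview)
Your argument is correct and terminates the procedure via a genuinely different well-founded measure than the paper's. The paper assigns to each tuple $\sigma = (\sigma_1,\ldots,\sigma_r)$ the explicit weight $n_\sigma = 2^{r^2}\sum_i \sigma_i$, sets $n_A = \sum_{\sigma \in A_{part}} n_\sigma$, and orders presentations by the lexicographic pair $\Theta_A=(n_A, r_A)$ where $r_A$ is the number of full summands; the key computation is that $F_2$ replaces a length-$m$ tuple by $m$ tuples of length $m-1$, contributing $(m-1)2^{(m-1)^2}\sum_i\sigma_i < 2^{m^2}\sum_i\sigma_i$. You instead invoke the Dershowitz--Manna multiset extension of strict covering, which is more conceptual and sidesteps the ad hoc choice of $2^{r^2}$ (any $f$ with $(r-1)f(r-1)<f(r)$ works for the paper, whereas your route makes no such choice at all). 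Your handling of $F_3$ is also more explicit than the paper's, which stops at $\epsilon=0,1,2$ and leaves the reader to infer that $F_3$ preserves $A_{part}$ and, when nontrivial, forces a subsequent $F_{\le 2}$-decrease; you spell this out. Two minor points to tighten: Note~\ref{antysymmetric and majorization} records only antisymmetry, not transitivity, so you should verify transitivity of covering directly (just compose the two partitions of the index sets); and your Capelli bound on the entries is loose (if $c_n\in\Gamma$ then each simple component has dimension at most $n-1$, not $(n-1)^2$), though any bound suffices---indeed the operations never introduce a simple component absent from the initial $A_{part}$, so finiteness of the relevant set of tuples follows without invoking Capelli at all.
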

\begin{proof}

For the proof we shall introduce the following \textit{numerical counters}.
\begin{enumerate}
\item

Let $A\in \mathcal{M}_{\Gamma}$. We denote by $r_{A}$ the number of full subalgebras in the presentation of $A$.

\item

Let $A\in \mathcal{M}_{\Gamma}$ and let $A_{part}$ be the corresponding multi-set of unordered tuples (see Definition \ref{partitions realized}). If $\sigma = (\sigma_{1}, \ldots, \sigma_{r}) \in A_{part}$ is a tuple, we let $n_{\sigma} = 2^{r^2}\sum_{i}\sigma_{i}$ be the weight of $\sigma$. Note that the function $f(r) = 2^{r^2}$ satisfies the condition $(r-1)f(r-1) < f(r)$, a condition that will be used later. We denote $n_{A} = n_{A_{part}} = \sum_{\sigma \in A_{part}}n_{\sigma}$ the \textit{weight} of $A$. Let $d$ be an arbitrary (nonnegative) integer. Observe that the number of tuples $\sigma$ with $n_{\sigma} \leq d$ is finite and hence the number of multi-sets $A_{part}$ corresponding to $A\in \mathcal{M}_{\Gamma}$ with $n_{A_{part}} \leq d$ is finite as well.

\end{enumerate}
\begin{lemma}
The following hold.
\begin{enumerate}

\item
Let $A \in \mathcal{M}_{\Gamma}$ and let $\bar{A} = F_{\epsilon}(A)$, $\epsilon = 0,1$. If $\bar{A} \neq A$ then $r_{\bar{A}} < r_{A}$ and $n_{\bar{A}} \leq n_{A}$.
\item

Let $A \in \mathcal{M}_{\Gamma}$ and suppose $A = A_{red  0, 1}$. Let $\bar{A} = F_{2}(A)$. If $\bar{A} \neq A$ then $n_{\bar{A}} < n_{A}$.

\end{enumerate}

\end{lemma}

\begin{proof}
The first part is clear since we are suppressing a full subalgebra of the presentation of $A$. Note that if we are suppressing a nilpotent algebra $n_{\bar{A}} = n_{A}$. For the proof of the second part let $A = A_{red 0, 1}\in \mathcal{M}_{\Gamma}$. This implies no full subalgebras of $A$ are nilpotent unless $A$ is nilpotent, a case we have already addressed (see paragraph above Lemma \ref{PI equivalence after fusion}). Suppose $F_{2}(A) \neq A$. This means that one tuple $\sigma = (\sigma_{1},\ldots, \sigma_{m})$, $m \geq 1$ is replaced by $m$ tuples each of which has length $m-1$ and is obtained from $\sigma$ by deleting $\sigma_{i}$, $i = 1,\ldots,m$. It follows that the the quantity $2^{(m^{2})}\sum_{i}\sigma_{i}$, the contribution of $\sigma$ to $n_{A}$, is replaced by $(m-1)2^{((m-1)^{2})}\sum_{i}\sigma_{i}$. As $(m-1)2^{((m-1)^{2})} <  2^{(m^{2})}$, the result follows.
\end{proof}

In order to complete the proof of the theorem we consider the pairs $\Theta_{A} = (n_{A}, r_{A})$, $A \in \mathcal{M}_{\Gamma}$ with the lexicographic order $\preceq$ (and $\prec$ if the inequality is strict). Let $\bar{A} = F_{\epsilon}(A)$, $\epsilon = 0,1,2$. It follows that if $\bar{A} \neq A$, invoking the lemma above, we have $\Theta_{\bar{A}} \prec \Theta_{A}$. This completes the proof of the Theorem.

\end{proof}

\begin{corollary}\label{final result after operations}
Given a presentation $A \in \mathcal{M}_{\Gamma}$, the application of steps $0-3$ to $A$ yields a presentation $\bar{A} \in \mathcal{M}_{\Gamma}$ with the following properties.

\begin{enumerate}

\item
If $Q$ is any full subalgebra of $\bar{A}$ then there exists a full subalgebra $V$ of $A$ such that $Q_{ss}$ is a direct summand of $V_{ss}$.

\item

If $Q$ is a full subalgebra of $\bar{A}$, there is a strongly full polynomial of $Q$ which vanishes on the supplement of $Q$ in $A$.

\item

If $Q$ is a full subalgebra of $\bar{A}$, and $B \in \mathcal{M}_{\Gamma}$, then there is a strongly full polynomial of $Q$ which vanishes on every full algebra $V$ of $B$ whose semisimple subalgebra $V_{ss}$ strictly covers $Q_{ss}$ and appears as a summand of the semisimple subalgebra of a full subalgebra of $\bar{A}$.
\end{enumerate}
\end{corollary}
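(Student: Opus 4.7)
The plan is to verify the three assertions in turn, leveraging the fact that $\bar{A}$ is terminal with respect to every operation of types $0$ through $3$.

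For $(1)$ I would induct on the length of the reduction sequence $A \to \bar{A}$, maintaining the invariant that the semisimple part of every full summand of the current presentation is isomorphic to a direct summand of $V_{ss}$ for some full summand $V$ of the original $A$. The base is trivial, and each of the four operations preserves the invariant: $F_{0}$ only suppresses summands; $F_{1}$ fuses summands $Q_{1}, Q_{2}$ with $(Q_{1})_{ss}$ a direct summand of $(Q_{2})_{ss}$ and retains $(Q_{2})_{ss}$ as the new semisimple part; $F_{2}$ replaces $Q$ by pieces each of whose semisimple part is a proper direct summand of $Q_{ss}$; and $F_{3}$ fuses $Q$ with some $V' \in B$ where $V'_{ss}$ is a direct summand of $Q_{ss}$, so the fused block still has semisimple part $Q_{ss}$.

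For $(2)$ the statement follows directly from the stopping condition $\bar{A} = \bar{A}_{red_{0,1,2}}$. For any full summand $Q$ of $\bar{A}$, the operation $F_{2, Q}$ acts as the identity, which by the definition of Step $2$ forces the ``leave unchanged'' branch -- the branch entered precisely when $Q$ admits a weakly full polynomial $p_{0}$ already lying in $Id(\bar{A}_{supp(Q)})$. Applying Theorem \ref{full polynomials existence}(3) to $p_{0}$, I obtain a strongly full polynomial $f_{Q} \in \langle p_{0}\rangle_{T}$ of $Q$; since $Id(\bar{A}_{supp(Q)})$ is a $T$-ideal, $f_{Q}$ still vanishes on $\bar{A}_{supp(Q)}$.

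For $(3)$ the strategy is to refine the polynomial $f_{Q}$ produced in $(2)$ using the Main Lemma (Lemma \ref{principal lemma}) applied to the finite family of relevant $V$'s. Given $B \in \mathcal{M}_{\Gamma}$, let $\mathcal{V}$ be the finite collection of full summands $V$ of $B$ satisfying the two hypotheses. For each $V \in \mathcal{V}$, the covering hypothesis comparing $V_{ss}$ with $Q_{ss}$, combined with the antisymmetry of covering (Note \ref{antysymmetric and majorization}), puts us in the scope of the Main Lemma and produces a strongly full polynomial of $Q$ in $\langle f_{Q}\rangle_{T}$ vanishing on $V$. Invoking the subsequent corollary to the Main Lemma -- which handles a finite family simultaneously, provided the number of small sets in the construction exceeds the nilpotency indices of all the $J_{V}$ -- one obtains a single strongly full polynomial of $Q$ vanishing on every $V \in \mathcal{V}$. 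The main delicate point is to verify that the second hypothesis on $V_{ss}$ (appearing as a summand of the semisimple part of a full summand of $\bar{A}$), together with the stopping condition $\bar{A} = \bar{A}_{red_{3}}$ that precludes any further absorption, is precisely what ensures the covering hypothesis of the Main Lemma holds in the direction required; this is where I expect the bulk of the technical work to lie.
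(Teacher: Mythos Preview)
Your treatments of (1) and (2) match the paper's: both trace the effect of the operations on semisimple parts for (1), and both read (2) off the stopping condition for $F_{2}$, then upgrade weakly full to strongly full via Theorem~\ref{full polynomials existence}(3).

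Part (3), however, has a genuine gap. You propose to invoke the Main Lemma (Lemma~\ref{principal lemma}) to produce a strongly full polynomial of $Q$ vanishing on $V$. But the Main Lemma, with $A:=Q$ and $B:=V$, requires that $p_{V}$ \emph{not} cover $p_{Q}$, whereas the hypothesis of (3) is precisely the opposite: $V_{ss}$ \emph{strictly covers} $Q_{ss}$. Antisymmetry of covering only tells you that $Q_{ss}$ does not cover $V_{ss}$; that is the wrong direction and yields (via the Main Lemma) a strongly full polynomial of $V$ vanishing on $Q$, not what is needed. No amount of ``technical work'' with the second hypothesis on $V_{ss}$ or with the $F_{3}$ stopping condition will reverse the covering direction, so the Main Lemma is simply unavailable here.

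The paper's argument for (3) is different in kind and uses $F_{3}$ essentially. Since $V_{ss}$ is a direct summand of $(Q')_{ss}$ for some full summand $Q'$ of $\bar{A}$, one can perform the conditional absorption $(F_{3}^{cond})_{B,Q',V}$ on $\bar{A}$, obtaining $\bar{A}^{cond}$. Because $\bar{A}=\bar{A}_{red_{0,1,2,3}}$, the operation $F_{3}$ is the identity, which by definition forces $\bar{A}^{cond}=(\bar{A}^{cond})_{red_{0,1,2}}$. In particular $F_{2}$ is trivial on $\bar{A}^{cond}$, so $Q$ (which is still a full summand of $\bar{A}^{cond}$, since $Q'\ne Q$ as $(Q')_{ss}$ strictly covers $Q_{ss}$) admits a weakly full polynomial vanishing on its supplement in $\bar{A}^{cond}$. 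That supplement now contains the fusion of $Q'$ with $V$, hence contains $V$; promoting to a strongly full polynomial as in (2) gives the desired conclusion. Contrapositively, if no such polynomial existed, the absorption would trigger a genuine $F_{2}$-decomposition of $Q$, contradicting $\bar{A}=\bar{A}_{red_{0,1,2,3}}$. This is the mechanism you are missing.
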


\begin{proof}
By Theorem \ref{algebra after steps 0-3} we may assume $\bar{A} = \bar{A}_{red0,1,2,3}$  The operations of type $0$ and $1$ suppress full algebras of $A$ whereas in operation $2$ we decompose the semisimple part of a full algebra $Q$ into the direct sum of full algebras whose semisimple part is a direct summand of $Q_{ss}$. This proves the first statement. Also the second statement follows easily from the construction. Indeed, if this is not the case there is an operation of type $2$ which is not the identity on $\bar{A}$ contradicting $\bar{A} = \bar{A}_{red0,1,2,3}$.

Let us prove the last statement. By the claim we have that if such polynomial does not exist for a suitable full subalgebras $V$ of an algebra $B \in \mathcal{M}_{\Gamma}$, fusion of $V$ with the corresponding full algebras of $\bar{A}$ generates a decomposition of $Q$ into full algebras whose semisimple algebra is a strict summand of $Q_{ss}$. This contradicts $\bar{A} = \bar{A}_{red0,1,2,3}$ and the result follows.

\end{proof}

\begin{remark}
Note that it is possible that a presentation $B \in \mathcal{M}_{\Gamma}$ contains a full algebra $V$ whose semisimple part $V_{ss}$ does not appear as a direct summand of a full algebra of $\bar{A}$. This does not contradict the last statement of Theorem \ref{algebra after steps 0-3}.
\end{remark}

Let $\bar{A}$ be the algebra obtained from $A$ as in the theorem above and let $\bar{A} = T_{1}\oplus \ldots \oplus T_{n}$ be its decomposition into the direct sum of full algebras. Let $\bar{A}_{part}$ be the set of semsimple algebras appearing in $\bar{A}$, that is $\bar{A}_{part} = \{(T_{i})_{ss}\}_{i=1,\ldots,n}$ (see Definition \ref{partitions realized}).

Our goal is to show $\bar{A}_{part}$ is uniquely determined by $\Gamma$. More precisely

\begin{theorem}\label{equal decomposition full}

If $A, B \in \mathcal{M}_{\Gamma}$ then $\bar{A}_{part} = \bar{B}_{part}$.
\end{theorem}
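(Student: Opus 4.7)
The plan is to derive a contradiction from the assumption $\bar{A}_{part} \ne \bar{B}_{part}$. First observe that after an operation of type $1$ (fusion), no two full subalgebras of $\bar{A}$ (respectively $\bar{B}$) can have isomorphic semisimple parts, since such a pair would be fused. Consequently $\bar{A}_{part}$ and $\bar{B}_{part}$ may be viewed as ordinary sets (each element with multiplicity one), and by symmetry it suffices to prove $\bar{A}_{part} \subseteq \bar{B}_{part}$.

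Suppose for contradiction that $U \in \bar{A}_{part} \setminus \bar{B}_{part}$, and let $Q$ be the unique full subalgebra of $\bar{A}$ with $Q_{ss} = U$. Since $\bar{A} = \bar{A}_{red_{0,1,2,3}}$, operation of type $2$ is trivial on $Q$, so there exists a weakly full polynomial $p_{0}$ of $Q$ vanishing on the supplement of $Q$ in $\bar{A}$. Combining the construction of strongly full polynomials in Theorem \ref{full polynomials existence}(3), the strengthened form of the Main Lemma in the Corollary following Lemma \ref{principal lemma}, and Corollary \ref{final result after operations}(3), we build a strongly full polynomial $f_{Q} \in \langle p_{0} \rangle_{T}$ vanishing on: (i) the supplement of $Q$ in $\bar{A}$; (ii) every full subalgebra $V'$ of $\bar{B}$ whose semisimple part does not cover $U$; and (iii) every full subalgebra $V'$ of $\bar{B}$ whose semisimple part strictly covers $U$ and is a direct summand of $(Q')_{ss}$ for some full subalgebra $Q' \subseteq \bar{A}$.

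Since $f_{Q}$ is strongly full of $Q$, it is a nonidentity of $\bar{A}$ and hence of $\bar{B}$. Let $V$ be a full subalgebra of $\bar{B}$ on which $f_{Q}$ admits a nonzero admissible evaluation. By (ii), $V_{ss}$ covers $U$; since $U \notin \bar{B}_{part}$ we have $V_{ss} \ne U$, so $V_{ss}$ strictly covers $U$. By (iii), $V_{ss}$ is not a direct summand of $(Q')_{ss}$ for any full $Q' \subseteq \bar{A}$; in particular $V_{ss}$ is not itself the semisimple part of any full subalgebra of $\bar{A}$, so $W := V_{ss} \in \bar{B}_{part} \setminus \bar{A}_{part}$. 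We have shown that every element of $\bar{A}_{part} \setminus \bar{B}_{part}$ is strictly covered by some element of $\bar{B}_{part} \setminus \bar{A}_{part}$. The symmetric argument (applying Corollary \ref{final result after operations} to $\bar{B}$ in place of $\bar{A}$, which is legal because $\bar{B}$ is also fully reduced) shows that every element of $\bar{B}_{part} \setminus \bar{A}_{part}$ is strictly covered by some element of $\bar{A}_{part} \setminus \bar{B}_{part}$. Iterating produces an infinite strictly ascending chain in the finite set $\bar{A}_{part} \cup \bar{B}_{part}$ under the covering partial order (antisymmetric by Note \ref{antysymmetric and majorization}), which is absurd.

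The main technical obstacle lies in producing a single strongly full polynomial $f_{Q}$ that simultaneously satisfies the three vanishing constraints (i), (ii), (iii). Property (i) is inherited automatically by any polynomial in $\langle p_{0} \rangle_{T}$; property (ii) follows from the Corollary after the Main Lemma by taking the number of small sets in Kemer's construction to exceed the nilpotency indices of the offending full subalgebras of $\bar{B}$; property (iii) is furnished by the argument in the proof of Corollary \ref{final result after operations}(3), which converts the hypothetical non-existence of such a vanishing into a nontrivial operation of type $2$ acting after a step-$3$ fusion, contradicting $\bar{A} = \bar{A}_{red_{0,1,2,3}}$. All three constructions fit into the same flexible template of Kemer's Lemma $1$, so they can be merged by starting from a weakly full $p_{0}$ that already vanishes on the supplement and then enlarging the small sets to accommodate the finitely many extra vanishing conditions coming from $\bar{B}$.
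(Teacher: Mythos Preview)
Your argument is correct and follows essentially the same strategy as the paper's proof. The paper streamlines your infinite-ascent step by choosing, at the outset, a full subalgebra $Q$ that is \emph{maximal} with respect to covering in the symmetric difference $\Omega$ of $\bar{A}_{part}$ and $\bar{B}_{part}$; maximality then forces every full subalgebra of $\bar{B}$ strictly covering $Q$ to have its semisimple part already in $\bar{A}_{part}$, so Corollary~\ref{final result after operations}(3) together with the Main Lemma produce a single strongly full polynomial of $Q$ vanishing on all of $\bar{B}$, giving the contradiction directly rather than via iteration. Note also that your condition (i) is unnecessary: since $f_{Q}$ is strongly full of $Q$ it is automatically a nonidentity of $Q\subseteq\bar{A}$, so the final paragraph about merging (i) with (ii) and (iii) can be dropped, and only the compatibility of (ii) and (iii) (which is exactly what Corollary~\ref{final result after operations}(3) combined with a large enough number of small sets provides) is needed.
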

\begin{remark}
Note that we know the result for maximal points where $A, B \in \mathcal{M}_{\Gamma}$ are arbitrary (see Corollary \ref{maximal tuples with respect to covering are equal}).
\end{remark}
\begin{proof}
Suppose the theorem is false and consider the family $\Omega$ of all full subalgebras of $\bar{A}$ (resp. $\bar{B}$) whose semisimple part does not appear in $\bar{B}$ (resp. $A$). Let $Q \in \Omega$ be maximal with respect to covering and assume without loss of generality that $Q = Q_{\bar{A}}$ is a full subalgebra of $\bar{A}$. Now, by the maximality of $Q_{\bar{A}}$ the semisimple part of every full subalgebra of $\bar{B}$ that strictly covers $Q_{\bar{A}}$ appears in $\bar{A}$. It follows, by Corollary\ref{final result after operations}($3$), there exists a full polynomial $p$ which vanishes on every full subalgebra of $\bar{B}$ that strictly covers $Q_{\bar{A}}$. Furthermore, by our construction of strongly full polynomials there exists such $p$ that vanishes on every full subalgebra of $\bar{B}$ that does not cover $Q_{\bar{A}}$ and so $p$ vanishes on $\bar{B}$. This contradicts $\bar{A}$ and $\bar{B}$ are PI equivalent and the theorem is proved.

\end{proof}

Step ${4}$ (merging): In the final step we merge full subalgebras. Let $\bar{A}$ be an algebra as in the theorem. For each isomorphism type of a simple algebra $M_{n}(F)$ we let $d_{n}$ be the maximal appearance of $M_{n}(F)$ in a full subalgebra of $\bar{A}$. Then we let $\mathcal{A}_{\Gamma, ss} = \Lambda_{n_1} \oplus \cdots \oplus \Lambda_{n_t}$ where $\Lambda_{n_i}$ is the direct sum of $d_{n_i}$ copies $M_{n_i}(F)$. Finally we let $\mathcal{A} \cong \mathcal{A}_{\Gamma, ss} \oplus J_{A}$, where the direct sum is of vector spaces.

\begin{theorem}
There is exists an algebra structure on $\mathcal{A}_{\Gamma}$ so that
\begin{enumerate}
\item
$Id(\mathcal{A}_{\Gamma}) = \Gamma$.

\item
If $B$ is finite dimensional and $\Id(B) = \Gamma$ then $\mathcal{A}_{\Gamma, ss}$ is isomorphic to a direct summand of $B_{ss}$.

\end{enumerate}

\end{theorem}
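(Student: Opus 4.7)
My plan is to define an algebra structure on $\mathcal{A}_{\Gamma}$ by realising it as a subalgebra of the reduced presentation $\bar{A} = T_{1} \oplus \cdots \oplus T_{n}$, generalising the construction in Lemma \ref{fusion lemma}. Since $d_{n}$ is, by construction, the maximum over $i$ of the multiplicity of $M_{n}(F)$ in $(T_{i})_{ss}$, for each $i$ one can pick an algebra monomorphism $\iota_{i} \colon (T_{i})_{ss} \hookrightarrow \mathcal{A}_{\Gamma, ss}$ and coordinate the choices so that $\sum_{i} \iota_{i}((T_{i})_{ss}) = \mathcal{A}_{\Gamma, ss}$; this is possible precisely because for every $n$ some $(T_{i^{*}})_{ss}$ achieves the maximum $d_{n}$, and $\iota_{i^{*}}$ can be made to cover all $d_{n}$ copies of $M_{n}(F)$. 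One then considers the vector-space map
\[
\Psi \colon \mathcal{A}_{\Gamma, ss} \oplus J_{A} \longrightarrow T_{1} \times \cdots \times T_{n}
\]
whose semisimple coordinate sends $s \mapsto (\iota_{i}^{-1}\pi_{i}(s))_{i}$, with $\pi_{i}$ the projection of $\mathcal{A}_{\Gamma, ss}$ onto the direct summand $\iota_{i}((T_{i})_{ss})$, and which sends $J_{T_{i}} \subseteq J_{A}$ by inclusion into the $i$-th coordinate. A routine verification shows that $\Psi$ is a vector-space injection whose image is closed under multiplication, and pulling back the product of $\bar{A}$ yields an algebra structure on $\mathcal{A}_{\Gamma}$ whose Wedderburn--Malcev decomposition is exactly $\mathcal{A}_{\Gamma, ss} \oplus J_{A}$.

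Part $(1)$ then follows as in Lemma \ref{fusion lemma}: the embedding $\Psi$ immediately gives $Id(\mathcal{A}_{\Gamma}) \supseteq Id(\bar{A}) = \Gamma$, while each $T_{i}$ sits inside $\mathcal{A}_{\Gamma}$ as a subalgebra with semisimple part $\iota_{i}((T_{i})_{ss})$ and radical $J_{T_{i}}$, which gives $Id(\mathcal{A}_{\Gamma}) \subseteq \bigcap_{i}Id(T_{i}) = Id(\bar{A}) = \Gamma$, and equality follows.

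For part $(2)$, which I expect to be the main obstacle, let $B \in \mathcal{M}_{\Gamma}$. I first invoke the earlier proposition to pick a presentation $Pr(B) = T_{1}^{B} \oplus \cdots \oplus T_{m}^{B}$ of $B$ in which each $(T_{j}^{B})_{ss}$ is a direct summand of $B_{ss}$. Applying steps $0$--$3$ to $Pr(B)$ produces $\overline{Pr(B)}$; by Corollary \ref{final result after operations}$(1)$, every full subalgebra $Q$ of $\overline{Pr(B)}$ has $Q_{ss}$ isomorphic to a direct summand of some $(T_{j}^{B})_{ss}$, and hence of $B_{ss}$. The invariance Theorem \ref{equal decomposition full} identifies $\overline{Pr(B)}_{part}$ with $\bar{A}_{part}$, so $d_{n}$ is simultaneously the maximum multiplicity of $M_{n}(F)$ in a full subalgebra of $\overline{Pr(B)}$. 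These two facts together force $d_{n}$ to be at most the multiplicity of $M_{n}(F)$ in $B_{ss}$ for every $n$, and therefore $\mathcal{A}_{\Gamma, ss} = \bigoplus_{n} d_{n} M_{n}(F)$ embeds as a direct summand of $B_{ss}$. The essential subtlety is that $d_{n}$ was extracted from the reduced form by examining its full subalgebras, and one must transport this into a genuine bound on the Wedderburn--Malcev decomposition of an arbitrary $B$; neither the summand-type presentation proposition nor the invariance theorem alone is sufficient, and both ingredients must be used in tandem to relate $d_{n}$ to $B_{ss}$ itself rather than to some intermediate algebra produced during the reduction.
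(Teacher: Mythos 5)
Your proposal is correct and follows essentially the same approach as the paper: you define the same algebra structure on $\mathcal{A}_{\Gamma}$ (radicals of different full summands annihilate each other, and each $T_{i}$ acts on its own radical through a chosen copy $\iota_{i}((T_{i})_{ss})$ of its semisimple part), and you use the same combination of Theorem \ref{equal decomposition full}, Corollary \ref{final result after operations}(1), and the summand-compatible presentation proposition to establish part (2). Your explicit diagonal-type embedding $\Psi\colon \mathcal{A}_{\Gamma}\hookrightarrow T_{1}\times\cdots\times T_{n}$, generalizing Lemma \ref{fusion lemma}, is a cleaner packaging of the paper's product rule: it delivers associativity and the inclusion $Id(\mathcal{A}_{\Gamma})\supseteq\Gamma$ for free, where the paper instead checks the latter by a direct case analysis of nonzero evaluations.
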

\begin{proof}
For the algebra structure on $\mathcal{A}$ we set the product as follows. The product on $\mathcal{A}_{\Gamma, ss}$ is already determined. Products of radical elements which belong to different full algebras is set to be zero. Let us determine the multiplication of semisimple elements with radicals. Using distributivity we let $z\in J_{\bar{A}_{i}}$ where $\bar{A}_{i}$ is a full summand of $\bar{A}$. Choose a summand of $(U_{i})_{ss}$ of $\mathcal{A}_{\Gamma, ss}$ isomorphic to $(\bar{A}_{i})_{ss}$. Let $K$ be the semisimple supplement of $(U_{i})_{ss}$ in $\mathcal{A}_{\Gamma, ss}$, that is $(U_{i})_{ss} \oplus K \cong \mathcal{A}_{\Gamma, ss}$. Then we set the product of $z$ with semsimple elements of $(U_{i})_{ss}$ as in $\bar{A}_{i}$ whereas the multiplication of $z$ with elements of $K$ is set to be zero. Let us show $Id(\mathcal{A}) = \Gamma$. Each $\bar{A}_{i}$ is isomorphic to a summand of $\mathcal{A}$ and so $Id(\bar{A}) \supseteq Id(\mathcal{A})$. For the opposite inclusion let $p$ be a multilinear nonidentity of $\mathcal{A}$ and fix a nonzero evaluation on $\mathcal{A}$. Since the multiplication of radical elements of different summands $J_{\bar{A}_{i}}$ and $J_{\bar{A}_{j}}$ is zero the evaluation may involve at most radicals from $J_{\bar{A}_{i}}$, for a unique $i$. For that $i$, semisimple elements that appear in the evaluation must belong to the summand $(U_{i})_{ss}$. We see the polynomial $p$ is a nonidentity of $\bar{A}_{i}$ and so a nonidentity of $\bar{A}$.
For the proof of the second statement, by the construction of $\mathcal{A}_{\Gamma}$ from $\bar{A}$ we see $\mathcal{A}_{\Gamma, ss}$ is a direct summand of $\bar{A}_{ss}$ and hence, by Theorem \ref{equal decomposition full}, also of $\bar{B}_{ss}$. Furthermore, we see from Step $4$ that every $\Lambda_{n_i}$ is a direct summand of the semisimple part of a full summand of $\bar{A}$ and hence of $\bar{B}$. We complete the proof of the theorem invoking Corollary \ref{final result after operations}($1$).

\end{proof}
\section{Nonaffine algebras}\label{nonaffine algebras section}

In this section we prove Theorem \ref{uniquely determined nonaffine}.

Note that the key point in the construction of strongly full polynomials for a finite dimensional full algebra $A$ was the fact that in any nonzero evaluation we were forced to evaluate the designated variables in at least one small set by a complete basis of semisimple elements. Then, for such polynomial we showed it is an identity of any full algebra $B$ that does not cover $A$. Now, if $A$ is a finite dimensional full super algebra (see \cite{AB}), it is not difficult to construct a \textit{super} strongly full polynomial with a similar property, that is, a polynomial $p$ that visits a full basis of the semisimple part of $A$ in every nonzero evaluation. However, this is not what we need. For the proof, we need an \textit{ungraded} polynomial, nonidentity of $E(A)$, which visits the different super simple components of $A$
in any nonzero evaluations of the form $\epsilon \otimes u$. Here, $\epsilon = 1 \in E$ or $\epsilon_{i} \in E$ is a generator, and $u \in A$. Furthermore, as in the affine case, we shall need a full basis $\{u\} \subseteq A_{ss}$ to appear in every nonzero evaluation of $f$.
In fact, as in the affine case, we will need to construct such polynomials for $E(A)$ that belong to the $T$-ideal generated by an arbitrary weakly full polynomial of $E(A)$.

Once we have constructed such polynomials for $E(A)$ where $A$ is a finite dimensional full super algebra, we will be able to show the analogue of the Main lemma. The proof of Theorem \ref{uniquely determined nonaffine} will then follow the same lines as the proof in the affine case.

We start by defining a partial ordering on finite dimensional semisimple $\mathbb{Z}_{2}$-graded algebras.

Let $A = A_{1} \oplus \cdots \oplus A_{q}$ and $B = B_{1} \oplus \cdots \oplus B_{s}$ be the decompositions of semisimple algebras $A$ and $B$ into direct sum of finite dimensional $\mathbb{Z}_{2}$-graded simple algebras $A_{i}$ and $B_{j}$ respectively. Consider the pair $p_{A} = (p_{A, 0}, p_{A,1})$ where $p_{A, 0} = (a_{0,1}, \ldots, a_{0,q})$ and $p_{A,1} = (a_{1,1}, \ldots, a_{1,q})$ are $q$-tuples consisting the dimensions of the $0$-components and the $1$-components of the $\mathbb{Z}_{2}$-graded simple summands of $A$. Similarly we have the pair $p_{B} = (p_{B, 0}, p_{B,1})$ and $t$-tuples $p_{B, 0} = (b_{0,1}, \ldots, b_{0,t})$ and $p_{B,1} = (b_{1,1}, \ldots, b_{1,q})$ for the algebra $B$.
\textbf{}
\begin{definition}
We say $B$ covers $A$ (or $p_{B}$ covers $p_{A}$) if there exists a decomposition of the tuple $(1,\ldots, q)$ into $t$ subsets such that the sum of the elements of $p_{A, 0} = (a_{0,1}, \ldots, a_{0,q})$ corresponding to the $i$th subset is bounded by $b_{0,i}$ and the corresponding sum of odd elements in  $p_{A,1} = (a_{1,1}, \ldots, a_{1,q})$ is bounded by $b_{1,i}$ (same $i$), $i=1,\ldots,t$.
\end{definition}

\begin{example}
Consider the pair of tuples $p_{B} = (p_{B_{0}}, p_{B_{1}})$ where $p_{B_{0}} = (17, 13)$ and $p_{B_{1}} = (8,12)$. It covers the pair $p_{A} = (p_{A_{0}}, p_{A_{1}})$ where $p_{A_{0}} = (16, 10, 2)$ and $p_{A_{1}} = (0, 4, 2)$. On the other hand the pair $p_{B} = (p_{B_{0}}, p_{B_{1}})$ where
$p_{B_{0}} = (17, 13)$ and $p_{B_{1}} = (8,12)$ does not cover the pair $p_{A} = (p_{A_{0}}, p_{A_{1}})$ where
$p_{A_{0}} = (10, 10, 4)$ and
$p_{A_{1}} = (6, 6, 4)$. Note, however, that the tuple $(17,13)$ (resp. $(8,12)$) does cover $(10,10,4)$ (resp. $(6,6,4)$).
\end{example}

Let $A$ be a finite dimensional super algebra over an algebraically closed field $F$ of characteristic zero. Let $A \cong A_{ss} \oplus J$ be the Wedderburn-Malcev decomposition of $A$. Let $A_{ss} \cong A_{1} \times \cdots \times A_{q}$ where $A_{i}$ are super simple algebras.
\begin{definition}
We say $A$ is full if up to ordering of the super simple components we have $A_{1}\cdot J\cdot A_{2}\cdots J\cdot A_{q} \neq 0$.
\end{definition}

Before stating the Main Lemma, let us make precise definitions of admissible evaluations of polynomials as well as weakly full, full and strongly full polynomials of $E(A)$ where $A$ is a finite dimensional full super algebra.

Let $U$ be a finite dimensional $\mathbb{Z}_{2}$-simple algebra. It is well known that $U$ is either isomorphic to $M_{l,f}(F)$ where the grading is elementary and is determined by an $(l+f)$-tuple with $l$ $e$'s and $f$ $\sigma$'s, where an elementary matrix $e_{i,j}$ has degree $e$ if $1 \leq i,j \leq l$ or $l+1 \leq i,j \leq l+f$ and degree $\sigma$ otherwise or isomorphic to $FC_{2} \otimes M_{n}(F)$ where elements of the form $u_{e}\otimes e_{i,j}$ have degree $e$ and elements of the form $u_{\sigma}\otimes e_{i,j}$ have degree $\sigma$. Note that the set $\{e_{i,j}\}$ (resp. \{$u_{e}\otimes e_{i,j}\}$) is a basis of $M_{l,f}(F)$ (resp. of $FC_{2} \otimes M_{n}(F)$). We denote by $\beta_{ss}$ a basis of $A_{ss}$ of elements of all elements of that form. Note that all basis elements in $\beta_{ss}$ are homogeneous. If $U$ is any simple component of $A_{ss}$, and $z$ denotes a basis element of $U$ as above we consider a basis $\Sigma_{ss}$ of $E(A_{ss})$ consisting of all elements of the form $\epsilon_{i_1}\cdots\epsilon_{i_n} \otimes z$, $n$ is even (in case $n=0$, we set $\epsilon_{i_1}\cdots\epsilon_{i_n} = 1$) and $z \in \beta_{ss}$ has degree $e$ or $\epsilon_{i_1}\cdots\epsilon_{i_n} \otimes z$, $n$ is odd and $z\in \beta_{ss}$ has degree $\sigma$. Here $\epsilon_{i_1},\ldots, \epsilon_{i_n}$ are different generators of the Grassmann algebra $E$. Finally, we choose an \textit{homogeneous} basis $\beta_{J}$ of the Jacobson radical $J$ of $A$ and consider a basis $\Sigma_{J}$ of $E(J)$ consisting of all elements of the form $\epsilon_{i_1}\cdots\epsilon_{i_n} \otimes w$ where (as above) $n$ is even and $w \in \beta_{J}$ is of degree $e$ or $n$ is odd and $w \in \beta_{J}$ is of degree $\sigma$.
\begin{definition}
Let $p$ be a multilineal polynomial. We say an evaluation of $p$ on $E(A)$ is admissible if all values are taken from $\Sigma_{ss}$ or $\Sigma_{J}$.
\end{definition}

\begin{definition}
Let $A$ be a finite dimensional full super algebra as above.
\begin{enumerate}
\item

We say a multilinear polynomial $p$ is \textit{weakly full} of $E(A)$ if there is an admissible nonzero evaluation of $p$ on $E(A)$ where among the elements $\epsilon_{i_1}\cdots\epsilon_{i_n} \otimes z$, $z\in A_{ss}$ that appear in the evaluation, we have at least one elements $z$ from each $\mathbb{Z}_{2}$-simple component of $A_{ss}$.
\item
We say a multilinear polynomial $p$ is \textit{full} of $E(A)$ if all $\mathbb{Z}_{2}$-simple subalgebras of $A_{ss}$ appear in every nonzero admissible evaluation of $p$ on $E(A)$. That is for every $Z_{2}$-simple component $A_{i}$, $i=1,\ldots,q$, there is a variable of $p$ whose value is of the form $\epsilon_{i_1}\cdots\epsilon_{i_n} \otimes z$ for some $z \in A_{i}$.
\item
We say a multilinear polynomial $p$ is \textit{strongly full} of $E(A)$ if for every nonzero admissible evaluation of $p$ on $E(A)$ and every $z \in A_{ss}$, there is variable of $p$ whose value is of the form $\epsilon_{i_1}\cdots\epsilon_{i_n} \otimes z$.
\end{enumerate}
\end{definition}

The following statement is the Main Lemma in the nonaffine case.
\begin{lemma}
Suppose $A$ and $B$ are finite dimensional $\mathbb{Z}_{2}$-graded full algebras. Suppose $p_{B}$ does not cover $p_{A}$. Then there exists a strongly full polynomial nonidentity of $E(A)$ which is an identity of $E(B)$. Furthermore if $f$ is an arbitrary weakly full polynomial of $E(A)$, then there exists a strongly full polynomial $p \in \langle f \rangle_{T}$ of $E(A)$.
\end{lemma}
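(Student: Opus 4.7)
The plan is to parallel the proof of the affine Main Lemma (Lemma \ref{principal lemma}), modifying the construction of the polynomial $p$ to accommodate the Grassmann envelope. As in the affine case I would first treat the model weakly full polynomial $f_0 = X_1 \cdot w_1 \cdots w_{q-1} \cdot X_q$, corresponding to a permutation of the super simple components of $A_{ss}$ for which $A_1 \cdot J \cdot A_2 \cdots J \cdot A_q \neq 0$, and then reduce the general case by writing an arbitrary weakly full $f_0$ as $\sum_\sigma \lambda_\sigma R_\sigma$ and applying the construction to each monomial $R_\sigma$.

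For each super simple component $A_i$ fix a homogeneous basis $\beta_i = \beta_{i,0} \cup \beta_{i,1}$, and let $d_{i,0}$ and $d_{i,1}$ denote the sizes of the two parts, with $d_i = d_{i,0} + d_{i,1}$. I would introduce a monomial $Z_{A_i}(x_1, \ldots, x_{d_i}; y_1, \ldots, y_{d_i+1})$ whose designated variables $x_j$ are tagged even or odd according to which half of $\beta_i$ they are meant to hit, separated by frame variables $y_j$. Take $k$ disjoint copies $Z_{A_i,1}, \ldots, Z_{A_i,k}$, set $\Delta_i = Z_{A_i,1} \cdots Z_{A_i,k}$, and replace $X_i$ by $X_i \cdot \Delta_i$ in $f_0$ to obtain a monomial $\Omega$; the parameter $k$ is chosen to exceed the nilpotency indices of both $A$ and $B$.

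The essential new ingredient is the alternation step. Naive antisymmetrization over all designated variables in a small set will kill every nonzero evaluation on $E(A)$, because the basis elements of $E(A_{ss})$ of the form $\epsilon_{i_1}\cdots\epsilon_{i_n} \otimes z$ carry a Grassmann prefactor whose parity matches that of $z$, and swapping two odd-tagged variables picks up an additional minus sign from Grassmann anti-commutation. I would therefore split, within each small set and each $i$, the designated variables into their even- and odd-tagged subsets, then antisymmetrize the even-tagged group and symmetrize the odd-tagged group. On an admissible evaluation sending even-tagged variables into $E_0 \otimes A_{i,0}$ and odd-tagged variables into $E_1 \otimes A_{i,1}$, the Grassmann signs from rearranging odd generators convert the symmetric sum on the odd group into an effectively antisymmetric one. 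The resulting ungraded polynomial $p$ is strongly full of $E(A)$ by the usual pigeonhole argument: with $k$ exceeding the nilpotency index, every nonzero admissible evaluation has, for each $i$, at least one small set whose designated variables are all semisimple, and the super-alternation forces their values to realize the full basis $\beta_i$.

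To see that $p$ vanishes on $E(B)$, fix a hypothetical nonzero admissible evaluation on $E(B)$. Since $k$ exceeds the nilpotency index of $B$, for each $i$ some augmented small set (corresponding to $Z_{A_i,r_i}$) is free of radical values, and the surrounding frame variables pin those values inside a single super simple component $B_{j(i)}$ of $B$. The super-alternation then forces the values to be linearly independent with the prescribed parities, so that $\dim B_{j(i),0} \geq \dim A_{i,0}$ and $\dim B_{j(i),1} \geq \dim A_{i,1}$. Arranging that the small sets used for different $i$ are disjoint (which is possible by taking $k > q$) yields a partition witnessing that $p_B$ covers $p_A$, contradicting the hypothesis. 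The main obstacle I anticipate is verifying rigorously that the super-alternation does what is advertised: one must align the sign factors extracted from the Grassmann algebra with the combinatorial signs of the antisymmetrize-even / symmetrize-odd scheme and check that the resulting sum is genuinely nonzero and does not degenerate to a formal identity on $E(B)$.
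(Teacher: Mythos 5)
Your plan correctly reproduces the first layer of the paper's construction (within each small set: alternate the even--tagged variables, symmetrize the odd--tagged ones, letting Grassmann signs convert the odd symmetrization into an effective alternation on good evaluations), and you correctly suspect at the end that there is a hidden difficulty with the sign bookkeeping. But the gap is more concrete than a bookkeeping issue, and the construction as you describe it is genuinely incomplete.

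The problem is with the pigeonhole step. You claim that if $k$ exceeds the nilpotency index, some small set receives only semisimple values and the ``super-alternation forces their values to realize the full basis $\beta_i$.'' This does not follow. In the affine case the only way a small set can fail to hit the full basis (while being nonzero after alternation) is for a designated variable to take a radical value, and those are bounded by nilpotency. In the Grassmann envelope there is a second escape: an even-tagged variable can be assigned an odd semisimple value $\epsilon_I \otimes z$, $z \in A_{i,1}$, or an odd-tagged variable can be assigned an even one. These \emph{parity mismatches} are not radical and are not controlled by the nilpotency index. Moreover, your first-level alternation does not penalize them: if two even-tagged variables in the same small set both receive odd values, swapping them picks up a $-1$ from the alternation and another $-1$ from Grassmann anti-commutation, so the alternation effectively becomes a symmetrization there, which does not force linear independence of the algebra parts. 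Thus arbitrarily many small sets can be corrupted by parity mismatches without forcing the evaluation to vanish, and neither strong fullness on $E(A)$ nor the covering consequence on $E(B)$ follows.

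The paper's fix, which is absent from your proposal, is a \emph{second} layer of alternation and symmetrization \emph{across} the $k$ small sets: the designated even positions are arranged in a $d_0 \times k$ horizontal strip and the designated odd positions in a $k \times d_1$ vertical strip; after the per-small-set operations one additionally symmetrizes each row of the horizontal strip (the $k$ copies of a fixed even basis position) and alternates each column of the vertical strip (the $k$ copies of a fixed odd basis position). This is what makes the pigeonhole work: if too many columns of the horizontal strip carry an odd value, then some row has two such entries with the same algebra part, and the row symmetrization kills the term by Grassmann anti-commutation; dually for the vertical strip. Only after bounding the number of corrupted columns/rows this way can one conclude that some index $i$ has both a clean horizontal column and a clean vertical row, at which point the first-level alternation does force a full basis of $A_{ss,0}$ and $A_{ss,1}$ respectively. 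Without this cross-set layer, your parameter $k$ exceeding the nilpotency index is not the right threshold, and the argument does not close.
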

\begin{proof}

As in the affine case we have that a monomial $f_{0} = X_{1}\cdot w_{1} \cdots w_{q-1}\cdot X_{q}$ of degree $2q-1$ is weakly full of $A$ (that is $p$ visits every $Z_{2}$-simple component of $A$) and hence weakly full of $E(A)$. We proceed with the construction of a strongly full polynomial in $\langle f_{0} \rangle_{T}$.
\end{proof}
Let $d_{0}$ (resp. $d_{1}$) be the dimension of the even (resp. odd) homogeneous component of $A_{ss}$.
We consider a diagram composed of two strips of semisimple elements, denoted by $\alpha_{i,j}$ and similarly two strips of variables $x_{i,j}$, horizontal and vertical,  where the horizontal strip has $d_{0}$ rows and $k$ columns and the vertical strip has $d_{1}$ columns and $k$ rows ($k$ to be determined).

\[
\begin{array}{cc}
 & \begin{array}{|cccc|}
\hline \alpha_{1,d_{1}+1} & \alpha_{1,d_{1}+2} & \cdots & \alpha_{1,d_{1}+k}\\
\alpha_{2,d_{1}+1} & \alpha_{2,d_{1}+2} & \cdots & \alpha_{2,d_{1}+k}\\
\vdots & \vdots &  & \vdots\\
\alpha_{d_{0},d_{1}+1} & \alpha_{d_{0},d_{1}+2} & \cdots & \alpha_{d_{0},d_{1}+k}
\\\hline \end{array}\\
\begin{array}{|ccc|}
\hline \alpha_{d_{0}+1,1} & \cdots & \alpha_{d_{0}+1,d_{1}}\\
\alpha_{d_{0}+2,d_{1}} & \cdots & \alpha_{d_{0}+2,d_{1}}\\
\alpha_{d_{0}+3,d_{1}} & \cdots & \alpha_{d_{0}+3,d_{1}}\\
\vdots &  & \vdots\\
\alpha_{d_{0}+k,d_{1}} & \cdots & \alpha_{d_{0}+k,d_{1}}
\\\hline \end{array}
\end{array}
\]

\[
\begin{array}{cc}
 & \begin{array}{|cccc|}
\hline x_{1,d_{1}+1} & x_{1,d_{1}+2} & \cdots & x_{1,d_{1}+k}\\
x_{2,d_{1}+1} & x_{2,d_{1}+2} & \cdots & x_{2,d_{1}+k}\\
\vdots & \vdots &  & \vdots\\
x_{d_{0},d_{1}+1} & x_{d_{0},d_{1}+2} & \cdots & x_{d_{0},d_{1}+k}
\\\hline \end{array}\\
\begin{array}{|ccc|}
\hline x_{d_{0}+1,1} & \cdots & x_{d_{0}+1,d_{1}}\\
x_{d_{0}+2,d_{1}} & \cdots & x_{d_{0}+2,d_{1}}\\
x_{d_{0}+3,d_{1}} & \cdots & x_{d_{0}+3,d_{1}}\\
\vdots &  & \vdots\\
x_{d_{0}+k,d_{1}} & \cdots & x_{d_{0}+k,d_{1}}
\\\hline \end{array}
\end{array}
\]

\bigskip

We construct a \textit{long} monomial consisting of elements of $A$ as follows.

For each $\mathbb{Z}_{2}$-graded simple component we write a nonzero product of the standard basis, namely elements of the form $e_{i,j} \in M_{l,f}(F)$ or $u_{g} \otimes e_{i,j} \in FC_{2} \otimes M_{n}$ where $g = e, \sigma$. It is known that such a product exists. We refer to these elements as \textit{designated} elements. In order to keep a unified notation we shall replace $e_{i,j} \in M_{l,f}(F)$ by $u_{e} \otimes e_{i,j}$. Furthermore, we may assume for simplicity that the nonzero product starts (resp. ends) with an element of the form $u_{e}\otimes e_{1,y}$ (resp. $u_{g}\otimes e_{x,1}$). Next we border each basis element $u_{g}\otimes e_{i,j}$ from left (resp. right) with the element $u_{e}\otimes e_{i,i}$ (resp. $u_{e} \otimes e_{j,j}$) which we call \textit{frame}, so that the product of the monomial remains nonzero. Let us denote the product above, namely the product corresponding to the $\mathbb{Z}_{2}$-graded simple algebra $A_{i}$ by $Z_{i}$. We take now the product of $k$ copies of this monomial $Z_{i,1}\cdots Z_{i,k}$. This is clearly nonzero. Next, we bridge the $\mathbb{Z}_{2}$-graded simple components with appropriate radical values $w_{s, s+1}$ and get a nonzero product as dictated by the expression $A_{1}JA_{2}\cdots JA_{q} \neq 0$.

Finally, we tensor the basis elements with Grassmann elements, where even elements of $A$ are tensored with $1$ and odd elements are tensored with \textit{different} generators $\epsilon_{i}$ (odd order). We shall always view these tensors as ungraded elements of $E(A)$ although, abusing language, we will refer to them as even and odd elements respectively.

We obtained a nonzero expression of the form

$$Z_{1,1}\cdots Z_{1,k}\cdot w_{1,2}\cdot Z_{2,1}\cdots Z_{2,k}\cdot w_{2,3} \cdots w_{q-1,q}\cdot Z_{q,1}\cdots Z_{q,k}.$$
Consider the set $U_{even, 1}$ of designated even elements in the tuple $$(Z_{1,1}, Z_{2,1}, \ldots, Z_{q,1}).$$ Similarly we let $U_{even, i}$, $i=1,\ldots,k$ be the designated even elements in the tuple $(Z_{1,i}, Z_{2,i}, \ldots, Z_{q,i})$. Observe that the cardinality of $U_{even, i}$ is $d_{o} = dim_{F}A_{ss, 0}$.

We denote the elements of $U_{even, i}$ by $\alpha_{1, d_{1}+i},\ldots, \alpha_{d_{0}, d_{1}+i}$, that is as the $i$th column of the horizontal strip above. Furthermore, it will be convenient to denote the elements $(Z_{1,1}, Z_{2,1}, \ldots, Z_{q,1})$ in the same order as they appear in the $i$th column.

Similarly, $U_{odd, j}$ consists of all designated odd elements in the tuple
$$(Z_{1,i}, Z_{2,i}, \ldots, Z_{q,i})$$
and we denote them using the notation of the elements in the $j$th row of the vertical strip.

For each $t = 1,\ldots,k$ we alternate the designated (even) elements $$\alpha_{1, d_{1}+t},\ldots, \alpha_{d_{0}, d_{1}+t}$$ and symmetrize the designated (odd) elements $\alpha_{d_{0}+t, 1},\ldots, \alpha_{d_{0}+t, d_{1}}$. We claim the expression obtained is nonzero. Indeed, any \textit{nontrivial} permutation (independently of its sign) of designated even elements will be surrounded by frames where not all match and hence will vanish. Similarly with the odd elements of $A$. In particular alternating the even elements and symmetrizing the odd elements yields a nonzero value.

We now \textit{symmetrize} the sets of $k$ elements corresponding to the rows of the horizontal strip and \textit{alternate} the sets of $k$ elements corresponding to the columns of the vertical strip. We claim we get a nonzero value. For the proof we may assume each tuple of $k$ even elements are equal and are of the form $u_{e}\otimes e_{i,j}$ whereas for the odd elements we assume as we may, the elements of each $k$ tuple have the form $\epsilon_{i,j,g}\otimes u_{g}\otimes e_{i,j}$, $g = \{e, \sigma \}$, $\epsilon_{i,j,g}$ are generators of the Grassmann algebra and the elements $u_{g}\otimes e_{i,j}$ of $A$ are equal. It follows that symmetrization of the rows in the horizontal strip and alternation of the columns in the vertical strip yield the multiplication of each monomial by a factor of $(k!)^{d_{0}}$. In particular if the respective operations were performed on a vanishing product it remains zero whereas it is nonzero if the operations were performed on a nonvanishing product.

We now replace the elements of $E(A)$ appearing in the monomial
$$Z_{1,1}\cdots Z_{1,k}\cdot w_{1,2}\cdot Z_{2,1}\cdots Z_{2,k}\cdot w_{2,3} \cdots w_{q-1,q}\cdot Z_{q,1}\cdots Z_{q,k}$$ by variables which we call designated variables, frames and bridges. Note that the monomial obtained is in $\langle f_{0} \rangle_{T}$ where $f_{0} = X_{1}\cdot w_{1} \cdots w_{q-1}\cdot X_{q}$.  It is convenient to arrange the designated variables $x_{r,s}$ in two strips in $1-1$ correspondence with the designated elements $\alpha_{r,s} \in E(A)$. Finally, we perform the alternations and symmetrizations on the variables and obtain (by construction) a multilinear nonidentity of $E(A)$.

We denote the polynomial by $p$. We summarize the paragraph above in the following proposition.

\begin{proposition}
Let $A$ be a finite dimensional $\mathbb{Z}_{2}$-graded algebra over $F$. Suppose $A$ is full. Let $p=p_{A}$ be the polynomial constructed above. Then $p$ is a nonidentity of $E(A)$. Furthermore, $p \in \langle f_{0} \rangle_{T}$ where $f_{0} = X_{1}\cdot w_{1} \cdots w_{q-1}\cdot X_{q}$.
\end{proposition}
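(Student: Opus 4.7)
The plan is to verify the two claims separately. The nonidentity assertion will follow from exhibiting the explicit evaluation that the construction was designed to preserve, while the $T$-ideal membership is a formal consequence of how $p$ was assembled from $f_{0}$ by endomorphisms of the free algebra.

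For the first claim, I would fix the admissible evaluation assigning $x_{r,s} \mapsto \alpha_{r,s}$, each frame variable to the adjacent idempotent of the form $u_{e} \otimes e_{i,i}$, and each bridge $w_{s,s+1}$ to a radical value realizing $A_{1} J A_{2} \cdots J A_{q} \neq 0$. By construction the underlying monomial evaluates to a nonzero element of $E(A)$. The framing is engineered so that any nontrivial permutation of designated variables inside a single small set produces an unmatched pair of idempotent frames and therefore vanishes; hence the inner alternation of even designated variables and inner symmetrization of odd designated variables each reduce to $\pm 1$ times the original evaluation and remain nonzero. For the outer operators acting across the $k$ copies, the rows of the horizontal strip carry $k$ equal even values of the form $u_{e} \otimes e_{i,j}$, so symmetrization simply multiplies the result by $k!$; the columns of the vertical strip carry values of the form $\epsilon_{i,j,g} \otimes u_{g} \otimes e_{i,j}$ differing only in a distinguishing odd Grassmann generator, and the sign acquired when commuting those generators past one another inside $E$ exactly cancels the sign of the alternator, so every permutation again contributes identically. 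Thus the outer operations multiply the original nonzero evaluation by a nonzero scalar of the form $\pm (k!)^{d_{0}+d_{1}}$, and $p$ has a nonzero admissible evaluation on $E(A)$.

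For the second claim, observe that before any alternation or symmetrization the underlying multilinear monomial $\Omega$ is obtained from $f_{0} = X_{1}\cdot w_{1} \cdots w_{q-1}\cdot X_{q}$ by the substitution $X_{i} \mapsto X_{i}\cdot \Delta_{i}$, where $\Delta_{i} = Z_{i,1}\cdots Z_{i,k}$ is a word in fresh designated and frame variables and the bridge variables $w_{j}$ are left unchanged. Since any $T$-ideal is closed under algebra endomorphisms of $F\langle X \rangle$, we have $\Omega \in \langle f_{0} \rangle_{T}$. The alternators and symmetrizers are $F$-linear combinations of further variable-renaming endomorphisms applied to $\Omega$, so the resulting polynomial $p$ again belongs to $\langle f_{0} \rangle_{T}$.

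The subtle point I expect to require the most attention is the sign bookkeeping when alternating the $k$ columns of odd designated variables: one has to verify that the Grassmann sign obtained from moving the distinguishing generators $\epsilon_{i,j,g}$ past one another aligns with the sign of the Young alternator rather than opposing it. This forces the choice of these generators to be distinct from one another and from every other odd Grassmann factor appearing elsewhere in the chosen evaluation, and it is the reason why tensoring each odd basis element with a fresh $\epsilon$ was built into the construction from the start.
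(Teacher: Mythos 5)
Your proof is correct and follows essentially the same approach as the paper: exhibit the explicit admissible evaluation, observe that the inner alternators/symmetrizers kill every nontrivial permutation via frame mismatch, observe that the outer alternators/symmetrizers act on positions carrying equal (or Grassmann-twinned) values so that each one contributes a factor of $k!$ with the Grassmann sign canceling the alternator sign, and note that $p$ is a linear combination of variable substitutions applied to $f_0$ and hence lies in $\langle f_0\rangle_T$. One small remark: the nonzero scalar is indeed $(k!)^{d_0+d_1}$ as you write (the paper's text records $(k!)^{d_0}$, which appears to be a typo, and no $\pm$ is actually needed since identity permutations and the Grassmann/alternator sign cancellation both contribute $+1$).
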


\begin{proposition}\label{k large enough}
For $k$ large enough, the polynomial $p$ is strongly full of $E(A)$.

\end{proposition}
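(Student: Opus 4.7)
The plan is to extend the affine-case strategy to $E(A)$ by combining two parity-sensitive identities in the Grassmann envelope with a pigeonhole argument across the $k$ slices. I would choose $k>\nu$, where $\nu$ is the nilpotency index of $J$, so that also $E(J)^{\nu}\subseteq E\otimes J^{\nu}=0$.

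The two Grassmann identities I would exploit are: (i) alternation of $d_{0}$ variables substituted by even semisimple values $\epsilon_{i}\otimes z_{i}$ (with $\epsilon_{i}\in E_{0}$ and $z_{i}\in\beta_{ss,0}$) reduces, by the commutativity of even Grassmann elements, to $(\epsilon_{1}\cdots\epsilon_{d_{0}})\otimes\sum_{\pi}\sgn(\pi)\,z_{\pi(1)}\cdots z_{\pi(d_{0})}$, which is nonzero only when $\{z_{i}\}$ spans $A_{ss,0}$; (ii) dually, symmetrization of $d_{1}$ variables substituted by odd semisimple values $\epsilon'_{j}\otimes w_{j}$ reduces, by the anticommutativity of odd Grassmann elements, to $(\epsilon'_{1}\cdots\epsilon'_{d_{1}})\otimes\sum_{\tau}\sgn(\tau)\,w_{\tau(1)}\cdots w_{\tau(d_{1})}$, which is nonzero only when $\{w_{j}\}$ spans $A_{ss,1}$. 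These identities are precisely why $p$ is constructed to alternate even-designated positions and symmetrize odd-designated positions within each slice.

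For any admissible nonzero evaluation $\phi$ of $p$, I would expand $p$ as a sum of monomials $\Omega_{\sigma}$ over the permutations arising from its alternations and symmetrizations, and fix a $\sigma$ with $\phi(\Omega_{\sigma})\neq 0$. If each of the $k$ slices had some designated variable evaluated radically, then $\Omega_{\sigma}$ would contain $\geq k>\nu$ factors from $E(J)$, forcing $\phi(\Omega_{\sigma})\in E(J)^{\nu}=0$, a contradiction. Hence some slice $i^{*}$ is free of radical designated values, and applying (i) and (ii) within $i^{*}$ shows that $\beta_{ss,0}$ and $\beta_{ss,1}$ are both fully represented among its values. Therefore every $z\in\beta_{ss}$ appears as the $A$-part of some variable's value, so $p$ is strongly full of $E(A)$.

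The main obstacle is that the clean identities (i) and (ii) require the values at even-designated (resp. odd-designated) positions to actually have even (resp. odd) $A$-parts, whereas an admissible evaluation a priori allows any parity at any position. I would resolve this by a parity-coherence argument: the combinatorics of the frame and bridge variables in $\Omega_{\sigma}$, together with the matrix-multiplicative compatibility needed for $\phi(\Omega_{\sigma})\neq 0$ at the relevant Wedderburn summand, force the parities at each designated position in the nonzero term to match the intended design; mixed-parity configurations either produce matrix products that vanish or contribute sign-cancelling partners in the full alternation sum. The cross-slice symmetrizations (of rows of the horizontal strip) and alternations (of columns of the vertical strip) pose no difficulty, since they only permute which physical slice plays which combinatorial role in the full expansion and do not affect the within-slice pigeonhole argument.
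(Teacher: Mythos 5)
Your proposal correctly identifies the Grassmann identities (i) and (ii) and the nilpotency pigeonhole that produces a radical-free slice, both of which match the paper. But there is a genuine gap at precisely the point you flag as "the main obstacle," and your proposed resolution is not correct. The frames and bridges are themselves variables, and an admissible evaluation constrains only which basis elements of $E(A)$ they receive; compatibility of the matrix-unit factors (the ``Wedderburn-multiplicative'' structure) is blind to the $\mathbb{Z}_2$-degree of the $A$-component. Concretely, placing $\epsilon\otimes(u_{\sigma}\otimes e_{a,b})$ (odd) at an even-designated slot, with the neighbouring frame variables evaluated to $1\otimes(u_e\otimes e_{a,a})$ and $1\otimes(u_e\otimes e_{b,b})$, gives a perfectly nonzero matrix product — nothing in the frame/bridge combinatorics kills it, and within a single slice the alternation of the $d_0$ even-designated positions does not detect a wrong-parity intruder either. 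So parity coherence within the chosen slice $i^{*}$ does not follow from what you invoke.

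The paper's mechanism is exactly the cross-slice operations you dismiss in your last paragraph as ``posing no difficulty.'' They are the whole point: the rows of the horizontal strip are symmetrized \emph{across} the $k$ columns, and the columns of the vertical strip are alternated across the $k$ rows. If more than finitely many columns of the horizontal strip carry at least one odd value, the pigeonhole principle produces two positions in the same (cross-slice-symmetrized) row whose values are $\epsilon_{i_1}\cdots\epsilon_{i_n}\otimes a$ and $\epsilon_{j_1}\cdots\epsilon_{j_m}\otimes a$ with the same $a$ and $n,m$ odd; the two odd Grassmann factors anticommute, while the symmetrization adds the transposed term, so the pair cancels. The dual argument (alternation of columns of the vertical strip versus commuting even Grassmann factors) bounds the number of rows of the vertical strip containing even or radical values. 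Only after these bounds does one get, for $k$ large, a column of the horizontal strip that is purely even and a row of the vertical strip that is purely odd, with a common index $i=j$, and only then do your identities (i) and (ii) apply. Your statement that the cross-slice symmetrizations and alternations ``do not affect the within-slice pigeonhole argument'' is therefore backwards: without them the within-slice argument cannot get started, because parity coherence is never established.
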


\begin{proof}
Suppose this is not the case. We claim that only in a bounded number of columns in the horizontal strip of the diagram we can put either radical elements or odd semisimple elements. Indeed, it is clear that the number of radical values is bounded. If we put arbitrary many odd semisimple values, by the pigeonhole principle, there will be variables in the same row which will get values of the form $\epsilon_{i_1}\cdots\epsilon_{i_n} \otimes a$ and $\epsilon_{j_1}\cdots\epsilon_{j_m}\otimes a$, same $a$, where $n$ and $m$ are odd. Then the symmetrization of the corresponding variables yields zero. Similarly, in any nonzero evaluation, the number of rows in the vertical strip of the diagram in which we can put radical or even elements is bounded. It follows then that for large enough $k$ there exists a column, say the $i$th column, in the horizontal strip which assumes only even elements and there is a $j$th row in the vertical strip which assumes only odd elements. But more than that, taking $k$ large enough we may assume $i = j$. It follows that by the alternation of the columns in the horizontal strip (resp. symmetrization of the rows in the vertical strip), in any nonzero evaluation, we are forced to evaluate these on basis elements of the form $\epsilon_{i_1}\cdots\epsilon_{i_n} \otimes a$ where $a$ runs over a full basis of $A_{ss,0}$ (resp. $A_{ss,1}$). This proves the first part of the lemma. For the second part of the lemma, we start with an arbitrary multilinear weakly full of $E(A)$ and we fix a nonzero admissible evaluation $\Phi$ of $f_{0}$ which visits all $\mathbb{Z}_{2}$-graded simple components of $A_{ss}$. Denote by $X_{1},\ldots,X_{q}$ the variables of $f_{0}$ which assume values from the $q$ different. Applying the $T$-operation we replace the variables $X_{1}, \ldots ,X_{q}$ with $X_{1}\Delta_{1}, \ldots, X_{q}\Delta_{q}$ where $\Delta_{t} = $$Z_{t,1}\cdots Z_{t,k}$. Finally we alternate and symmetrize the designated variables as above. The polynomial obtained $p \in \langle f_{0} \rangle_{T}$ is strongly full for the algebra $E(A)$. The proof is similar to the proof above when $f_{0}$ is a monomial. Details are omitted. This completes the proof of Proposition \ref{k large enough} and also of the Main Lemma.

\end{proof}

\begin{theorem}
Let $B$ be a finite dimensional super algebra and suppose it does not cover the super algebra $A$. Let $p = p_{A}$ be the polynomial constructed above. Then $p$ is an identity of $E(B)$.
\end{theorem}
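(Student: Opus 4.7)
The plan is to mirror the affine Main Lemma (Lemma~\ref{principal lemma}) in the super-graded setting, using the polynomial $p=p_{A}$ just constructed. I argue by contradiction: suppose $p$ admits a nonzero admissible evaluation $\Phi$ on $E(B)$, and fix a monomial of $p$ on which $\Phi$ does not vanish.

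The first step is to locate, for $k$ sufficiently large, a ``clean slice'' of the evaluation. Call a column of the horizontal strip \emph{dirty} if some entry receives a radical value or an odd semisimple value, and call a row of the vertical strip dirty if some entry receives a radical value or an even semisimple value. The number of dirty columns is bounded by the nilpotency index of $J_{B}$ (from radical values) plus $d_{0}\cdot\dim_{F}B_{ss,1}$: indeed, if a row contained more than $\dim_{F}B_{ss,1}$ odd semisimple values, two of them would share the same $B_{ss,1}$-part, and the $k$-row-symmetrization of the horizontal strip would then annihilate the monomial by the Grassmann sign picked up when two odd elements of $E(B)$ are interchanged. An analogous bound applies to dirty rows of the vertical strip, this time using that two even semisimple values with a common $B_{ss,0}$-part cancel under the $k$-column-alternation. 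For $k$ strictly larger than the sum of these bounds, there exists an index $t \in \{1, \ldots, k\}$ such that column $t$ of the horizontal strip carries only values $\epsilon_{i_{1}}\cdots\epsilon_{i_{n}}\otimes b$ with $b\in B_{ss,0}$, while row $t$ of the vertical strip carries only values $\epsilon_{j_{1}}\cdots\epsilon_{j_{m}}\otimes b$ with $b\in B_{ss,1}$.

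Next I extract the structural information from the slice $t$. Each block $Z_{i,t}$ is a product of designated basis elements of the single simple component $A_{i}$, surrounded by frames, and contains no radical factor; hence every factor of $Z_{i,t}$ must be evaluated in the Grassmann envelope of a single $\mathbb{Z}_{2}$-graded simple summand $B_{\pi(i)}$ of $B_{ss}$, defining $\pi:\{1,\ldots,q\}\to\{1,\ldots,s\}$. The block $Z_{i,t}$ contributes $a_{0,i}=\dim_{F}A_{i,0}$ even designated positions and $a_{1,i}=\dim_{F}A_{i,1}$ odd designated positions to the slice. The within-column alternation of the horizontal strip now forces the $b$-parts of the $d_{0}$ even designated values in column $t$ to be pairwise distinct, since two equal $b$-parts produce cancelling terms (the Grassmann prefixes of even degree being central in $E$); dually, the within-row symmetrization of the vertical strip forces the $b$-parts of the $d_{1}$ odd designated values in row $t$ to be pairwise distinct. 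Collecting by the partition of $\{1,\ldots,q\}$ induced by $\pi$ yields, for every $j=1,\ldots,s$,
\[
\sum_{i\in \pi^{-1}(j)} a_{0,i}\leq b_{0,j}, \qquad \sum_{i\in \pi^{-1}(j)} a_{1,i}\leq b_{1,j},
\]
which is precisely the statement that $p_{B}$ covers $p_{A}$, contrary to the hypothesis. Therefore $p$ is an identity of $E(B)$.

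The main obstacle is the Grassmann sign bookkeeping: in each of the four possible combinations of (alternation or symmetrization) with (even or odd value parity) one has to decide whether repeated $B_{ss}$-parts cause cancellation, and then line these up with the pigeonhole count to obtain entire clean columns and rows rather than merely clean individual positions. Once this is established, the remainder of the argument follows the affine case, and the extension from the initial monomial $f_{0}=X_{1}\cdot w_{1}\cdots w_{q-1}\cdot X_{q}$ to an arbitrary weakly full polynomial of $E(A)$ proceeds exactly as in the second half of the proof of Lemma~\ref{principal lemma}: fix a nonzero evaluation of $f_{0}$ visiting every $\mathbb{Z}_{2}$-graded simple component of $A_{ss}$, apply the $T$-operation to replace each $X_{i}$ by $X_{i}\Delta_{i}$, and perform the same alternations and symmetrizations on the resulting polynomial.
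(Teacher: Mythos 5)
Your proof is correct and follows the same approach as the paper's (much terser) argument, and you have done the Grassmann sign bookkeeping more explicitly than the paper does. One small imprecision: after extracting a clean slice $t$ you assert that each block $Z_{i,t}$ ``contains no radical factor,'' but your definition of a dirty column/row only tracks the \emph{designated} positions, not the frame positions of the slice; a frame variable in slice $t$ could still receive a radical value, in which case the designated values of that segment need not all lie in a single simple component $B_{\pi(i)}$ and $\pi$ is not well defined. This is easily repaired by also counting slices containing a radical frame value (again bounded by the nilpotency index of $J_B$) and taking $k$ correspondingly larger, matching the paper's use of ``augmented small sets'' in the affine Main Lemma.
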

\begin{proof}
The proof is similar to the proof in the affine case. In any nonzero evaluation on $E(B)$ we must have an index $i$ which obtains linearly independent semisimple elements of $B$. If the evaluation is nonzero, we must have a monomial with nonzero value and hence the semisimple elements appearing in each segment must come from the same $\mathbb{Z}_{2}$-graded simple component of $B$. We have then that $B$ covers $A$. Contradiction.
\end{proof}

\begin{corollary}
Let $A$ and $B$ full super algebras. If $E(A)$ and $E(B)$ are PI-equivalent, then their semisimple parts, $A_{ss}$ and $B_{ss}$ are isomorphic.
\end{corollary}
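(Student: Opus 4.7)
The plan is to mirror the affine corollary's proof, adapted to the super setting. Starting from the hypothesis that $E(A)$ and $E(B)$ are PI-equivalent, I would apply the Main Lemma in both directions. Since $A$ is a full super algebra, it admits a weakly full polynomial of $E(A)$, for instance the monomial $f_0 = X_1 w_1 \cdots w_{q-1} X_q$ visiting every super simple component. The Main Lemma then produces a strongly full polynomial $p_A \in \langle f_0 \rangle_T$ which is a nonidentity of $E(A)$ and, by the theorem preceding the corollary, vanishes on $E(B)$ whenever $B$ does not cover $A$. Since $E(A) \equiv_{PI} E(B)$, the polynomial $p_A$ must be a nonidentity of $E(B)$, forcing $B$ to cover $A$; symmetrically $A$ covers $B$.

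Next I would establish antisymmetry of the super covering relation. Let $p_A = (p_{A,0}, p_{A,1})$ with entries $(a_{0,j}, a_{1,j})$ and $p_B = (p_{B,0}, p_{B,1})$ with entries $(b_{0,i}, b_{1,i})$. Summing the defining inequalities of each covering gives $\sum_j a_{0,j} = \sum_i b_{0,i}$ and $\sum_j a_{1,j} = \sum_i b_{1,i}$, so each partition is sum-tight simultaneously in the even and odd dimensions. In particular the single-tuple covering of Note \ref{antysymmetric and majorization} holds mutually between the even tuples, between the odd tuples, and between the total-dimension tuples, so these three multi-sets agree. Combined with simultaneous sum-tightness, an inductive peeling argument (take a pair $(a_{0,j}, a_{1,j})$ with maximal total dimension, trace it through both partitions, observe that it forces a singleton block on both sides, then delete it and recurse) yields that the multi-sets of pairs $\{(a_{0,j}, a_{1,j})\}$ and $\{(b_{0,i}, b_{1,i})\}$ coincide.

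Finally, I would check that the pair $(a_0, a_1)$ of even and odd dimensions uniquely determines a super simple $F$-algebra up to super isomorphism. The classification used earlier gives exactly two families: $M_{l,f}(F)$ with dimension pair $(l^2 + f^2, 2lf)$, and $FC_2 \otimes M_n(F)$ with pair $(n^2, n^2)$. The unordered pair $\{l, f\}$ is recovered from $l^2 + f^2$ and $2lf$ via $(l\pm f)^2$, and $M_{l,f}(F) \cong M_{f,l}(F)$ as super algebras, so the first family is classified by its dimension pair. The equation $(l^2 + f^2, 2lf) = (n^2, n^2)$ forces $l = f$ and $2l^2 = n^2$, which is impossible in positive integers, so the two families are distinguished by dimensions. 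Matching super simple components individually then gives a super isomorphism $A_{ss} \cong B_{ss}$.

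The step I expect to be most delicate is the super antisymmetry: one must rule out configurations in which two distinct simple components of $A$ are merged into a single simple component of $B$ without either the even bound or the odd bound being tight on its own. Simultaneous sum-tightness in both gradings, combined with the maximum-element induction, is exactly what excludes this; once it is in hand, the rest of the argument is essentially the same as in the affine corollary.
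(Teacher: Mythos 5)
Your proposal is correct and follows the same route as the paper: deduce mutual covering from the Main Lemma, conclude that the multi-sets of dimension pairs coincide, and then invoke the classification of $\mathbb{Z}_2$-graded simple algebras via $(\dim A_0,\dim A_1)$. The one step the paper leaves implicit — antisymmetry of the super covering relation — is exactly the maximum-total-dimension peeling induction you spell out, and it does go through as you describe.
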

\begin{proof}
Indeed, $B$ and $A$ cover each other. It follows that the tuple of pairs of \textit{dimensions} of the simple components of $A$ and $B$ coincide (up to a permutation). Finally we note (see below) that the super structure of a super simple algebra $A$ is determined by the dimensions of $A_{0}$ and $A_{1}$ and hence if these coincide, $A_{ss}$ and $B_{ss}$ must be isomorphic as super algebras.
\end{proof}
For the rest of the proof, we follow step by step the proof in the affine case. Along the proof two basic propositions are needed.

\begin{proposition}
Let $A$ be a finite dimensional super algebra over $F$. Then $E(A)$ is PI-equivalent to the direct sum of algebras $E(A_{i})$ where $A_{i}$ is a finite dimensional full super algebra.
\end{proposition}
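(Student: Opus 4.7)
The plan is to mimic the argument in the affine case by induction on the number $q$ of super simple summands of $A_{ss}$. The base case $q\leq 1$ is immediate: if $q=0$ then $A$ is nilpotent (and vacuously equals $E(A_{1})\oplus\cdots$ with no summands, or one trivially ``full'' summand), and if $q=1$ then $A$ is already full. For the inductive step, assume $A$ is not full. For each $i=1,\ldots,q$, set
\[
\mathcal{A}_{i} \;=\; \langle A_{1},\ldots,A_{i-1},A_{i+1},\ldots,A_{q};\,J_{A}\rangle,
\]
which is a $\mathbb{Z}_{2}$-graded subalgebra of $A$ with strictly fewer super simple summands than $A$. It then suffices to establish that $E(A)$ is PI-equivalent to $E(\mathcal{A}_{1})\oplus\cdots\oplus E(\mathcal{A}_{q})$, for then applying the inductive hypothesis to each $\mathcal{A}_{i}$ writes $E(A)$ as PI-equivalent to a direct sum of Grassmann envelopes of finite dimensional full super algebras.

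For the PI-equivalence, one inclusion is automatic: each $\mathcal{A}_{i}$ is a graded subalgebra of $A$, so $E(\mathcal{A}_{i})\hookrightarrow E(A)$ as ordinary algebras, giving $Id(E(A))\subseteq \bigcap_{i}Id(E(\mathcal{A}_{i}))=Id(\oplus_{i}E(\mathcal{A}_{i}))$. For the reverse inclusion, let $f$ be a multilinear nonidentity of $E(A)$ and fix a nonzero admissible evaluation $\Phi$ of $f$ on $E(A)$, whose values lie in $\Sigma_{ss}\cup \Sigma_{J}$. If some super simple component $A_{i}$ is not visited by $\Phi$ (that is, no value $\epsilon_{i_{1}}\cdots\epsilon_{i_{n}}\otimes z$ with $z\in A_{i}$ appears) then every value of $\Phi$ lies in $E(\mathcal{A}_{i})$, so $f$ is a nonidentity of $E(\mathcal{A}_{i})$ and we are done. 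Otherwise every nonzero admissible evaluation of $f$ on $E(A)$ visits every super simple component, so $f$ is weakly full for $E(A)$. Pick any nonzero monomial in such an evaluation; it reads as $\pm(\epsilon_{i_{1}}\cdots\epsilon_{i_{N}})\otimes (z_{1}z_{2}\cdots z_{m})$ where the $z_{j}$'s belong to the basis of $A_{ss}\cup J_{A}$ and touch every super simple component of $A_{ss}$. Nonvanishing forces $z_{1}z_{2}\cdots z_{m}\neq 0$ in $A$, which up to a reordering is precisely $A_{1}\cdot J_{A}\cdot A_{2}\cdots J_{A}\cdot A_{q}\neq 0$ after extracting one semisimple factor per component. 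This contradicts the hypothesis that $A$ is not full, completing the reverse inclusion.

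The main obstacle, and the only point requiring attention, is the passage from ``$f$ has a nonzero admissible evaluation visiting all super simple components of $E(A)$'' to ``$A$ itself is full.'' The subtlety is that admissible values carry Grassmann prefactors, which contribute a sign but no extra vanishing beyond the condition that their product of distinct generators be nonzero. Since these Grassmann prefactors decouple multiplicatively from the $A$-part of the tensor, nonvanishing of the product in $E(A)$ forces nonvanishing of the corresponding product in $A$, and the argument reduces to the ungraded one. Once this is checked, the induction proceeds cleanly and yields the claimed decomposition of $E(A)$ as a PI-equivalent direct sum of Grassmann envelopes of full finite dimensional super algebras.
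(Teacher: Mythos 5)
Your proof is correct, but it follows a genuinely different route from the paper's. The paper's proof is a two-line transfer argument: it cites the known fact that the finite dimensional \emph{super} algebra $A$ is super-PI-equivalent to a direct sum $\mathfrak{A}=A_1\oplus\cdots\oplus A_n$ of full super algebras (the super analogue of the affine proposition already proved in Section 2), then applies the Grassmann $*$ correspondence ($f\in Id_{\mathbb{Z}_2}(A)\Leftrightarrow f^*\in Id_{\mathbb{Z}_2}(E(A))$) to conclude $E(A)$ and $E(\mathfrak{A})=\bigoplus E(A_i)$ have the same super identities, hence the same ordinary identities. You instead rerun the induction of the affine case directly on $E(A)$, showing $E(A)$ is PI-equivalent to $\bigoplus_i E(\mathcal{A}_i)$ by analyzing admissible evaluations in the Grassmann envelope. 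This buys self-containment (no appeal to the $*$-machinery or to the super version of the decomposition result), at the cost of having to reprove, inside $E(A)$, that a nonidentity vanishing on all $E(\mathcal{A}_i)$ would force $A$ full. Your observation that the Grassmann prefactors decouple multiplicatively — so nonvanishing of a monomial in $E(A)$ forces nonvanishing of the underlying product in $A$ — is exactly the point that makes this work, and is correct since $(\varepsilon\otimes z)(\varepsilon'\otimes z')=\varepsilon\varepsilon'\otimes zz'$ in $E\otimes A$.

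Two small expository cautions. First, in the reverse-inclusion paragraph, after fixing one $\Phi$ you write ``Otherwise every nonzero admissible evaluation of $f$ on $E(A)$ visits every super simple component''; this doesn't follow from the single $\Phi$ visiting all components, and in fact you don't need it — one such $\Phi$ already gives the contradiction. Better to argue by contradiction from the start: if $f$ is an identity of every $E(\mathcal{A}_i)$ then $\Phi$ cannot avoid any component, hence visits all, and the monomial argument gives fullness of $A$. Second, the passage ``which up to a reordering is precisely $A_1\cdot J_A\cdots J_A\cdot A_q\neq 0$ after extracting one semisimple factor per component'' compresses a real step: the nonzero word $z_1\cdots z_m$ may visit some components several times. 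One has to pick the first occurrence of each component, note that the block of factors between two consecutive first-occurrences must contain a radical (else one gets a product $A_iA_j=0$ with $i\neq j$), and use that $J$ is an ideal so that block lies in $J$. With that spelled out, the extraction is correct; as written it is terse enough that a reader could worry a Hamiltonian-path obstruction lurks there.
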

\begin{proof}
Recall that a finite dimensional super algebra $A$ is PI-equivalent to the direct sum of full super algebras $\mathfrak{A} = A_{1}\oplus \cdots \oplus A_{n}$. We claim firstly: $E(A)$ and $E(\mathfrak{A})$ are PI-equivalent: Indeed, a super polynomial $f$ is an identity of $A$ if and only if the super polynomial $f^{*}$ is a super identity of $E(A)$ as a super algebra where the $0$ component is spanned by elements of the form $\epsilon_{i_1}\cdots \epsilon_{i_2r} \otimes a_{0}$ and the $1$-component is spanned by elements of the form $\epsilon_{i_1}\cdots \epsilon_{i_{2r+1}} \otimes a_{1}$. Here $\epsilon_{j}$ is a generator of $E$, $a_{0} \in A^{0}$,$a_{1} \in A^{1}$, the even and odd elements of $A$ respectively. Then, if $E(A)$ and $E(\mathfrak{A})$ are PI-equivalent as super algebras, they are PI-equivalent as ungraded algebras. Next we argue that $E(\mathfrak{A}) \cong E(A_{1}) \oplus \cdots \oplus E(A_{n})$ and the proposition is proved.
\end{proof}

The second statement we need is

\begin{proposition}
Let $A$ and $B$ be finite dimensional super simple algebras over $F$. If $dim_{F}(A^{0}) = dim_{F}(B^{0})$ and $dim_{F}(A^{1}) = dim_{F}(B^{1})$ then $A$ and $B$ are $\mathbb{Z}_{2}$-graded isomorphic.
\end{proposition}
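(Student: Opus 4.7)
The plan is to reduce everything to the classification of finite dimensional $\mathbb{Z}_{2}$-graded simple algebras over $F$ that was recalled just before the weakly/strongly full definitions in this section. That classification says that any such algebra is isomorphic (as a super algebra) to exactly one of the following: either $M_{l,f}(F)$ with the elementary grading given by a tuple of $l$ symbols $e$ and $f$ symbols $\sigma$, or $FC_{2}\otimes M_{n}(F)$ with the grading in which $u_{e}\otimes e_{i,j}$ is even and $u_{\sigma}\otimes e_{i,j}$ is odd. Once this is in hand, I would just compute the pair $(\dim A^{0},\dim A^{1})$ in each case and show that $(d_{0},d_{1})$ pins down both the type and the parameters.

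For type $M_{l,f}(F)$ a direct count gives $\dim A^{0}=l^{2}+f^{2}$ and $\dim A^{1}=2lf$, so $\dim A^{0}+\dim A^{1}=(l+f)^{2}$ and $\dim A^{0}-\dim A^{1}=(l-f)^{2}$; thus the unordered pair $\{l,f\}$ is determined by $(d_{0},d_{1})$. Moreover, $M_{l,f}(F)\cong M_{f,l}(F)$ as super algebras, since conjugation by the permutation matrix swapping the two blocks intertwines the two elementary gradings, so the unordered pair is enough to determine the super algebra up to graded isomorphism. For the second type $FC_{2}\otimes M_{n}(F)$ we have $\dim A^{0}=\dim A^{1}=n^{2}$, so $n$ is determined by $d_{0}$ (or $d_{1}$) alone.

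It remains to check that these two families produce disjoint sets of pairs $(d_{0},d_{1})$. Clearly $d_{0}\ne d_{1}$ forces $M_{l,f}(F)$ with $l\ne f$, so the only delicate case is $d_{0}=d_{1}$, where in type one we would need $(l-f)^{2}=0$ giving $d_{0}=d_{1}=2l^{2}$, while type two would require $d_{0}=d_{1}=n^{2}$. These cannot coincide because $2l^{2}=n^{2}$ has no solutions in positive integers ($\sqrt{2}$ is irrational), so the two families are distinguished by $(d_{0},d_{1})$.

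The main (very small) obstacle I anticipate is verifying the graded isomorphism $M_{l,f}(F)\cong M_{f,l}(F)$ and making sure no subtlety is hidden in the classification statement I am quoting, e.g. that the algebraic closure of $F$ really does prevent twisted forms of $FC_{2}\otimes M_{n}(F)$ from appearing as separate isomorphism classes. Both points are standard consequences of the Wedderburn theory for super algebras over an algebraically closed field of characteristic zero, so the argument reduces to the two lines of arithmetic above.
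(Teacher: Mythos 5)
Your proof is correct and follows essentially the same route as the paper: list the two families $M_{l,f}(F)$ and $FC_{2}\otimes M_{n}(F)$, compute $(\dim A^{0},\dim A^{1})$ in each case, and observe that this pair separates the two families and pins down the parameters within each. The paper phrases the separation via the total dimension being a perfect square $(l+f)^{2}$ in one case and $2n^{2}$ (not a square) in the other, while you reduce to the equation $2l^{2}=n^{2}$ in the borderline case $d_{0}=d_{1}$; these are equivalent. Your extra remark that $(d_{0}+d_{1},d_{0}-d_{1})=((l+f)^{2},(l-f)^{2})$ determines $\{l,f\}$ as an unordered pair, and that $M_{l,f}(F)\cong M_{f,l}(F)$ as super algebras, fills in a step the paper leaves implicit, so if anything your write-up is slightly more complete.
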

\begin{proof}
Recall that a $\mathbb{Z}_{2}$-graded simple algebra over an algebraically closed field $F$ of characteristic $0$ is isomorphic to $M_{l,f}(F)$, where $l \geq 1, f \geq 0$ or $FC_{2} \otimes_{F} M_{n}(F)$, $n\geq 1$. In the case of $M_{l,f}(F)$ the dimension of the $0$-component (resp. $1$-component) is $l^{2} + f^{2}$ (resp. $2lf$) and in particular the total dimension is an integer square whereas in the case of  $FC_{2} \otimes_{F} M_{n}(F)$ the dimensions of the homogeneous components are each equal to $n^{2}$ and hence not an integer square. This proves the proposition.
\end{proof}

\section{$G$-graded algebras}\label{Section G-graded algebras}
In this section we extend the main theorem to the setting of $G$-graded algebras where $G$ is a finite group. Here is the precise statement.

\begin{theorem}\label{uniquely determined graded affine}
Let $G$ be a finite group and let $\Gamma$ be a $G$-graded $T$-ideal over $F$. Suppose $\Gamma$ contains an ungraded Capelli polynomial $c_{n}$, some $n$. Then there exists a finite dimensional semisimple $G$-graded algebra $U$ over $F$ which satisfies the following conditions.

\begin{enumerate}

\item
There exists a finite dimensional $G$-graded algebra $A$ over $F$ with $Id_{G}(A) = \Gamma$ and such that $A \cong U \oplus J_{A}$ is its Wedderburn-Malcev decomposition as $G$-graded algebras.

\item

If $B$ is any finite dimensional $G$-graded algebra over $F$ with $Id_{G}(B) = \Gamma$ and $B_{ss}$ is its maximal semisimple $G$-graded subalgebra, then $U$ is a direct summand of $B_{ss}$ as $G$-graded algebras.

\end{enumerate}
\end{theorem}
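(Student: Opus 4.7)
The plan is to mirror the argument of Section~\ref{Section preliminaries and proof of the affine case} step by step in the $G$-graded setting, invoking the $G$-graded extension of Kemer's theory from \cite{AB}. The proof will proceed through four stages: set up the correct notion of covering for $G$-graded semisimple algebras (this is the part that requires real new input); prove the $G$-graded Main Lemma by constructing $G$-graded strongly full polynomials; establish the $G$-graded analogue of the presentation of a $G$-graded finite dimensional algebra as a direct sum of $G$-graded full subalgebras and of the fusion lemma; and finally run the operations of type $0$--$4$ verbatim to extract the minimal semisimple $G$-graded algebra $U$.

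First I would recall that a finite dimensional $G$-graded simple algebra over $F$ is of the form $M_{n}(F^{\alpha}[H])$ with an elementary $G$-grading determined by a tuple in $G^{n}$ (modulo the obvious equivalence), where $H\leq G$ and $\alpha\in H^{2}(H,F^{*})$. As the authors flag in the introduction, the $G$-graded iso type is a genuinely finer invariant than the tuple of homogeneous dimensions, so the covering relation must be refined accordingly: I define $B$ to $G$-cover $A$ if there is a partition of the simple summands of $A_{ss}$ indexed by the simple summands of $B_{ss}$ such that for each block the direct sum of the corresponding simple summands of $A_{ss}$ admits a $G$-graded injective algebra homomorphism into the corresponding simple summand of $B_{ss}$. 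A direct argument shows this relation is antisymmetric, and restricts to the ungraded covering (resp. super covering) of Section~\ref{Section preliminaries and proof of the affine case} (resp. Section~\ref{nonaffine algebras section}) when $G$ is trivial (resp. $\mathbb{Z}_{2}$). Fullness and weak/strong fullness of $G$-graded polynomials are defined exactly as before, using a $G$-homogeneous basis of each simple summand and a $G$-homogeneous basis of the radical.

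The next stage is the $G$-graded Main Lemma: if $A$ and $B$ are $G$-graded full and $B$ does not $G$-cover $A$, then every $G$-graded weakly full polynomial $f_{0}$ of $A$ generates a $G$-graded strongly full polynomial $f_{A}\in\langle f_{0}\rangle_{T,G}$ which vanishes on $B$. I would build $f_{A}$ as in Section~\ref{Section preliminaries and proof of the affine case}, but performing the alternation of designated variables separately on each $G$-homogeneous layer of $A_{ss}$ (in the spirit of the symmetrization/alternation trick used for the odd/even layers in Section~\ref{nonaffine algebras section}). Then in any nonzero $G$-graded admissible evaluation on $B$, at least one augmented small set of a fixed $G$-component is forced to assume linearly independent values in a single $G$-graded simple summand of $B_{ss}$; doing this in each $G$-homogeneous layer of $A_{ss}$ in parallel produces a $G$-graded injection from a simple summand of $A_{ss}$ into a simple summand of $B_{ss}$, which (after bookkeeping over all summands) shows $B$ $G$-covers $A$. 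This step is the main obstacle: one must verify that the $G$-graded linear independence obtained in each homogeneous layer assembles to an actual $G$-graded algebra embedding, and not merely to a collection of dimension inequalities.

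With the $G$-graded Main Lemma in place, the rest of the argument transcribes almost word-for-word from Section~\ref{Section preliminaries and proof of the affine case}. The presentation of a $G$-graded finite dimensional algebra as a direct sum of $G$-graded full subalgebras with $(T_{i})_{ss}$ a $G$-graded direct summand of $A_{ss}$ follows from the $G$-graded analogue of the construction in \cite{AGPR}, and the fusion lemma extends to two $G$-graded full algebras sharing a $G$-graded simple summand. The operations of type $0$ (deletion), $1$ (fusion), $2$ (decomposition), $3$ (absorption) and $4$ (merging) are defined as in Section~\ref{Section preliminaries and proof of the affine case} using $G$-covering, the numerical counters $r_{A}$ and $n_{A}$ descend strictly in the lexicographic order under any nontrivial operation, and the process terminates with a $G$-graded presentation $\bar A$ satisfying the $G$-graded analogues of Corollary~\ref{final result after operations}. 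The uniqueness of $\bar A_{part}$ as a $\Gamma$-invariant follows from the $G$-graded Main Lemma applied to the maximal element of the symmetric difference of $\bar A_{part}$ and $\bar B_{part}$, and the final merging step produces the desired $G$-graded semisimple algebra $U$ together with a $G$-graded algebra structure on $U\oplus J$ realizing $\Gamma$, exactly as in the closing argument of Section~\ref{Section preliminaries and proof of the affine case}.
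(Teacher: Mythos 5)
Your overall architecture (run Steps $0$--$4$ from Section~\ref{Section preliminaries and proof of the affine case} against a $G$-graded covering relation) matches the paper's plan, and you correctly identify the crux: $G$-graded simple algebras are not determined by the tuple of homogeneous dimensions. However, the way you propose to resolve this --- by strengthening the \emph{covering relation} to require $G$-graded algebra embeddings and then claiming that a nonzero evaluation of the alternated/framed monomial produces such an embedding --- does not work, and the ``main obstacle'' you flag is a genuine gap, not a verification step.

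The alternation argument in Lemma~\ref{principal lemma} only ever yields \emph{linear} information: a nonzero admissible evaluation forces the designated values of one small set to be linearly independent and to lie in a single $G$-graded simple summand of $B_{ss}$, which gives dimension inequalities in each homogeneous degree. It does not give that the map sending a designated element of $A_{ss}$ to its value in $B_{ss}$ respects multiplication, because the monomial $Z_i$ tests products only along one path through a basis. In fact your proposed Main Lemma is false as stated. Take $A=F^{\alpha}G$ and $B=F^{\beta}G$ with $\alpha\neq\beta$ in $H^{2}(G,F^{*})$ (the paper's own first example): these have identical homogeneous dimensions and are not $G$-graded isomorphic, so $B$ does not ``$G$-cover'' $A$ in your strong sense; yet every strongly full polynomial of $A$ built by the framed-monomial-plus-alternation recipe has a nonzero evaluation on $B$, since the products of the $u_g$'s differ from those in $A$ only by nonzero scalars. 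So the strong covering relation cannot be detected by these polynomials alone.

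The paper closes this gap with genuinely new input beyond the affine case: it keeps the covering relation purely dimensional (Definition 4.4), proves the dimensional Main Lemma, and then, to promote matching dimensions to actual $G$-graded isomorphism, inserts $e$-central polynomials $m_i(X_G)$ (from \cite{KarasikGcentral}) alongside each monomial $Z_{i,l}$. These $m_i$ are chosen (using the finiteness of the set of $G$-graded simple algebras with prescribed homogeneous dimensions, from \cite{AljKarsikAzumaya}) to vanish on every $G$-graded simple $\Sigma$ with the same homogeneous dimensions as $A_i$ but with $Id_G(A_i)\nsupseteq Id_G(\Sigma)$. A nonzero evaluation of the enhanced polynomial on $B$ then forces $Id_G(A_i)\supseteq Id_G(B_{\sigma(i)})$ for some permutation $\sigma$, and a symmetry argument (applying the same to $B$ against $A$, composing the permutations, and using finiteness of the orbit) upgrades this containment to equality, hence to $G$-graded isomorphism via \cite{AljHaile}. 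Without the $e$-central insertion and this two-step ``containment then symmetry'' argument, the proposal does not reach the conclusion; you should either incorporate this machinery or find an alternative that can distinguish $F^{\alpha}G$ from $F^{\beta}G$ by a nonidentity.
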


The proof basically follows the main lines of the proof of the ungraded case but yet there is a substantial obstacle due to the fact that $G$-graded simple algebras are \textit{not determined} up to isomorphism by the dimensions of the corresponding homogeneous components. In the following examples, as usual, $F$ is an algebraically closed field of characteristic zero.
\begin{example}

1. If $G$ is a finite group, $F^{\alpha}G$ and $F^{\beta}G$, $\alpha, \beta \in H^{2}(G, F^{*})$, are twisted group algebras, then they are $G$-graded isomorphic if and only if $\alpha = \beta$. Clearly, the dimensions of the homogeneous components equal $1$ independently of the cohomology class.

2. Let $G = \{e, \sigma, \tau, \sigma\tau\}$ be the Klein $4$-group. We  consider the crossed product grading on $A \cong M_{4}(F)$, that is the elementary grading determined by the tuple $(e, \sigma, \tau, \sigma\tau)$ and also the algebras $B_{i} \cong F^{\beta_{i}}G \otimes M_{2}(F)$, $\beta_{1}, \beta_{2}\in H^{2}(G,F^{*})$. Here $\beta_{1}$ (resp. $\beta_{2})$ is the trivial (resp. nontrivial) cohomology class on $G$ with values on $F^{*}$.  The dimension of each homogeneous component is $4$. It is easy to show the algebras $A, B_{1}, B_{2}$ are nonisomorphic as $G$-graded algebras (see \cite{AljHaile}). Note however that $A \cong B_{2} \cong M_{4}(F)$ and $B_{1} \cong M_{2}(F) \oplus M_{2}(F) \oplus M_{2}(F) \oplus M_{2}(F)$ as ungraded algebras.

\end{example}

Let $G$ be a finite group and let $A$ be a finite dimensional $G$-graded algebra over $F$. We decompose $A$ into $A_{ss} \oplus J$ where $A_{ss}$ is a maximal $G$-graded semisimple algebra which supplements $J$, the Jacobson radical. The algebra $A_{ss}$ decomposes into a direct product of $G$-graded simple components $A_{1} \times \cdots \times A_{q}$. As in the ungraded case, the $G$-graded simple components are uniquely determined up to a $G$-graded isomorphism.

We start with the definition of the covering relation.

\begin{definition}
Let $Q$ and $V$ be finite dimensional $G$-graded semisimple algebras over $F$. We say $V$ covers $Q$ if the $G$-graded simple components of $Q$ can be decomposed into subsets such that the sum of the dimensions of the corresponding homogeneous components are bounded by the dimensions of the homogeneous components of $V$. More precisely, if $Q \cong Q_{1}\times \cdots \times Q_{q}$ and $V \cong V_{1} \times \cdots \times V_{r}$ are the decompositions of $Q$ and $V$ into their $G$-graded simple components. Let $u_{i,g} = dim_{F}(Q_{i})_{g}$ (resp. $v_{j,g} = dim_{F}(V_{j})_{g}$) denote the dimension of the $g$-homogeneous component of $Q_{i}$ (resp. of $V_{j}$). Then $V$ covers $Q$ if and only if the indices $1,\ldots,q$ can be decomposed into $r$ subsets $\Lambda_{1}, \ldots, \Lambda_{r}$ such that $\sum_{i \in \Lambda_{j}}u_{i,g} \leq v_{j,g}$.

\end{definition}

\begin{definition}
Let $A$ be a finite dimensional $G$-graded algebra over $F$. Let $A \cong A_{1} \times \cdots \times A_{q} \oplus J_{A}$ be its Wedderburn-Malcev decomposition, where $A_{i}$ are $G$-graded simple. We say $A$ is full if up to a permutation of the indices we have $A_{1}\cdot J\cdots J\cdot A_{q} \neq 0$.
\end{definition}

Next we introduce $G$-graded strongly full polynomials for a given full algebra $A$. These are multilinear nonidentities $f$ of $A$ which vanish when evaluated on $A$ unless every basis element of $A_{ss}$ appears as a value of one of its variables. These polynomials were constructed in \cite{AB}. Nevertheless, we shall need their precise structure so let us recall here their construction.

For each $G$-graded simple component $A_{i}$ of $A$ consider a \textit{nonzero} product of all basis elements of $A_{i}$. These are elements of the form $u_{h}\otimes e_{i,j}$, where the homogeneous degree is $g_{r}^{-1}hg_{s}$. Here, the grading on $A_{i}$ is determined by a presentation $P_{A_i}$ (see \cite{BSZ} and for the precise notation see \cite{AljHaile} Theorem $1.1$). It is known that such a product exists (see \cite{AB}). As above, we border from right and left each basis element with frames of the form $u_{e} \otimes e_{i,i}$. We denote such product of basis elements, namely of designated and frame elements, by $Z_{i}$. We refer to $Z_{i}$ as the monomial of basis elements of $A_{i}$. We may assume the product starts with an element of the form $u_{e} \otimes e_{1,1}$ and ends with an element of the form $u_{h}\otimes e_{r,1}$ and so if $Z_{i,l} = Z_{i}$, $l=1,\ldots,k$, we have that the product $Z_{i,1}\cdots Z_{i, k}$ is nonzero. Next we bridge products corresponding to different $G$-graded simple components by radical (homogeneous) elements $w_{i}$. We obtain a nonzero product

 $$Z_{1,1}\cdots Z_{1, k}\cdot w_{1} Z_{2,1}\cdots Z_{2, k}\cdot w_{2}\cdots w_{q-1}\cdot Z_{q,1}\cdots Z_{q, k}.$$
As in the ungraded case we consider the $i$th set $\Lambda_{i}$, $i =1,\ldots,k$ consisting of the designated (semisimple) elements in $Z_{1,i},\ldots,Z_{q,i}$. We denote by $\Lambda_{i, g}$, $g\in G$, the subset of $\Lambda_{i}$ consisting of elements of homogeneous degree $g$. We claim any nontrivial permutation of designated elements in $\Lambda_{i, g}$ yields a zero product. To see this we note that if $u_{h_1}\otimes e_{r_1,s_1} \neq u_{h_2}\otimes e_{r_2,s_2}$ are basis elements of \textit{equal} homogeneous degree, that is $g_{r_1}^{-1}h_1 g_{s_1} = g_{r_2}^{-1}h_2 g_{s_2}$, we must have $(r_{1}, s_{1}) \neq (r_{2}, s_{2})$. This implies that frames bordering different designated elements of the same homogeneous degree are different (also for those elements that belong to the same $G$-graded simple component). This proves the claim.

We proceed as in the ungraded case where the monomials consisting of elements of $A$ are replaced by monomials of different graded variables with the corresponding homogeneous degree. The \textit{small} sets here are alternating sets of variables of degree $g \in G$ of cardinality equal the dimension of the $g$-homogeneous component of $A_{ss}$. The polynomial obtained is denoted by $p$. This completes the construction of a $G$-graded strongly full polynomial of $A$. As in previous cases we shall need a more general statement.

\begin{definition}
Let $A$ be a finite dimensional full $G$-graded algebra. A multilinear $G$-graded polynomial is weakly full if it admits an admissible nonzero evaluation on $A$ which visits each $G$-graded simple subalgebra of $A_{ss}$. The definitions of a \textit{full} and \textit{strongly full} $G$-graded polynomial are similar to the definitions in the affine case.
\end{definition}
\begin{proposition}
Let $A$ be a full $G$-graded algebra and $f_{0}$ a $G$-graded multilinear polynomial which is weakly full of $A$. Then there exists a multilinear $G$-graded strongly full polynomial $f$ such that $f \in \langle f_{0} \rangle_{T}$.
\end{proposition}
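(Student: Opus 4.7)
The plan is to mimic the $T$-operation-and-alternation scheme already used in the affine and super cases, adapted to the $G$-graded setting where alternation is only allowed among variables of equal homogeneous degree. Fix a nonzero admissible evaluation $\Phi$ of $f_0$ on $A$ which visits every $G$-graded simple component of $A_{ss}$. Since $\Phi(f_0) \neq 0$, at least one monomial of $f_0$ is $\Phi$-nonzero; fix such a monomial and write it as $R = R(X_1,\ldots,X_q; M)$, where $X_1,\ldots,X_q$ are the variables assuming $\Phi$-values in the distinct $G$-graded simple components $A_1,\ldots,A_q$ of $A_{ss}$, and $M$ collects the remaining variables of $R$.

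Next, apply the $T$-operation and substitute $X_i \mapsto X_i \cdot \Delta_i$ in $f_0$, where $\Delta_i$ is a polynomial in fresh designated and frame variables patterned on the product $Z_{i,1}\cdots Z_{i,k}$ constructed just above the statement; here $k$ will be chosen to exceed the nilpotency index of $J_A$. The resulting multilinear polynomial $\Omega = f_0(X_1\Delta_1,\ldots,X_q\Delta_q; M)$ lies in $\langle f_0 \rangle_T$. Now, for each $r = 1,\ldots,k$ and each $g \in G$, alternate the designated variables of homogeneous degree $g$ that appear across the $r$th copies $Z_{1,r},\ldots,Z_{q,r}$; this yields an alternating set of cardinality $\dim_F (A_{ss})_g$. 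Let $f$ denote the resulting $G$-graded multilinear polynomial, which manifestly lies in $\langle f_0 \rangle_T$.

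To verify that $f$ is a nonidentity of $A$, use $\Phi$ on the variables inherited from $f_0$ and evaluate the designated and frame variables of $\Delta_i$ by the basis monomial $Z_i$ of $A_i$ together with the prescribed diagonal frames. The key point, already noted in the discussion preceding the proposition, is that any nontrivial permutation of designated variables of a fixed degree $g$ vanishes: two distinct basis elements $u_{h_1}\otimes e_{r_1,s_1}$ and $u_{h_2}\otimes e_{r_2,s_2}$ of the same degree $g_{r_1}^{-1}h_1 g_{s_1} = g_{r_2}^{-1}h_2 g_{s_2}$ must differ in their row-column indices and hence have different bordering frames. Thus the alternation contributes a nonzero scalar multiple of the original nonzero value and $f$ is a nonidentity of $A$.

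It remains to show that $f$ is strongly full. Given any admissible nonzero evaluation $\Psi$ of $f$ on $A$, pick a monomial $\Psi$-nonzero monomial; since $k$ exceeds the nilpotency index of $J_A$, the pigeonhole principle produces an index $r \in \{1,\ldots,k\}$ such that the $r$th augmented small set (the designated variables in $Z_{1,r},\ldots,Z_{q,r}$ together with their bordering frames) receives no radical value. For that $r$, nonvanishing combined with the alternation in each degree $g \in G$ forces the degree-$g$ designated variables of the $r$th small set to be evaluated bijectively on a full basis of $(A_{ss})_g$; ranging over $g$ we conclude that a complete basis of $A_{ss}$ appears among the $\Psi$-values, so $f$ is strongly full. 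The main subtlety, which is already the content of the direct construction above the proposition, is precisely the frame argument guaranteeing that alternation within a fixed homogeneous degree does not collapse the evaluation; everything else is a faithful transcription of the affine argument in Lemma 2.7.
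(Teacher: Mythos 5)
Your proof is correct and follows essentially the same route as the paper: the paper's own proof simply refers back to part (3) of Theorem \ref{full polynomials existence}, whose argument is exactly the $T$-substitution $X_i \mapsto X_i\Delta_i$ followed by degreewise alternation, with the frame-matching observation guaranteeing nonvanishing and the $k >$ (nilpotency index) bound forcing a radical-free small set. Your transcription correctly adapts both ingredients to the $G$-graded setting.
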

\begin{proof}
The proof is similar to the proof of Theorem \ref{full polynomials existence}, part $3$.
\end{proof}
\begin{lemma}
Let $A$ be a $G$-graded full algebra and $p$ a $G$-graded strongly full polynomial on $A$ with sufficiently many small sets. If $B$ does not cover $A$, then $p$ is an identity of $B$.
\end{lemma}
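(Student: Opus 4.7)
The plan is to adapt the proof of Lemma \ref{principal lemma} and its super-algebra analogue to the $G$-graded setting, replacing the ungraded dimension count with a count of the $G$-graded homogeneous dimensions. I would argue by contradiction: suppose $\Phi$ is a nonzero admissible evaluation of $p$ on $B$, and fix a monomial $\Omega$ of $p$ with $\Phi(\Omega) \neq 0$. By construction $\Omega$ has the form $X_{1}\Delta_{1} w_{1} \cdots w_{q-1} X_{q}\Delta_{q}$ with $\Delta_{i} = Z_{i,1}\cdots Z_{i,k}$, and the $l$-th small set collects the designated variables coming from $Z_{1,l},\ldots,Z_{q,l}$, alternated separately within each degree $g \in G$.

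Since $k$ exceeds the nilpotency index of $J_{B}$, a pigeonhole argument identical to the one in Lemma \ref{principal lemma} produces an index $j \in \{1,\ldots,k\}$ such that every designated and frame variable of the $j$-th augmented small set receives a semisimple value under $\Phi$. Fixing such $j$, for each $i$ the segment $Z_{i,j}$ evaluates to a nonzero product of semisimple elements of $B_{ss} \cong B_{1}\times\cdots\times B_{r}$; because $B_{ss}$ is a direct product, all such values must lie in a single $G$-graded simple summand $B_{\lambda(i)}$. This defines a map $\lambda \colon \{1,\ldots,q\} \to \{1,\ldots,r\}$; setting $\Lambda_{s} = \lambda^{-1}(s)$, I then use the $G$-graded alternation in the $j$-th small set: for each $g \in G$, the degree-$g$ designated values must be linearly independent in $B_{g}$, and the segment $Z_{i,j}$ supplies exactly $dim_{F}(A_{i})_{g}$ such values landing in $(B_{\lambda(i)})_{g}$. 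Grouping by $\lambda$ yields
$$\sum_{i \in \Lambda_{s}} dim_{F}(A_{i})_{g} \;\leq\; dim_{F}(B_{s})_{g}$$
for every $s$ and $g$, which is exactly the covering relation $B$ covers $A$, contradicting the hypothesis.

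For a general weakly full $f_{0}$ I would replace the specific monomial above by an arbitrary monomial $R_{\sigma}$ of $f_{0}$ on which $\Phi$ is nonzero and apply the same alternation-and-counting argument to its $T$-extension, exactly as in the ungraded Main Lemma. The step I expect to be most delicate is verifying that the $G$-graded alternation truly forces linear independence of values in a given degree once the frames are in place; this relies on the observation that in a $G$-graded simple algebra, distinct basis elements of the same homogeneous degree have distinct matrix-index pairs, so that the $u_{e}\otimes e_{i,i}$ bordering frames mismatch for any nontrivial permutation of degree-$g$ designated values within a segment. This observation was already exploited in the construction of $p$ (it is precisely the reason the initial evaluation of $\Omega$ on $A$ is nonzero while any nontrivial permutation of $\Lambda_{i,g}$ gives zero), so no new combinatorial input is required here.
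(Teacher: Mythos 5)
Your proposal is correct and takes essentially the same route as the paper, which simply refers back to the proof of the ungraded Main Lemma (Lemma 2.14): pigeonhole on the nilpotency index to find a radical-free column $j$, observe that each segment $Z_{i,j}$ must land in a single $G$-graded simple component $B_{\lambda(i)}$, and then use the degree-$g$ alternation together with the frame-mismatch observation to force linear independence and hence the covering inequalities $\sum_{i \in \Lambda_s} \dim_F(A_i)_g \leq \dim_F(B_s)_g$. You correctly identify the one genuinely $G$-graded ingredient (distinct basis elements of equal homogeneous degree have distinct matrix-index pairs, so their bordering frames differ), which the paper establishes in the construction preceding the lemma; your write-up is a faithful expansion of the argument the paper leaves implicit.
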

\begin{proof}
The proof is similar to the proof of Lemma \ref{principal lemma}.
\end{proof}
Note that in the ungraded case this was sufficient for deducing that the semisimple subalgebras of $A$ and $B$ are isomorphic.
\begin{theorem}\label{isomorphic graded semisimple algebras}
Let $A$ and $B$ be finite dimensional $G$-graded full algebras. Suppose they are $G$-graded PI-equivalent. Then the maximal semisimple subalgebras $A_{ss}$ and $B_{ss}$ are $G$-graded isomorphic.
\end{theorem}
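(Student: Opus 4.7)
The plan is to apply the previous lemma to the $G$-graded PI-equivalence of $A$ and $B$ in both directions, which yields equality of the multi-sets of homogeneous dimension tuples of their graded simple components, and then to sharpen the construction of the strongly full polynomial so that it pins down the actual $G$-graded isomorphism class of each simple component, not merely its tuple of homogeneous dimensions.

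First, PI-equivalence together with the previous lemma implies that $A$ covers $B$ and $B$ covers $A$. The covering relation in the $G$-graded setting remains antisymmetric (this extends verbatim from the ungraded case), so we obtain a bijection between the graded simple summands $\{A_i\}$ of $A_{ss}$ and the graded simple summands $\{B_j\}$ of $B_{ss}$ under which corresponding components have identical tuples of homogeneous dimensions $(\dim_F (A_i)_g)_{g\in G}$.

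The substantive obstacle, emphasized just before the theorem and illustrated by the twisted group algebra examples, is that matching homogeneous dimensions does not imply $G$-graded isomorphism. To bridge this I would refine the construction of the strongly full polynomial $p_A$ so that the internal monomial $Z_i$ faithfully encodes the presentation $P_{A_i}$ of $A_i$. Recall that each designated basis element $u_h \otimes e_{r,s}$ in $Z_i$ has homogeneous degree $g_r^{-1} h g_s$ and is flanked by the frames $u_e \otimes e_{r,r}$ and $u_e \otimes e_{s,s}$. In an admissible nonzero evaluation of $p_A$ on $B$, the alternation in each small set of degree $g$ must realize a full basis of $(B_j)_g$ for a single graded simple summand $B_j$ of $B_{ss}$, and the forced matching of frames between consecutive designated variables then reproduces on $B_j$ the product pattern of $Z_i$. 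By the cohomological description of graded simple algebras (see \cite{BSZ}, \cite{AljHaile}), this product pattern determines the grading tuple and the cohomology class defining the algebra, thereby forcing $B_j \cong A_i$ as $G$-graded algebras.

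Applying this to each component in the bijection from the first step yields $A_{ss} \cong B_{ss}$ as $G$-graded algebras. The hardest step is the rigidity claim in the previous paragraph: recovering the elementary grading tuple from the dimension data and the matching of degree-$g$ bases is essentially automatic, but recovering the cohomology class requires that the interlocking of consecutive designated variables via their frames be reproduced faithfully on $B_j$. This is precisely the role played by the choice of $Z_i$ as a nonzero product of \emph{all} basis elements of $A_i$, rather than an arbitrary monomial of basis elements, and is the point at which the crossed-product structure of $A_i$ really enters the argument.
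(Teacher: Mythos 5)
Your first step (mutual covering $\Rightarrow$ a bijection between graded simple summands with matching tuples of homogeneous dimensions) agrees with the paper's. After that, your argument has a genuine gap that the paper goes to considerable length to close.

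The gap is the rigidity claim: you assert that reproducing ``the product pattern of $Z_i$'' on $B_j$ forces $B_j \cong A_i$ as $G$-graded algebras, because the pattern ``determines the grading tuple and the cohomology class.'' This is not true, and it is precisely the obstruction the paper flags before the theorem. The nonzero product structure of a monomial of basis elements is a combinatorial (support-level) datum: for a $G$-graded simple algebra $F^{\alpha}H \otimes M_n(F)$ with presentation $P_{A_i}$, whether a given product of basis elements $u_h \otimes e_{r,s}$ is nonzero depends only on the frame/index bookkeeping, not on $\alpha$, since the cocycle takes values in $F^*$ and hence only rescales. Concretely, the alternation of each small set in $p_A$ leaves (at most) a single surviving monomial, and its value on $B_j$ is the value of that monomial's permutation on the $B_j$-basis, up to a nonzero scalar contributed by structure constants. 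If $B_j$ merely has the right homogeneous dimensions, that monomial will have a nonzero evaluation on $B_j$ — so $p_A$ cannot vanish on $B_j$ regardless of the cohomology class. The twisted group algebra example in the paper ($M_4(F)$ with a crossed product grading versus $F^{\beta_i}G \otimes M_2(F)$ with $G$ Klein-four) is exactly this situation: identical homogeneous dimensions, identical ``product patterns,'' different $G$-graded isomorphism type, and your polynomial would fail to separate them.

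The paper closes the gap by a genuinely different device. It constructs, for each $A_i$, a $G$-graded $e$-central polynomial $m_i(X_G)$ (building on the central polynomials of \cite{KarasikGcentral}) that is $e$-central on $A_i$ but vanishes on every $G$-graded simple $\Sigma$ with the same tuple of homogeneous dimensions whose $T$-ideal is not contained in $Id_G(A_i)$; finiteness of the relevant class of $\Sigma$ (from \cite{AljKarsikAzumaya}) and the invertibility of nonzero values of central polynomials permit taking a product over the $\Sigma$'s. These $m_i$ are inserted next to the blocks $Z_{i,l}$ in the strongly full polynomial; they provide a PI-theoretic, not combinatorial, discriminant for the graded isomorphism type. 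Even then, the paper still needs a final orbit argument (composing the two permutations obtained from covering in both directions to get a permutation $\rho$ on the $A_i$ with $Id_G(A_i) \supseteq Id_G(A_{\rho(i)})$ and hence equality along orbits), and, in the abelian case, David's embedding theorem. None of this apparatus appears in your proposal, and without something like the $e$-central polynomial insertion, the claim that the monomial/frame structure alone detects the cohomology class is incorrect.
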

\begin{proof}

By the preceding lemma we know that $A$ and $B$ cover each other and hence the tuples of the dimensions of the homogeneous components of the $G$-graded simple algebras appearing in the decomposition of $A_{ss}$ and $B_{ss}$ are equal. Our goal is to show the corresponding $G$-graded simple components are $G$-graded isomorphic.

For the proof we shall need to insert \textit{suitable} $e$-central polynomial in the full $G$-graded polynomials of $A$ constructed above.
We recall from \cite{KarasikGcentral} that every finite dimensional $G$-graded simple admits an $e$-central multilinear polynomial $c_{A}$, that is a nonidentity of $A$, central and $G$-homogeneous of degree $e$.  Furthermore, it follows from its construction, that the polynomial $c_{A}$ alternates on certain sets of variables of equal homogeneous degree of cardinality equal $dim_{F}(A_{g})$, for every $g \in G$. For the proof of Theorem \ref{isomorphic graded semisimple algebras} we shall need $e$-central polynomials with some additional properties.

\begin{theorem}
Let $A_{i}$, $i=1,\ldots,q$, be the simple components of $A_{ss}$. Then there exists a polynomial $m_{i}(X_{G})$ with the following properties.

\begin{enumerate}
\item

$m_{i}(X_{G})$ is $e$-central of $A_{i}$.

 \item

$m_{i}(X_{G})$ is an identity of every algebra $\Sigma$ which satisfies the following conditions.

\begin{enumerate}
\item
$\Sigma$ is finite dimensional $G$-graded simple.
\item
$dim_{F}(\Sigma_{g}) = dim_{F}((A_{i})_{g})$ for every $g \in G$.
\item
$Id_{G}(A_{i}) \nsupseteq Id_{G}(\Sigma)$.

\end{enumerate}
\end{enumerate}

\end{theorem}

\begin{proof}
By our assumption $(2c)$, there is a $G$-graded homogeneous nonidentity $f_{i,\Sigma}$ of $A_{i}$, of homogeneous degree $g \in G$ say, which vanishes on $\Sigma$ and so replacing a variable of degree $g$ in an alternating set of $c_{A_{i}}$ by $f_{i,\Sigma}$ yields a nonidentity $e$-central polynomial $m_{i, \Sigma}(X_{G})$ of $A_{i}$ which vanishes on $\Sigma$. Now, we recall from \cite{AljKarsikAzumaya} that the number of $G$-graded simple algebras $\Sigma$ satisfying conditions $(2a,2b)$ above is finite and so, because the nonzero values of $m_{i,\Sigma}(X_{G})$ are invertible $(in F^{*})$, we have that $m_{i}(X_{G}) = \Pi_{\Sigma}m_{i,\Sigma}(X_{G})$ is an $e$-central polynomial of $A$ with the desired properties.
\end{proof}

Finally we insert a polynomial with disjoint sets of variables $m_{i}(X_{G})$ adjacent to each monomial $Z_{i,l}$. This completes the construction of the strongly full polynomial $p$.

We can complete now the proof of the Theorem \ref{isomorphic graded semisimple algebras}. We are assuming the algebras $A$ and $B$ are PI-equivalent and so by Theorem 4.4, the algebras $A$ and $B$ cover each other.
It follows that $A_{ss}$ and $B_{ss}$ have the same number of $G$-graded simple components. Furthermore, if $A_{ss} \cong A_{1} \times \cdots \times A_{q}$ and $B_{ss} \cong B_{1} \times \cdots \times B_{q}$ then there is a permutation $\sigma \in Sym(q)$ such that

\begin{enumerate}
\item
$dim_{F}((A_{i})_g) = dim_{F}((B_{\sigma(i)})_g)$, $i=1,\ldots,q$ and every $g \in G$.

\end{enumerate}

We claim there is a permutation of the $G$-graded simple components of $B_{ss}$ such that in addition to the condition above we have that
$Id_{G}(A_{i}) \supseteq Id_{G}(B_{\sigma(i)})$, $i = 1,\ldots, q$. Suppose not. Then for every permutation $\sigma$ satisfying the condition above there is a $j = j(\sigma)$ such that $Id_{G}(A_{j}) \nsupseteq Id_{G}(B_{\sigma(j)})$. We will show that the strongly full polynomial $p$ is an identity of $B$, in contradiction to the PI-equivalence of $A$ and $B$. Indeed, evaluating $p$ on $B$, the value will be zero unless there is a monomial $Z_{i}$, together with the inserted central polynomials, whose value is nonzero. This implies there is a permutation $\sigma$ of the components of $B_{ss}$ such that the $i$th segment of $p$ is evaluated on $B_{\sigma(i)}$. This already implies the condition above on the dimensions. But by assumption there is $j$ such that $Id_{G}(A_{j}) \nsupseteq Id_{G}(B_{\sigma(j)})$ and so the central polynomial $m_{j}(X_{G})$ vanishes on $B_{\sigma(j)}$.

We conclude there is a permutation $\sigma \in Sym(q)$ of the simple components of $B_{ss}$ such that
\begin{enumerate}
\item
$dim_{F}((A_{i})_g) = dim_{F}((B_{\sigma(i)})_g)$, $i=1,\ldots,q$, and every $g \in G$

\item

$Id_{G}(A_{i}) \supseteq Id_{G}(B_{\sigma(i)})$, $i = 1,\ldots, q$.
\end{enumerate}

Our goal is to show that in fact $Id_{G}(A_{i}) = Id_{G}(B_{\sigma(i)})$, $i = 1,\ldots, q$. Indeed, this implies what we need, that is $A_{i} \cong  B_{\sigma(i)}$, $i = 1,\ldots, q$, as $G$-graded algebras (see \cite{AljHaile}).

Suppose that $G$ is abelian. In that case let us recall the following general result of O. David (see \cite{DavidOfir}).

\begin{theorem} Let $G$ be a finite abelian group and let $A$ and $B$ finite dimensional $G$-graded simple algebras over an algebraically closed field $F$. Then $A \hookrightarrow B$ as $G$-graded algebras if and only if $Id_{G}(A) \supseteq Id_{G}(B)$.
\end{theorem}
Clearly, it follows at once from the theorem that $G$-graded algebras satisfying conditions $(1)$ and $(2)$ above must be $G$-graded isomorphic. David's result is not known in case $G$ is an arbitrary finite group.

Here instead, we argue as follows. By symmetry there is a permutation $\tau \in Sym(q)$ such that
\begin{enumerate}
\item
$dim_{F}((B_{i})_g) = dim_{F}((A_{\tau(i)})_g)$, $i=1,\ldots,q$ and every $g \in G$

\item

$Id_{G}(B_{i}) \supseteq Id_{G}(A_{\tau(i)})$, $i = 1,\ldots, q$
\end{enumerate}

Consequently there is a permutation $\rho \in Sym(q)$ such that $A_{i}$ and $A_{\rho(i)}$ have equal dimensions of the homogeneous components and $Id_{G}(A_{i}) \supseteq Id_{G}(A_{\rho(i)})$. We need to show equality holds. Indeed, we see that $Id_{G}(A_{i}) = Id_{G}(A_{j})$ for $i$ and $j$ which belong to the same orbit determined by $\rho$ and so, in particular $Id_{G}(A_{i}) = Id_{G}(A_{\rho(i)})$, $i=1,\ldots, q$.

\end{proof}

The remaining steps in the proof of Theorem \ref{uniquely determined graded affine} are similar to those in the proof of Theorem \ref{uniquely determined affine}. Details are omitted.

\section{PI-equivalence of Grassmann envelopes of finite dimensional $G_{2}$-graded algebras}

In this section we treat the case where the algebra $A$ is finite dimensional $\mathbb{Z}_{2}\times G$-graded and $E(A)$ is the Grassmann envelope of $A$ viewed as a $G$-graded algebra.

The main result in this case is the following.

\begin{theorem}\label{uniquely determined graded nonaffine}
Let $G$ be a finite group. Let $\Gamma$ be a $G$-graded $T$-ideal. Suppose $\Gamma$ contains a nonzero ungraded polynomial but contains no ungraded Capelli $c_{n}$, any $n$. Then there exists a finite dimensional semisimple $\mathbb{Z}_{2} \times G$-graded algebra $U$ over $F$ which satisfies the following conditions.

\begin{enumerate}

\item

There exists a finite dimensional $\mathbb{Z}_{2} \times G$-graded algebra $A$ over $F$ with $Id_{G}(E(A)) = \Gamma$ and such that $A \cong U \oplus J_{A}$ is its Wedderburn-Malcev decomposition as $\mathbb{Z}_{2} \times G$-graded algebras.

\item

If $B$ is any finite dimensional $\mathbb{Z}_{2} \times G$-graded algebra algebra over $F$ with $Id_{G}(E(B)) = \Gamma$ and $B_{ss}$ is its maximal semisimple $\mathbb{Z}_{2} \times G$-graded subalgebra, then $U$ is a direct summand of $B_{ss}$ as $\mathbb{Z}_{2} \times G$-graded algebras.

\end{enumerate}

\end{theorem}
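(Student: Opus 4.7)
The plan is to combine the Grassmann envelope techniques of Section \ref{nonaffine algebras section} with the $G$-graded techniques of Section \ref{Section G-graded algebras}, following the same overall architecture as in the previous two theorems. First I would extend the covering relation to the $\mathbb{Z}_2 \times G$-graded setting: for $Q \cong Q_1 \times \cdots \times Q_q$ and $V \cong V_1 \times \cdots \times V_r$ decomposed into $\mathbb{Z}_2 \times G$-graded simple components, declare $V$ to cover $Q$ if the indices $1,\ldots,q$ can be partitioned into $r$ blocks so that for every pair $(\varepsilon,g) \in \mathbb{Z}_2 \times G$ the sum of $dim_F(Q_i)_{(\varepsilon,g)}$ over each block is bounded by the corresponding dimension in the matching $V_j$. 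Fullness of a $\mathbb{Z}_2 \times G$-graded algebra is defined verbatim, and weakly full polynomials for $E(A)$ exist for the same reason as before.

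The main technical step, and the one I expect to be the principal obstacle, is the construction of a $G$-graded strongly full polynomial of $E(A)$ when $A$ is a finite dimensional full $\mathbb{Z}_2 \times G$-graded algebra. Here ``strongly full'' means that in every nonzero admissible evaluation on $E(A)$, every homogeneous basis element of $A_{ss}$ must be realized by some variable as a value of the form $\epsilon_{i_1}\cdots \epsilon_{i_n}\otimes z$. The construction would fuse three devices: (i) the horizontal and vertical strip construction of Section \ref{nonaffine algebras section}, with symmetrization on one parity and alternation on the other, so that any nonzero evaluation is forced to exhibit a complete homogeneous basis of $A_{ss,0}$ as well as of $A_{ss,1}$ in some small set; (ii) the alternation of designated variables of matching $G$-degree as in Section \ref{Section G-graded algebras}, exploiting the fact that within a single $G$-graded simple component, distinct homogeneous basis elements of the same $G$-degree are necessarily surrounded by distinct frames; and (iii) the insertion, adjacent to each monomial $Z_{i,l}$, of a $(0,e)$-homogeneous central polynomial $m_i(X_G)$ that distinguishes the simple component $A_i$ from every $\mathbb{Z}_2 \times G$-graded simple algebra whose homogeneous dimensions coincide with those of $A_i$ but whose $T$-ideal does not contain $Id_G(E(A_i))$. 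The subtle point is that the inserted $e$-central factor must be chosen $\mathbb{Z}_2$-even, in order not to disturb the parity bookkeeping of the Grassmann tensor factors used in (i); the existence of such a polynomial extends the $e$-central construction of \cite{KarasikGcentral} to the $\mathbb{Z}_2 \times G$-graded setting, using the finiteness result of \cite{AljKarsikAzumaya} for $\mathbb{Z}_2 \times G$-graded simple algebras with prescribed homogeneous dimensions.

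Once this strongly full polynomial is available, the analogue of the Main Lemma is direct: taking the number $k$ of small sets to exceed the nilpotency index of every radical under consideration, any nonzero evaluation on $E(B)$ forces some small set to be evaluated on linearly independent homogeneous values sitting inside a single $\mathbb{Z}_2 \times G$-graded simple component of $B$, whence $p_B$ covers $p_A$. Combined with the central factors, PI-equivalence of $E(A)$ and $E(B)$ as $G$-graded algebras forces first the multisets of tuples of homogeneous dimensions to coincide, and then, by the permutation and orbit argument used in the proof of Theorem \ref{isomorphic graded semisimple algebras}, a bijection between the simple components of $A_{ss}$ and $B_{ss}$ which respects both the homogeneous dimensions and the invariant $Id_G \circ E$. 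Invoking the classification of $\mathbb{Z}_2 \times G$-graded simple algebras over $F$, this bijection lifts to a $\mathbb{Z}_2 \times G$-graded isomorphism $A_{ss} \cong B_{ss}$, yielding the analogue in the present setting of Theorem \ref{isomorphic graded semisimple algebras}.

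The remaining argument is the reduction procedure of Section \ref{Section preliminaries and proof of the affine case}. I would apply the deletion, fusion, decomposition and absorption operations of Steps $0$--$3$ inside the family $\mathcal{M}_{\mathbb{Z}_2 \times G, \Gamma}$, using the Main Lemma above at every point where the corresponding ungraded lemma was invoked. The numerical counters $\Theta_A = (n_A, r_A)$ are defined verbatim, with $n_\sigma$ now summing over the homogeneous dimensions of a $\mathbb{Z}_2 \times G$-graded component, and the termination argument of Theorem \ref{algebra after steps 0-3} carries over without change. Step $4$ then extracts the desired minimal semisimple $\mathbb{Z}_2 \times G$-graded algebra $U$ and provides a presentation $A \in \mathcal{M}_{\mathbb{Z}_2 \times G, \Gamma}$ realizing $U$ as its semisimple summand.
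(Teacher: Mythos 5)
Your overall architecture is right and matches the paper's: extend the covering relation to $G_{2}=\mathbb{Z}_{2}\times G$, build strongly full $G$-graded polynomials of $E(A)$ by combining the strip construction of Section~\ref{nonaffine algebras section} with the $G$-degree alternation and central-polynomial insertion of Section~\ref{Section G-graded algebras}, prove a Main Lemma, and run Steps~$0$--$4$. The gap is at the point where you conclude the comparison of the $G_{2}$-graded simple components. You write that once one has a bijection between the components of $A_{ss}$ and $B_{ss}$ respecting the homogeneous dimensions and the invariant $Id_{G}\circ E$, ``invoking the classification of $\mathbb{Z}_{2}\times G$-graded simple algebras over $F$'' the bijection lifts to a $G_{2}$-graded isomorphism. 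That is exactly the content of Theorem~\ref{theorem:different G2 simples different envelopes}: for $G_{2}$-graded simple $A_{j}$, $B_{j'}$, equality $Id_{G}(E(A_{j}))=Id_{G}(E(B_{j'}))$ forces $A_{j}\cong B_{j'}$. It does \emph{not} follow from the Bahturin--Sehgal--Zaicev classification or from \cite{AljHaile} alone; those tell you a $G_{2}$-simple algebra is determined by its $G_{2}$-identities, whereas you are handed only the $G$-graded identities of its Grassmann envelope, and the paper cites an explicit example (\cite{GiamZaicevBook}, Section 8.2) showing that in general $G$-PI-equivalence of envelopes does not imply $G_{2}$-PI-equivalence of the underlying algebras.

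This is the passage the authors single out as the ``substantial difficulty'' to which ``nearly the entire section'' is devoted. The mechanism they construct is an operator $s_{k;\phi;A}$ (small-set alternation and symmetrization on both the $G$-even and $G$-odd pieces of $G_{2}$, together with symmetrization/alternation of the rows $X_{\phi}(a_{\bar g}^{(j)})$) followed by a forgetful operator $F_{G}^{G_{2}}$ that collapses the $\mathbb{Z}_{2}$-degree, and the key Proposition~\ref{proposition:forcing G evaluation} shows that any nonzero almost-$G_{2}$ evaluation of $F_{G}^{G_{2}}(s_{k;\phi;A}(f))$ is forced to respect the $\mathbb{Z}_{2}$-parity on all but boundedly many small sets. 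This is what converts a $G_{2}$-identity of a $G_{2}$-simple $B$ (obtained from \cite{AljHaile}), after wrapping in a monomial $M$ and applying the Grassmann star, into a $G$-identity of $E(B)$ that is a nonidentity of $E(A)$. Your device~(iii) implicitly assumes this transformation exists: a $(0,e)$-homogeneous $G_{2}$-central polynomial $m_{i}$ of $A_{i}$ \emph{\`a la} \cite{KarasikGcentral} does not by itself produce a $G$-graded polynomial that separates $E(A_{i})$ from $E(\Sigma)$, since $E(A_{i})$ is infinite-dimensional and not $G$-graded simple, and the passage from $Id_{G_{2}}$ to $Id_{G}\circ E$ is precisely what needs justification. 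Until Theorem~\ref{theorem:different G2 simples different envelopes} (or an equivalent) is established, the existence of the separating factors $p_{j,j'}$ that the final merging step relies on is unsupported.
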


The general approach is based on cases that were treated earlier, namely the cases where ($1$) $\Gamma$ is a $T$-ideal of identities of a $G$-graded affine algebra ($2$) $\Gamma$ is a $T$-ideal of identities of an ungraded nonaffine algebra. It turns out however, that also here there is a substantial difficulty, and this is in the very first step of the general approach (see Theorem \ref{theorem:different G2 simples different envelopes} below). In fact, nearly the entire section will be devoted to the proof of Theorem \ref{theorem:different G2 simples different envelopes}.

Before we state the theorem let us set some notation.

Let $G$ be a finite group and denote $G_{2}:=\mathbb{Z}_{2}\times G$.
We denote $G_{even}:=0\times G;G_{odd}=1\times G$ and similarly for
a $G_{2}$ algebra $A$ we write $A_{even}=A_{G_{even}};A_{odd}=A_{G_{odd}}$.

\begin{theorem}
\label{theorem:different G2 simples different envelopes}Suppose that
$A$ and $B$ are two finite dimensional $G_{2}$-graded simple algebras.
Then, $A$ and $B$ are $G_{2}$-graded isomorphic if and only if $E(A)$ and $E(B)$
have the same $G$-graded identities.
\end{theorem}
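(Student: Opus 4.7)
The forward direction is immediate: a $G_{2}$-graded isomorphism $\phi\colon A\to B$ extends via $\epsilon\otimes a\mapsto\epsilon\otimes\phi(a)$ to a $G$-graded isomorphism $E(A)\to E(B)$, whence $Id_{G}(E(A))=Id_{G}(E(B))$. For the converse my plan is to combine the two-strip Grassmann construction of Section \ref{nonaffine algebras section} with the $e$-central insertion technique of Section \ref{Section G-graded algebras}. First, for any finite dimensional $G_{2}$-graded algebra $A$ I would construct a $G$-graded strongly full polynomial of $E(A)$: for each $g\in G$ one introduces a horizontal small set of variables of $G$-degree $g$ whose admissible values lie in $E_{0}\otimes A_{(0,g)}$ (alternated across columns, symmetrized across rows) and a vertical small set of variables of $G$-degree $g$ whose admissible values lie in $E_{1}\otimes A_{(1,g)}$ (symmetrized across columns, alternated across rows); these are spliced into the designated--frame--bridge monomial produced by an arbitrary weakly full polynomial. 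By a pigeonhole argument in the spirit of Proposition \ref{k large enough}, enhanced by the $G$-graded alternations, taking $k$ large enough forces any nonzero admissible evaluation on $E(B)$ to exhibit a linearly independent spanning of every $B_{(\epsilon,g)}$. Since $A$ and $B$ are $G_{2}$-graded simple (hence full), $G$-graded PI-equivalence of $E(A)$ and $E(B)$ yields mutual covering in the $G_{2}$-sense, and therefore $\dim_{F}A_{(\epsilon,g)}=\dim_{F}B_{(\epsilon,g)}$ for every $(\epsilon,g)\in G_{2}$.

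The second stage addresses the obstacle highlighted at the start of Section \ref{Section G-graded algebras}: equal homogeneous dimensions do not determine $G_{2}$-graded simple algebras up to $G_{2}$-graded isomorphism. Here I would imitate the central-polynomial strategy of Theorem \ref{isomorphic graded semisimple algebras}. By the Bahturin--Sehgal--Zaicev-type classification \cite{BSZ}, only finitely many $G_{2}$-graded simple algebras $\Sigma$ share a given homogeneous-dimension tuple; for each such $\Sigma$ with $Id_{G_{2}}(\Sigma)\neq Id_{G_{2}}(A)$, a $G_{2}$-graded homogeneous nonidentity of $A$ that vanishes on $\Sigma$ yields, through the Grassmann envelope correspondence, a $G$-graded nonidentity $h_{\Sigma}$ of $E(A)$ vanishing on $E(\Sigma)$. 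Multiplying the $h_{\Sigma}$ across this finite list of offending $\Sigma$ and splicing the product into each small set (via an $e$-central polynomial for $E(A)$, obtained by adapting \cite{KarasikGcentral} to the Grassmann envelope), one produces a $G$-graded polynomial $p_{A}$ which is a nonidentity of $E(A)$ but an identity of $E(\Sigma)$ for every $\Sigma$ whose homogeneous-dimension tuple matches that of $A$ and which is not $G_{2}$-isomorphic to $A$. Applying this to $B$ and combining with the first stage forces $A\cong_{G_{2}}B$.

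\textbf{Main obstacle.} The hardest step is the second one: lifting a $G_{2}$-graded identity of $\Sigma$ that is a nonidentity of $A$ to a $G$-graded identity of $E(\Sigma)$ that is a nonidentity of $E(A)$. The correspondence between $G_{2}$-graded polynomials of $A$ and $G$-graded polynomials of $E(A)$ through the Grassmann envelope respects identities only after appropriate super-symmetrization, so the insertion of each $h_{\Sigma}$ will demand careful book-keeping of signs over repeated variables of the same $\mathbb{Z}_{2}$-degree. A secondary technical point is producing a $G$-graded $e$-central polynomial for $E(A)$ itself (rather than for a bare $G_{2}$-graded simple algebra), which is where the adaptation of \cite{KarasikGcentral} to the envelope setting will need the most care and almost certainly takes up the bulk of the section as the authors warn.
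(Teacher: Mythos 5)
Your forward direction is correct. The forward/backward decomposition and the idea of reducing to a separating polynomial via \cite{AljHaile} are also sound, and your ``stage 1'' (equal homogeneous $G_2$-dimensions from mutual covering) does match one consequence of the paper's Proposition \ref{proposition:forcing G evaluation}. However, your ``stage 2'' has a genuine gap precisely where you flag the main obstacle, and the mechanism you propose does not close it.

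You write that a $G_{2}$-graded nonidentity of $A$ vanishing on $\Sigma$ ``yields, through the Grassmann envelope correspondence, a $G$-graded nonidentity $h_{\Sigma}$ of $E(A)$ vanishing on $E(\Sigma)$.'' That is exactly the statement you need to prove and cannot simply assert: the Grassmann star gives a clean dictionary between $G_{2}$-graded identities of $A$ and $G_{2}$-graded (equivalently, super) identities of $E(A)$, but the theorem needs a separation in terms of ordinary $G$-graded identities, where the $\mathbb{Z}_{2}$-component of the degree has been forgotten and an adversarial evaluation may assign values of the ``wrong'' parity. Nothing in your stage 2 controls that parity. Your backup plan --- inserting $e$-central polynomials for $E(A)$ built by adapting \cite{KarasikGcentral} --- is also problematic: \cite{KarasikGcentral} produces $e$-central polynomials for finite dimensional $G$-graded simple algebras, whereas $E(A)$ is infinite dimensional and is in general not $G$-graded simple (it carries the Grassmann ``thickening''), so there is no obvious $e$-central polynomial to insert, and the pigeonhole arguments you invoke do not by themselves recover the lost $\mathbb{Z}_{2}$-degree information.

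The paper's proof avoids the two-stage reduction entirely. Starting from a separating multilinear $G_{2}$-polynomial $p$ (identity of $B$, nonidentity of $A$) with a fixed basis evaluation $\phi(p)=\delta$ an idempotent, it builds a single polynomial $f=M'\cdot p^{*}$, where $M'$ is a monomial forcing $\phi$ to be essentially the unique nonzero admissible evaluation and $p^{*}$ is the Grassmann star of $p$. It then applies the row/column alternation--symmetrization operator $s_{k;\phi;A}$ and the forgetful functor $F_{G}^{G_{2}}$. The entire weight of the argument is carried by Proposition \ref{proposition:forcing G evaluation}, which shows that in any nonzero almost-$G_{2}$ evaluation on $E(B)$, all but a bounded number of small sets are forced to take values of the correct $\mathbb{Z}_{2}$-parity; combined with $p$ being a $G_{2}$-identity of $B$, this kills every almost-$G_{2}$ evaluation on $E(B)$ for $k$ large enough, while the diagonal evaluation shows the polynomial is a nonidentity of $E(A)$. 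In short, the paper does not first normalize dimensions and then chase the finitely many $G_{2}$-simple structures via central polynomials; it encodes the separating $G_{2}$-polynomial $p$ directly into the alternated/symmetrized monomial and then proves that the $\mathbb{Z}_{2}$-parities are forced. Without some analogue of this forcing proposition, your proposal does not reach the conclusion.
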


It is worth noting that the Grassmann $*$ operation allows one to
pass from a super identity of $A$ to a super identity of $E(A)$ (resp. from a $G_{2}$-identity of $A$ to a $G_{2}$-identity of $E(A)$). The challenge here lies in transforming a super identity of $A$ into an ordinary identity of $E(A)$ (resp. from a $G_{2}$-identity of $A$ into a $G$-identity of $E(A)$).

The main part of the proof
of the above Theorem is to find such a transformation. We start this
section with the construction of the transformation and in Proposition
\ref{proposition:forcing G evaluation} we show the key property that makes
it work. We emphasize that the construction and also the Theorem are
guaranteed to work only in the case where the algebras in question
are finite dimensional $G_{2}$-graded simple. In the general case it is not true that if $E(A)$
and $E(B)$ have the same $G$-graded identities then $A$ and $B$ have
the same $G_{2}$ identities. An example can be found in (\cite{GiamZaicevBook}, Section $8.2$).

The construction we are about to show is a generalization of the construction
in Section \ref{nonaffine algebras section}  and in fact, the main property of this construction appearing
in Proposition \ref{proposition:forcing G evaluation} is clearly true for
the previous construction. However, with the previous construction we can
only show that if $E(A)$ and $E(B)$ have the same $G$-identites
then $\dim A_{\bar{g}}=\dim B_{\bar{g}}$ for all $\bar{g}$. As pointed out earlier, for
general groups $G$ one can easily find examples of nonisomorphic
$G_{2}$-graded simple algebras having this property.

Let $f=f(X_{0};Y_{0})$ be a multilinear $G_{2}$-graded polynomial,
where
$$X_{0}=\coprod_{\bar{g}\in G_{2}}\coprod_{i=1}^{T}X_{\bar{g},i}$$
is a union of $T$ \textit{small sets} of degree $\bar{g}$-variables
$X_{\bar{g},i}=\{x_{\bar{g},i}^{(1)},\dots,x_{\bar{g},i}^{(\dim A_{\bar{g}})}\}$
(here $\bar{g}$ runs over all of $G_{2}$), and $Y_{0}=\coprod_{\bar{g}\in G_{2}}Y_{\bar{g},0}$
are some additional variables. Assume that $f$ has a $G_{2}$-graded
evaluation $\phi:F\left\langle X_{0};Y_{0}\right\rangle \to A$ with
the following properties:
\begin{enumerate}
\item For every nontrivial permutation $\sigma\in\prod_{\bar{g}\in G_{2}}\prod_{i=1}^{T}S_{X_{\bar{g},i}}$
(here $S_{W}$ is the symmetric group on the set $W$) the value of
$f(\sigma(X_{0});Y_{0})$ under the evaluation $\phi$ is $0$.
\item For all $\bar{g}\in G_{2}$ the value $\phi(x_{\bar{g},i}^{(j)})=:a_{\bar{g}}^{(j)}$
is independent of $i=1,\dots,T$. Furthermore, all $a_{\bar{g}}^{(j)}$
are distinct.
\end{enumerate}
We will see later that in the case relevant to the proof of Theorem
\ref{theorem:different G2 simples different envelopes} it is indeed possible
to construct such a polynomial.

Let $k>0$ be a natural number and consider the polynomial
\[
f_{k}:=f(X_{1};Y_{1})\cdots f(X_{k};Y_{k}),
\]
where all $X_{t}$ and $Y_{t}$ are disjoint copies of $X_{0}$ and
$Y_{0}$ respectively. Notice that $X_{t}=\coprod_{\bar{g}\in G_{2}}\coprod_{i=1}^{T}X_{\bar{g},(t-1)T+i}$.
We extend $\phi$ to $F\left\langle X;Y\right\rangle $, where $X=\coprod_{t=1}^{k}X_{t}$
and $Y=\coprod_{t=1}^{k}Y_{t}$, by duplicating the evaluation on
$X_{0}$ and $Y_{0}$ to $X_{t}$ and $Y_{t}$ respectively (for all
$t=1,\dots,k$). As a result, we have in particular for all $\bar{g},i$
and $j$ that $\phi(x_{\bar{g},i}^{(j)})=a_{\bar{g}}^{(j)}$ (we rely
here on property (2)).

For $a\in A$ we set $X_{\phi}(a)\subset X$ to be all the variables
from $X$ which $\phi$ assigns to them the value $a$. In other words,
$X_{\phi}(a)=(\phi|_{X})^{-1}(a)$. In particular, $X_{\phi}(a_{\bar{g}}^{(j)})=\{x_{\bar{g},1}^{(j)},\dots,x_{\bar{g},kT}^{(j)}\}$.
\begin{remark}
\label{remark:table structure}For every $\bar{g}_{0}\in G_{2}$ we have
\[
\coprod_{i=1}^{kT}X_{\bar{g}_{0},i}=\coprod_{j=1}^{\dim A_{\bar{g}}}X_{\phi}(a_{\bar{g}_{0}}^{(j)}).
\]
One should visualize this equality as ``union of columns'' (the
$X_{\bar{g}_{0},i}$'s) = ``union of rows'' (the $X_{\phi}(a_{\bar{g}_{0}}^{(j)})$)
in the matrix
\[
\begin{bmatrix}x_{\bar{g},1}^{(1)} &  & \cdots &  & x_{\bar{g},kT}^{(1)}\\
\\
\vdots &  & x_{\bar{g},i}^{(j)} &  & \vdots\\
\\
x_{\bar{g},1}^{(\dim A_{\bar{g}})} &  & \cdots &  & x_{\bar{g},kT}^{(\dim A_{\bar{g}})}
\end{bmatrix}.
\]
\end{remark}

Next, we alternate and symmetrize different subsets of $X$ in the
following fashion to obtain a new graded polynomial $s_{k;A}$. For
each even (odd) element $\bar{g}\in G_{2}$ we apply alternation (symmetrization)
on all variables $X_{\bar{g},i}$; afterwards we apply symmetrization
(alternation) for every set of variables of the form $X_{\phi}(a)$.
All in all, we have

\bigskip
$s_{k;\phi;A}(f)  =$

$$\prod_{\bar{g}\in G_{odd}}\prod_{j=1}^{\dim A_{\bar{g}}}Alt_{X_{\phi}(a_{\bar{g}}^{(j)})}\circ\prod_{\bar{g}\in G_{even}}\prod_{j=1}^{\dim A_{\bar{g}}}Sym_{X_{\phi}(a_{\bar{g}}^{(j)})}\circ\prod_{\bar{g}\in G_{odd}}\prod_{i=1}^{kT}Sym_{X_{\bar{g},i}}\circ\prod_{\bar{g}\in G_{even}}\prod_{i=1}^{kT}Alt_{X_{\bar{g},i}}(f_{k}).$$

We also consider a ``forgetful'' operator $F_{G}^{G_{2}}$ which
transforms $G_{2}$-graded polynomials into $G$-graded polynomials
by changing the degree of every variable from $(\epsilon,g)\in G_{2}$
to $g\in G$. We finally have the $G$-graded polynomial
\[
F_{G}^{G_{2}}\left(s_{k;\phi;A}(f)\right).
\]
We remark that for $g\in G$ the variables $F_{G}^{G_{2}}(x_{(0,g),t})$
and $F_{G}^{G_{2}}(x_{(1,g),t})$ are two different variables degree
$g \in G$.
\begin{definition}
Let $B$ be a $G_{2}$-graded algebra. An evaluation of a $G$-graded
polynomial $f$ on $B$ is called \emph{almost $G_{2}$ }if every
variable $x$ of $f$ of degree $g$ is evaluated in some $B_{(\epsilon,g)}$.

Furthermore, if $B_{0}$ is a subset of $B$, we say that an evaluation
$\psi$ of $f$ on $B$ is a \emph{$B_{0}$-evaluation} if every variable
of $f$ is evaluated in $B_{0}$.
\end{definition}

Suppose we have a $G_{2}$-graded polynomial $f$ and we consider
the $G$-polynomial $F_{G}^{G_{2}}(f)$. Then if $\psi$ is an almost
$G_{2}$-evaluation of $F_{G}^{G_{2}}(f)$, there is typically no
reason that $\deg\psi(F_{G}^{G_{2}}(x_{\bar{g}}))=\bar{g}$ (i.e.
the parities might not agree). The next Proposition shows that our
construction of $F_{G}^{G_{2}}\left(s_{k;\phi;A}\right)(f)$
will ensure that ``almost always'' the above equality occurs \textit{given
that} $\psi$ gives a nonzero value to the polynomial.

\begin{proposition}\label{proposition:forcing G evaluation}
Let $B$ be a finite dimensional
$G_{2}$-graded algebra.
If  $$\psi:F\left\langle X_{0};Y_{0}\right\rangle \to E(B)$$
is a nonzero almost $G_{2}$-evaluation, then for every $\bar{g}\in G_{2}$
we have
\[
\deg\psi(F_{G}^{G_{2}}(x_{\bar{g},i}))=\bar{g},
\]
except possibly $(\dim A)^{2}\dim B$ number of $i$'s.

Furthermore, if there is some $\bar{g}_{0}=(\epsilon_{0},g_{0})\in G_{2}$
such that the dimension of $B_{g_{0}}$ is strictly smaller than that
of $A_{g_{0}}$. Then for $k>(\dim A)^{2}\dim B$, the polynomial
$F_{G}^{G_{2}}\left(s_{k;\phi;A}(f)\right)$ is an identity of $E(B)$.
\end{proposition}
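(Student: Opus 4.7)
The plan is to exploit the graded-commutativity of $E$ inside each row and column operation appearing in $s_{k;\phi;A}(f)$ to force parity constraints on any nonzero almost-$G_{2}$-evaluation $\psi$ of $F_{G}^{G_{2}}(s_{k;\phi;A}(f))$ on $E(B)$. First, by multilinearity I reduce to the case where every variable $x_{\bar{g},i}^{(j)}$ is assigned a basis monomial $\omega\otimes b$ with $b$ a homogeneous basis element of $B$; since the image lies in $E(B)$, the parity of $\omega$ in $E$ equals the $\mathbb{Z}_{2}$-parity of $b$, and I call this common parity the \emph{realized parity} of the variable. For $\bar{g}=(\epsilon,g)$ the realized parity may or may not equal $\epsilon$.

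The central computation is this: if two variables receive values $v_{1}=\omega_{1}\otimes b$ and $v_{2}=\omega_{2}\otimes b$ sharing the $B$-part $b$, with $\omega_{1},\omega_{2}$ of the same parity in $E$, then for any intervening element $M\in E(B)$ one has
\[
v_{1}Mv_{2} \;=\; (-1)^{|\omega_{1}|\,|\omega_{2}|}v_{2}Mv_{1},
\]
by sliding $\omega_{1},\omega_{2}$ past each component of $M$ inside $E$ and cancelling the common $B$-contributions. Consequently, if $\omega_{1},\omega_{2}$ are both odd then $v_{1}Mv_{2}+v_{2}Mv_{1}=0$, and if both even then $v_{1}Mv_{2}-v_{2}Mv_{1}=0$. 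I apply this inside the row symmetrization of $X_{\phi}(a_{\bar{g}}^{(j)})$ (for $\bar{g}\in G_{even}$) and the row alternation (for $\bar{g}\in G_{odd}$) by pairing permutations $\tau\leftrightarrow(i_{1}i_{2})\tau$ via left-translation: this involution swaps the two designated values $v_{i_{1}},v_{i_{2}}$ at their current positions while leaving the rest of the monomial fixed, so each pair contribution vanishes whenever the two variables have wrong realized parity and share a $B$-basis element. Pigeonhole on the basis of $B$ then forces each row to contain at most $\dim B$ variables of wrong realized parity; summing over the $\dim A_{\bar{g}}$ rows gives the bound $(\dim A)^{2}\dim B$ on the number of bad indices $i$ per $\bar{g}$, which is the first assertion.

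For the second assertion, the inequality $\dim B_{g_{0}}<\dim A_{g_{0}}$ in the $G$-grading together with $\dim A_{g_{0}}=\sum_{\epsilon}\dim A_{(\epsilon,g_{0})}$ (and the analogous identity for $B$) yields by pigeonhole some $\bar{g}_{0}^{*}=(\epsilon^{*},g_{0})$ with $\dim B_{\bar{g}_{0}^{*}}<\dim A_{\bar{g}_{0}^{*}}$. Since $k>(\dim A)^{2}\dim B$, the first assertion supplies at least one column $X_{\bar{g}_{0}^{*},i_{0}}$ all of whose $\dim A_{\bar{g}_{0}^{*}}$ variables have the correct realized parity $\epsilon^{*}$; their $B$-parts thus lie in $B_{\bar{g}_{0}^{*}}$, and pigeonhole forces two of them to coincide. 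The column alternation (if $\bar{g}_{0}^{*}\in G_{even}$) or column symmetrization (if $\bar{g}_{0}^{*}\in G_{odd}$) then annihilates the evaluation by precisely the same cancellation mechanism, now with the parity roles reversed: for $\bar{g}_{0}^{*}$ even the correct-parity $\omega$'s are even and the column antisymmetrization cancels, while for $\bar{g}_{0}^{*}$ odd the correct-parity $\omega$'s are odd and the column symmetrization cancels. Hence $\psi$ vanishes on the polynomial, contradicting nonzeroness.

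The main obstacle I anticipate is verifying rigorously that the left-translation involution produces a clean pairwise cancellation: one must confirm that it swaps precisely the two chosen values at their permutation-dependent positions while leaving every other value untouched, and that the remaining alternations and symmetrizations, acting on disjoint index sets, commute cleanly with the pairing so that no spurious nonzero contributions arise. A secondary technical point is ensuring the initial reduction to basis-element evaluations respects the almost-$G_{2}$ condition so that realized-parity bookkeeping remains meaningful throughout.
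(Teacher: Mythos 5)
Your proof follows the same route as the paper's: pigeonhole on the table structure combined with a Grassmann parity sign cancellation, using the (outer) row operations to bound the number of wrong-parity columns and then a good column's (inner) alternation/symmetrization to kill the evaluation once two entries share a $B$-basis element. Two remarks. First, the commutativity worry you raise at the end is resolvable, and the paper also leaves it implicit: expand $\psi\bigl(F_{G}^{G_{2}}(s_{k;\phi;A}(f))\bigr)$ as a double sum $\sum_{\rho,\sigma}\eta(\rho)\eta'(\sigma)\,\psi(f_{k}^{\sigma\rho})$ over row permutations $\rho$ and column permutations $\sigma$; for the first claim pair $\rho\leftrightarrow\kappa\rho$, and for the second pair $\sigma\leftrightarrow\kappa\sigma$ for each fixed $\rho$, where $\kappa$ is the transposition of the two offending variables. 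Since swapping the two values $\omega_{1}\otimes b$ and $\omega_{2}\otimes b$ multiplies every monomial of $f_{k}$ by the same factor $(-1)^{|\omega_{1}||\omega_{2}|}$ irrespective of the other permutations, each pair cancels, and no commutation between row and column operators is needed. Second, the paper's proof immediately invokes the $G_{2}$-graded inequality $\dim B_{\bar{g}_{0}}<\dim A_{\bar{g}_{0}}$, so your extra pigeonhole step from $\dim B_{g_{0}}<\dim A_{g_{0}}$ to some $\bar{g}_{0}^{*}$ is harmless and arguably repairs a notational slip in the statement.
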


\begin{proof}
We focus on proving the ``furthermore'' part and along the way we
get a proof for the main claim. In order to show that $F_{G}^{G_{2}}\left(s_{k;\phi;A}(f)\right)$
is an identity of $E(B)$, it is enough to show that it is $0$ under
any almost $G_{2}$-evaluation of $F_{G}^{G_{2}}\left(s_{k;\phi;A}(f)\right)$,
since this polynomial is multilinear. Let $\psi:F\left\langle X_{0};Y_{0}\right\rangle \to E(B)$
be an almost $G_{2}$-evaluation of $F_{G}^{G_{2}}\left(s_{k;\phi;A}(f)\right)$.

Suppose that $\psi\left(F_{G}^{G_{2}}\left(s_{k;\phi;A}(f)\right)\right)\neq0$.
Then, there is some
$$\sigma\in\prod_{\bar{g}\in G_{odd}}\prod_{i=1}^{kT}S_{X_{\bar{g},i}}\cdot\prod_{\bar{g}\in G_{even}}\prod_{i=1}^{kT}S_{X_{\bar{g},i}}$$
such that, under $\psi$, the polynomial
\[
F_{G}^{G_{2}}\left(\prod_{\bar{g}\in G_{odd}}\prod_{j=1}^{\dim A_{\bar{g}}}Alt_{X_{\phi}(a_{\bar{g}}^{(j)})}\circ\prod_{\bar{g}\in G_{even}}\prod_{j=1}^{\dim A_{\bar{g}}}Sym_{X_{\phi}(a_{\bar{g}}^{(j)})}(f_{k}(\sigma(X));Y)\right)\neq0.
\]
Notice that for all $i$, the set $X_{\bar{g},i}$ stays the same
even after applying $\sigma$.

We claim that all \textit{small sets} $F_{G}^{G_{2}}\left(X_{\bar{g}_{0},i}\right)$,
except possibly $(\dim A)^{2}\dim B$ of them, have all of their variables
assigned to elements of degree $\bar{g}_{0}$. Indeed, we only need
to show that the parity is $\epsilon_{0}$. Assume that $\epsilon_{0}=0$
(the proof for $\epsilon_{0}=1$ is similar). If, on the contrary,
there are more than $(\dim A)^{2}$ small sets $F_{G}^{G_{2}}(X_{\bar{g}_{0},i})$
having, at least, one variable who has an odd evaluation; then, as
$k>(\dim A)^{2}\dim B\geq\dim A_{\bar{g}_{0}}\dim A_{odd}\dim B_{(1,g_{0})}$
and Remark \ref{remark:table structure} holds, there is some $l_{0}\in\{1,\dots,\dim A_{\bar{g}_{0}}\}$
such that at least $\dim B_{(1,g_{0})}$ distinct variables from $F_{G}^{G_{2}}\left(X_{\phi}(a_{\bar{g}_{0}}^{(l)})\right)$
are assigned by $\psi$ values from $B_{(1,g_{0})}\otimes E_{1}$.
However as we symmetrize that set, we must get $0$ - a contradiction.
Notice that we have also proved here the main claim.

Denote by $F_{G}^{G_{2}}(X_{\bar{g}_{0},i_{0}})$ a small set with
the property from the previous paragraph. Since $\dim B_{\bar{g}_{0}}<\dim A_{\bar{g}_{0}}$,
the alternation (symmetrization) of size $\dim A_{\bar{g}_{0}}$ must
nullify the polynomial.
\end{proof}
We are now ready to prove Theorem \ref{theorem:different G2 simples different envelopes}:
\begin{proof}[Proof of Theorem \ref{theorem:different G2 simples different envelopes}.]
By \cite{AljHaile}, $A$ and $B$ are $G_{2}$-isomorphic iff
$A$ and $B$ share the same $G_{2}$-identities (see also \cite{BahturinYasumura} for a far reaching generalization of the statement in \cite{AljHaile}). As a result, it
is enough to show that if $A$ and $B$ are not $G_{2}$-PI-equivalent,
then $E(A)$ and $E(B)$ are not $G$-PI-equivalent.

Assume WLOG that there is a multilinear $G_{2}$-polynomial $p(x_{\bar{g}_{1},1},\dots,x_{\bar{g}_{n},n})$
which is an identity of $B$ but not of $A$. We consider the $G_{2}$-graded
basis $\mathcal{B}_{A}=\{a_{\bar{g}}^{(j)}:\bar{g}\in G_{2},j=1,\dots,\dim_{F}A_{\bar{g}}\}$
of $A$ as in \cite{AljHaile} Theorem $1.1$. Let $\phi$ be a nonzero $\mathcal{B}_{A}$-evaluation
of $p$. We may also assume that $\phi(p)=\delta$, where $\delta$
is a nonzero idempotent of $A$. In the next few paragraphs
we are going to construct a $G_{2}$-graded polynomial $f$ from $p$
on which we will perform the construction from the beginning of the
section to obtain a polynomial $F_{G}^{G_{2}}\left(s_{k;\phi;A}(f)\right)$
which will be an identity of $E(B)$ and a nonidentity of $E(A)$.

For $i=1,\dots,n$ let $X(i)=\{x_{\bar{g},i}^{(j)}:\bar{g}\in G_{2},j=1,\dots,\dim A_{\bar{g}}\}$
be disjoint variables from the ones of $p$ and set $\phi(x_{\bar{g},i}^{(j)})=a_{\bar{g}}^{(j)}$.
For every $i$ let $j(i)$ be such that $\phi(x_{\bar{g}_{i},i})=a_{\bar{g_{i}}}^{(j(i))}$.
We identify $x_{\bar{g}_{i},i}$ with $x_{\bar{g}_{i},i}^{(j(i))}$
for every $i$. We set $X_{0}=\coprod_{i=1}^{n}X(i)$.

Similarly to the construction in the proof of Theorem \ref{isomorphic graded semisimple algebras}, one can construct a multilinear
$G_{2}$-monomial $M=M(X_{0};Y)$ with the property that there is
an evaluation $\phi_{Y}$ of the $Y$-variables such that the only
extension of $\phi_{Y}$ to a nonzero $\mathcal{B}_{A}$-evaluation
$\phi_{Z}$ of $M(X_{0};Y)$ must satisfy $\phi_{M}|_{X_{0}}=\phi|_{X_{0}}$(i.e.
$\phi_{M}$ also extends $\phi$) and if $\phi_{M}$ satisfies $\phi_{M}|_{X_{0}}=\phi|_{X_{0}}$
then $\phi_{M}(M)=\delta$. We continue to denote the unique nonzero
evaluation $\phi_{M}$ of $M$ by $\phi$. Furthermore, one can also
arrange that $\phi(M)=\delta$.

Clearly, $\phi(M\cdot p)=\delta$. However, $M\cdot p$ is not multilinear,
and so we make small changes to solve this issue. Consider a new set
of variables $z_{\bar{g}_{1},1},\dots,z_{\bar{g}_{n},n}$ and replace
in $Z$ (only) the variables $z_{\bar{g}_{i},i}$ by $x_{\bar{g}_{i},i}$
for every $i$ and call the new polynomial $M^{\prime}$. Clearly
$M^{\prime}\cdot p$ is multilinear. We extend $\phi$ to include
all the $z$-variables by declaring $\phi(z_{\bar{g}_{i},i})=\phi(x_{\bar{g}_{i},i})$
so that $\phi(Mp)=\phi(M^{\prime}p)=\delta$.

Finally let
\[
f=M^{\prime}\cdot p^{*},
\]
where $*$ is the Grassmann star operation.

We claim that $f$ satisfies properties (1) - (2): By construction
property (2) holds. Hence we are left with verifying property (1).
Indeed, any nontrivial permutation of any of the variables in some
$X(i)$ induces a new evaluation of $f$, which we call $\phi^{\prime}$,
that differs from $\phi$ only on the set $X(i)$. By the construction
of $M$ (and $M^{\prime}$) we get that $\phi^{\prime}(M^{\prime})=0$;
showing property (1).

We now consider our final polynomial $F_{G}^{G_{2}}\left(s_{k;\phi;A}(f)\right)$,
where $k=n\cdot\dim B\cdot(\dim A)^{2}+1$. Notice that it is a $G$-polynomial
and that the construction also extends $\phi$ to an evaluation of
all of $s_{k;\phi;A}(f)$ (a $G_{2}$-graded evaluation!). We claim
that it is an identity of $E(B)$ but not of $E(A)$. It is not an
identity of $E(A)$ since we can consider the following $G$-evaluation
$\psi$ in $E(A)$: for every variable $v$ appearing in $s_{k;\phi;A}(f)$
we set
\[
\psi(F_{G}^{G_{2}}(v))=\phi(v)\otimes w_{v},
\]
where $w_{v}\in E_{\deg v}$ and all $w_{v}$'s are chosen so that
the product of all of them is nonzero. By the definition of $*$,
we have that $\psi\left(F_{G}^{G_{2}}(p^{*})\right)=\delta\otimes\prod_{v\in p}w_{v}$
and so $\psi\left(F_{G}^{G_{2}}(f_{k})\right)=\delta\otimes\prod_{v\in f_{k}}w_{v}$.
By property (1) of $f$ we conclude that
\[
\psi\left(F_{G}^{G_{2}}\left(\prod_{\bar{g}\in G_{odd}}\prod_{i=1}^{kn}Sym_{X_{g,i}}\circ\prod_{\bar{g}\in G_{even}}\prod_{i=1}^{kn}Alt_{X_{\bar{g},i}}(f_{k})\right)\right)=\psi\left(F_{G}^{G_{2}}(f_{k})\right)=\delta\otimes\prod_{v\in f_{k}}w_{v}.
\]
Finally, since $\phi$ gives the same value $a_{\bar{g}}^{(i)}$ for
every variable in $X_{\phi}(a_{\bar{g}}^{(i)})$, we have that
\[
\psi\left(F_{G}^{G_{2}}\left(s_{k;\phi;A}(f)\right)\right)=C\cdot\delta\otimes\prod_{v\in f_{k}}w_{v}\neq0,
\]
where $C=\prod_{\bar{g}\in G_{2}}\prod_{i=1}^{\dim A_{\bar{g}}}\left|X_{\phi}(a_{\bar{g}}^{(i)})\right|!=((kn)!)^{\dim A}$.

We are left with showing that $F_{G}^{G_{2}}\left(s_{k;\phi;A}(f)\right)$
is an identity of $E(B)$. Suppose that $F_{G}^{G_{2}}\left(s_{k;\phi;A}(f)\right)$
is a nonidentity of $E(B)$. Hence there is a nonzero almost $G_{2}$-evaluation $\psi$ on $E(B)$. As $\psi\left(F_{G}^{G_{2}}\left(s_{k;\phi;A}(f)\right)\right)\neq0$,
there are two permutations
\[
\sigma\in\prod_{\bar{g}\in G_{2}}\prod_{i=1}^{\dim A_{\bar{g}}}S_{X_{\phi}(a_{\bar{g}}^{(i)})},\tau\in\prod_{\bar{g}\in G_{2}}\prod_{i=1}^{kn}S_{X_{\bar{g},i}}
\]
such that
\[
\psi\left(F_{G}^{G_{2}}\left(f_{k}(\sigma\tau(X),Y,Z)\right)\right)\neq0.
\]
 Clearly, $\sigma\tau(X_{i})=\sigma(X_{i})$ and $\sigma$ preserves
the $G_{2}$-degree. By Proposition \ref{proposition:forcing G evaluation}
and the choice of $k$, there is some $i_{0}\in\{1,\dots,k\}$ such
that for every $x_{\bar{g}}\in\sigma(X_{i_{0}})$ we have that $\deg\psi(x_{\bar{g}})=\bar{g}$.
As a result, as $p$ is an identity of $B$, we can deduce that $\psi\left(p^{*}(\sigma(X_{i_{0}}))\right)=0$
and so also $\psi\left(f(\sigma(X_{i_{0}}),Y,Z)\right)=0$. This clearly
forces that $\psi\left(F_{G}^{G_{2}}\left(f_{k}(\sigma\tau(X),Y,Z)\right)\right)=0$,
hence reaching a contradiction.
\end{proof}

We may extend Theorem to full $G_{2} = \mathbb{Z}_{2} \times G$-graded algebras.

\begin{theorem}
Let $A$ and $B$ be finite dimensional $G_{2}$-graded algebras over $F$. Suppose $A$ and $B$ are full. If  $E(A)$ and $E(B)$ are $G$-graded PI-equivalent then the semisimple parts $A_{ss}$ and $B_{ss}$ are isomorphic as $G_{2}$-algebras.
\end{theorem}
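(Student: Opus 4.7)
The plan is to combine the covering and strongly-full polynomial machinery of Section \ref{nonaffine algebras section} with the $e$-central distinguishing polynomials from the proof of Theorem \ref{isomorphic graded semisimple algebras} and the forgetful construction $F_{G}^{G_{2}}$ of Section 5. First I would define a covering relation for finite dimensional $G_{2}$-graded semisimple algebras: $V$ covers $U$ if the $G_{2}$-graded simple components of $U$ admit a partition into blocks assigned to the components $V_j$ of $V$ such that, for every $\bar{g} \in G_{2}$, the sum of $\dim_F(\cdot)_{\bar{g}}$ over each block is bounded by $\dim_F(V_j)_{\bar{g}}$. As in previous cases, $G$-graded PI-equivalence of $E(A)$ and $E(B)$ should force mutual covering, matching the tuples of homogeneous dimensions of the $G_{2}$-graded simple components of $A_{ss}$ and $B_{ss}$.

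Next, for a full $G_{2}$-graded algebra $A$ with Wedderburn--Malcev decomposition $A \cong A_{1} \times \cdots \times A_{q} \oplus J_{A}$, I would construct a $G$-graded nonidentity $p_{A}$ of $E(A)$ of the form $F_{G}^{G_{2}}(s_{k;\phi;A}(f))$, where $f$ is modelled on a long monomial
$$Z_{1,1} \cdots Z_{1,k}\, w_{1}\, Z_{2,1}\cdots Z_{2,k}\, w_{2} \cdots w_{q-1}\, Z_{q,1} \cdots Z_{q,k}$$
with each $Z_{i,l}$ a complete bordered product of a $G_{2}$-homogeneous basis of $A_i$, and the $w_j$ radical bridges witnessing the fullness of $A$. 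The alternations and symmetrizations on the designated variables are arranged exactly as in Section \ref{nonaffine algebras section}, adapted to the parity of each $\bar{g} \in G_{2}$. Into each block $Z_{i,l}$ I would insert, with disjoint auxiliary variables, the $e$-central polynomial $m_i(X_{G_{2}})$ built as in the proof of Theorem \ref{isomorphic graded semisimple algebras}: $m_i$ is $e$-central on $A_i$ and an identity of every $G_{2}$-graded simple algebra $\Sigma$ with the same tuple of homogeneous dimensions as $A_i$ but with $Id_{G_{2}}(A_i) \nsupseteq Id_{G_{2}}(\Sigma)$. Finiteness of such $\Sigma$ (up to $G_{2}$-graded isomorphism) makes this product well-defined, and Theorem \ref{theorem:different G2 simples different envelopes} is invoked at this stage to turn each $G_{2}$-identity witnessing the inequality into an ungraded nonidentity behavior for $E(\Sigma)$.

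I would then prove the analogue of the Main Lemma of Section \ref{nonaffine algebras section}: for $k$ sufficiently large, $p_{A}$ vanishes on $E(B)$ whenever $B$ does not cover $A$, or whenever no matching $\sigma$ of the $G_{2}$-graded simple components respects the relevant $Id_{G_{2}}$-inclusions. The forcing proceeds in two stages. First, Proposition \ref{proposition:forcing G evaluation} applied segment by segment to the small sets shows that any almost $G_{2}$-evaluation producing a nonzero value must have parities agreeing with those prescribed by $A_i$; combined with the alternation pattern this yields, as in Lemma \ref{principal lemma}, a permutation $\sigma \in S_q$ with $\dim_F(A_i)_{\bar{g}} = \dim_F(B_{\sigma(i)})_{\bar{g}}$ for every $\bar{g}$. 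Second, non-vanishing of the inserted $F_{G}^{G_{2}}(m_i)$ on the matched component forces $Id_{G_{2}}(A_i) \supseteq Id_{G_{2}}(B_{\sigma(i)})$. Applying the argument symmetrically and running the orbit trick from Theorem \ref{isomorphic graded semisimple algebras} yields $Id_{G_{2}}(A_i) = Id_{G_{2}}(B_{\sigma(i)})$ for each $i$, whence $A_i \cong B_{\sigma(i)}$ as $G_{2}$-graded algebras by \cite{AljHaile}, and therefore $A_{ss} \cong B_{ss}$ as $G_{2}$-algebras.

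The hard part will be merging the three layers coherently: the alternation/symmetrization pattern needed to force dimension covering, the forgetful operator $F_{G}^{G_{2}}$ together with Proposition \ref{proposition:forcing G evaluation} that reinstates parity matching on surviving monomials, and the insertion of the $G_{2}$-graded $e$-central polynomials $m_i$ without destroying either of the previous forcing mechanisms. The crux is that $m_i$ is $e$-central, so after $F_{G}^{G_{2}}$ it remains central of $G$-degree $e$ on $E(A_i)$ and neither alters the small-set alternation pattern nor interferes with the almost-$G_{2}$ analysis; verifying that $F_{G}^{G_{2}}(m_i)$ genuinely vanishes on $E(\Sigma)$ for every dimension-matched $\Sigma$ not $G_{2}$-PI-equivalent to $A_i$ is precisely what Theorem \ref{theorem:different G2 simples different envelopes} supplies, which is why the enveloping framework of the present section is exactly the tool needed to reach the conclusion.
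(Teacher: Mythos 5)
Your overall architecture tracks the paper's proof closely: establish a $G_{2}$-covering relation, use a long bordered monomial with small-set alternation/symmetrization (tensored with Grassmann elements and pushed through $F_{G}^{G_{2}}$) to force dimension matching, and then insert component-distinguishing polynomials into each segment $Z_{i,l}$ to upgrade dimension matching to $G_{2}$-isomorphism. That is exactly the paper's shape. The gap is in the choice of distinguishing polynomial to insert.

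You propose to insert $F_{G}^{G_{2}}(m_{i})$, where $m_{i}(X_{G_{2}})$ is the $G_{2}$-graded $e$-central polynomial modelled on the construction in the proof of Theorem \ref{isomorphic graded semisimple algebras}, and you assert that ``verifying that $F_{G}^{G_{2}}(m_i)$ genuinely vanishes on $E(\Sigma)$ \ldots is precisely what Theorem \ref{theorem:different G2 simples different envelopes} supplies.'' It does not. The forgetful operator applied to a $G_{2}$-graded identity of $\Sigma$ is not in general a $G$-graded identity of $E(\Sigma)$: that failure is exactly the obstruction highlighted at the start of Section 5 (``The challenge here lies in transforming a super identity of $A$ into an ordinary identity of $E(A)$''), and it is precisely why Theorem \ref{theorem:different G2 simples different envelopes} requires the elaborate $s_{k;\phi;A}$ machinery, the Grassmann star $p^{*}$, and Proposition \ref{proposition:forcing G evaluation}. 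In your construction the auxiliary variables of $m_{i}$ are disjoint from the designated variables and thus are not subject to the alternation/symmetrization pattern, so Proposition \ref{proposition:forcing G evaluation} gives you no control over the parity of their almost-$G_{2}$ evaluations in $E(\Sigma)$; consequently $F_{G}^{G_{2}}(m_{i})$ has no reason to vanish there. The paper instead takes, from Theorem \ref{theorem:different G2 simples different envelopes} and its proof, the $G$-graded polynomials $p_{j,j'}$ (of the form $F_{G}^{G_{2}}(s_{k;\phi;A}(M' p^{*}))$) which are already certified to be nonidentities of $E(A_{j})$ and identities of $E(B_{j'})$ whenever $A_{j} \not\cong_{G_{2}} B_{j'}$, normalizes them to take the idempotent value $\delta$, forms $p_{i} = \prod_{j'} p_{i,j'}$, and inserts those. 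Replacing your $F_{G}^{G_{2}}(m_{i})$ by those $p_{i}$ repairs the argument; as written, the step is unjustified.
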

\begin{proof}
For the proof we shall combine the constructions in Section \ref{nonaffine algebras section} and Section \ref{Section G-graded algebras}, that is for nonaffine ungraded algebras and for affine $G$-graded algebras, together with the Theorem \ref{theorem:different G2 simples different envelopes}. For each $G_{2}$-graded simple algebra $A_{i}$ we let $\mathcal{B}_{A_i}$ be a basis of $A_{i}$ whose elements are $G_{2}$-homogeneous of the form $\{u_{h} \otimes e_{r,s}\}$. Let $K_{i}$ denote a nonzero product of the elements in $\mathcal{B}_{A_i}$.
We refer to these elements as \textit{designated elements}. Each basis element is bordered by basis elements where for convenience we may assume all but possibly one are of the form $1\otimes e_{i,j}$. As usual we refer to these as \textit{frame elements}. We may use one of the frame elements so the value of the monomial is an idempotent $\delta$ of $A$. We denote this product by $Z_{i}$. We let $Z_{i,j}$, $j =1,\ldots, k$ be a duplicate of the monomial $Z_{i}$ and let $\overline{Z}_{i, k} = Z_{i, 1}\cdot Z_{i,2} \cdots Z_{i,k}$. Here, $k$ is a large integer which needs to be determined. We let $\Theta_l = (a,\ldots,a)$ be the $k$-tuple where $a$ is the $l$th element appearing in the monomial $K_{i}$.
 Since the algebra $A$ is full, we have up to ordering of the $G_{2}$-graded simple components of $A$ a nonvanishing product $\overline{Z}_{1, k} \cdot w_{1} \cdot \overline{Z}_{2, k} \cdots w_{q-1} \cdot \overline{Z}_{q, k} \neq 0$.
For every $\bar{g} \in G_{2}$ we consider $k$ \textit{small sets }, each consisting of $dim_{F}(A_{ss})_{\bar{g}}$ designated elements where the $j$th small set consists of the designated elements in $K_{1, j}, \ldots, K_{q, j}$. We have as in previous cases that any nontrivial permutation on a small set leads to a zero product. Our next step is to tensor even elements with the identity of $E$, the Grassmann algebra, and odd elements with different generators of $E$. Note that the product remains nonzero. As in previous cases we will view the elements obtained as $G$-graded elements but for convenience we will still refer to them using the adjective even or odd. Moreover we shall refer as small sets, a set of the form $(1\otimes a_1,\ldots, 1\otimes a_{m})$ where  $(a_1,\ldots, a_{m})$ is a small set of even homogeneous elements of degree $(0,g)$, $g \in G$ or a set the form $(\epsilon_{1}\otimes b_1,\ldots, \epsilon_{m}\otimes b_{m})$ where  $(b_1,\ldots, b_{m})$  is a small set of odd homogeneous elements of degree $(1,g)$, $g \in G$. By abuse of notation we keep the notation $\Theta_l$ after multiplying the basis elements with Grassmann generators.

Next we alternate and symmetrize small sets of even and odd elements respectively. Then we symmetrize sets $\Theta_l = (a,\ldots,a)$ where $a$ is even and alternate sets $\Theta_t = (b, \ldots, b)$ where $b$ is odd. One shows the product is nonzero.

Next we replace the the designated elements by $X$ variables, the frames by $Y$'s and the bridges by $W$'s where we forget the $\mathbb{Z}_{2}$-degree, that is $X, Y, W$ are $G$-graded variables. Clearly by construction we have a nonidentity $f$ of $A$. Let us denote the nonzero evaluation above by $\phi$. As in previous cases with such polynomial one shows that if $B_{ss}$  does not cover $A_{ss}$ as $G_{2}$-algebras then $f$ is a nonidentity of $E(A)$ and an identity of $E(B)$ as a $G$-graded algebras. Thus, since we are assuming $E(A)$ and $E(B)$ are $G$-graded PI-equivalent we have that $A$ and $B$ cover each other as $G_{2}$-graded algebras. We conclude that up to permutation of the simple components of $B$ we have $A \cong A_{1} \times \ldots \times A_{q} \oplus J_{A}$ and $B \cong B_{1} \times \ldots \times B_{q} \oplus J_{B}$ where $dim_{F}(A_{j})_{g} = dim_{F}(B_{j})_{g}$, $g \in G_{2}$.
We want to prove there is a permutation on the $G_{2}$-graded simple components of $B$ such that $A_{j} \cong B_{j}$ as $G_{2}$-graded algebras.

Recall from the Theorem \ref{theorem:different G2 simples different envelopes} above that if $dim_{F}(A_{j})_{g} = dim_{F}(B_{j'})_{g}$, all $g \in G_{2}$, for some $j$ and $j'$, there exists a $G$-polynomial $p_{j, j'}$ which is a $G$-graded nonidentity of $E(A_{j})$ and an identity of $E(B_{j'})$ unless $A_{j}$ and $B_{j'}$ are $G_{2}$-graded isomorphic. Moreover, we may assume the value of the polynomial $p_{j,j'}$ is the idempotent $\delta$ of $A$ we fixed above. Denote by $p_{i} = \prod_{j'} p_{i,j'}$. We note that $p_{i}$ is a $G$-polynomial nonidentity of $E(A_{i})$ and an identity of $E(B_{j'})$ for every $G_{2}$-graded simple algebra whose dimension of the homogeneous $G_{2}$-components are equal to the corresponding dimensions of the homogeneous components of $A_{j}$ but is not isomorphic to $A_{j}$. Finally, we insert to the right of every monomial $Z_{i,l}$ a copy of the polynomial $p_{i}$ with disjoint variables. The polynomial obtained $m_{A}$ is a $G$-graded nonidentity of $E(A)$. By assumption it is a nonidentity of $E(B)$, which forces the existence of a permutation on the $G_{2}$-graded simple components of $B$ such that $A_{j} \cong B_{j}$ as $G_{2}$-graded algebras. This completes the proof.
\end{proof}

We can now complete the proof of Theorem \ref{uniquely determined graded nonaffine} as in the proof of Theorem \ref{uniquely determined affine}, that is by performing Steps $0-4$ on the set of finite dimensional $G_{2}$-graded algebras $A$ with $Id_{G}(E(A)) = \Gamma$ (see Section \ref{Section preliminaries and proof of the affine case}). Details are omitted.


\begin{thebibliography}{99}

\bibitem{AGPR} E. Aljadeff, A. Giambruno, C. Procesi and A. Regev {\em Rings with Polynomial Identities and Finite Dimensional Representations of Algebras}, American Mathematical Society Colloquium Publications, 66. American Mathematical Society, Providence, RI, 2020. x+630 pp. Electronic ISBN: 978-1-4704-5695-5

\bibitem{AljHaile} E. Aljadeff and D. Haile, {\em Simple $G$-graded algebras and their polynomial
identities},  Trans. Amer. Math. Soc. 366 (2014), no. 4, 1749--1771.

\bibitem{AB} E. Aljadeff and A. Kanel-Belov, {\em Representability and Specht problem for $G$-graded
algebras}, Adv. in Math. {\bf 225} (2010), pp. 2391--2428.

\bibitem{AljBelovKar} E. Aljadeff, A. Kanel-Belov and Y. Karasik, {\em Kemer's theorem for affine PI-algebras over a field of characteristic zero}, J. Pure Appl. Algebra 220 (2016), no. 8, 2771--2808.

\bibitem{AljKarsikAzumaya}  E. Aljadeff and Y. Karasik, {\em On generic G-graded Azumaya algebras}, arXiv:2008.00976.

\bibitem{BSZ} Y. A. Bahturin, S. K. Sehgal and M. V. Zaicev, {\em Finite-dimensional
simple graded algebras},  Sb. Math. {\bf 199} (2008), no. 7,
965–-983.

\bibitem{BahturinYasumura} Y. Bahturin, Yuri and F. Yasumura, {\em Graded polynomial identities as identities of universal algebras}, Linear Algebra Appl. 562 (2019), 1--14.

\bibitem{BELOV1} A. Kanel-Belov {\em Local finite basis property and local representability of varieties of associative
rings}, (Russian. Russian summary)
Izv. Ross. Akad. Nauk Ser. Mat. 74 (2010), no. 1, 3–134; translation in Izv. Math. 74 (2010), no. 1, 1--126.

\bibitem{BelROWVISH1} A. Kanel-Belov, L. H. Rowen and U. Vishne {\em Structure of Zariski-closed algebras}, Trans. Amer. Math. Soc. 362 (2010), no. 9, 4695–4734.

\bibitem{BelROWVISH2} A. Kanel-Belov, L. H. Rowen and U. Vishne {\em Application of full quivers of representations of algebras, to polynomial identities}, Comm. Algebra 39 (2011), no. 12, 4536–4551.

\bibitem{BelROWVISH3} A. Kanel-Belov, L. H. Rowen and U. Vishne {\em Full quivers of representations of algebras}, Trans. Amer. Math. Soc. 364 (2012), no. 10, 5525--5569.

\bibitem{DavidOfir} O. David, {\em Graded embeddings of finite dimensional simple graded algebras}, J. Algebra 367 (2012), 120--141.

\bibitem{GiamZaicevBook} A. Giambruno and M. Zaicev, {\em Polynomial identities and asymptotic methods}, Mathematical Surveys and Monographs, 122. American Mathematical Society, Providence, RI, 2005. xiv+352 pp.

\bibitem{KarasikGcentral} Y. Karasik, {\em $G$-graded central polynomials and G-graded Posner’s theorem}, Transactions of
the American Mathematical Society 372.8 (2019): 5531--5546.

\bibitem{Procesi} C. Procesi, {\em The geometry of polynomial identities}, (Russian) Izv. Ross. Akad. Nauk Ser. Mat. 80 (2016), no. 5, 103--152; translation in Izv. Math. 80 (2016), no. 5, 910–953.

\end{thebibliography}
\end{document}